\documentclass[pdflatex,sn-mathphys-num]{sn-jnl}
\usepackage{amsmath,amssymb,amsthm,mathtools}
\usepackage{graphicx}
\usepackage{xcolor}
\usepackage{placeins}
\usepackage{microtype}
\usepackage{xurl}
\providecommand{\burl}[1]{\url{#1}}
\hypersetup{hidelinks}
\numberwithin{equation}{section}
\theoremstyle{thmstyleone}
\newtheorem{theorem}{Theorem}[section]
\newtheorem{lemma}[theorem]{Lemma}
\newtheorem{proposition}[theorem]{Proposition}
\newtheorem{corollary}[theorem]{Corollary}
\theoremstyle{thmstyletwo}
\newtheorem{remark}{Remark}[section]
\allowdisplaybreaks
\begin{document}

\title{Uniform-in-time approximation and convergence of invariant measures
for the damped stochastic Korteweg--de Vries equation}
\author[1]{\fnm{Junjie} \sur{Li}}\email{652023210010@smail.nju.edu.cn}
\author*[1]{\fnm{Chun} \sur{Li}}\email{lichun@nju.edu.cn}

\author[2]{\fnm{Tau} \sur{Zhou}} \email{zt@hunnu.edu.cn}
\author[1]{\fnm{Zhao} \sur{Wang}}\email{zhaowang@smail.nju.edu.cn}
\affil[1]{\orgdiv{School of Mathematics}, \orgname{Nanjing University},
\orgaddress{\city{Nanjing}, \postcode{210093}, \country{P. R. China}}}

\affil[2]{\orgdiv{Key Laboratory of Computing and Stochastic Mathematics (Ministry of Education), School of Mathematics and Statistics}, \orgname{Hunan Normal University},
	\orgaddress{\city{Changsha}, \postcode{410081}, \country{P. R. China}}}

\abstract{To quantitatively characterize the long-time dynamics of the periodic damped
stochastic Korteweg--de Vries (sKdV) equation driven by additive noise, we
investigate the uniform-in-time error estimates for a Lie--Trotter
operator splitting approximation. This splitting combines the exact
deterministic KdV flow with the exact Ornstein--Uhlenbeck solution map for
linear damping and additive forcing. Long-time error analysis for the sKdV
equation is highly nontrivial because of the interplay among the lack of
dissipative smoothing, the loss of one spatial derivative arising from the
Burgers-type nonlinearity, and the fluctuations of stochastic forcing. To
overcome these difficulties, we develop new strategies based on exponential
Lyapunov estimates, the continuous dependence estimate, and the local error
decomposition. We establish strong and weak order \(1\)
convergence uniformly in time under sufficiently large damping, as well as
strong order \(1\) convergence on finite time intervals under a weaker
damping condition. Under stronger assumptions, the splitting approximation
admits a unique invariant probability measure on \(H_0^2\), which approximates
the invariant measure of the exact dynamics with order \(1\). To the best of our knowledge, these constitute the first
uniform-in-time strong error and quantitative invariant-measure
approximation results for the periodic damped sKdV equation with additive
noise. Our results provide an avenue for long-time simulation of the damped sKdV equation.}

\keywords{damped stochastic Korteweg--de Vries equation, operator splitting, uniform-in-time approximation, modified invariants, invariant measures, Wasserstein distance}
\pacs[MSC (2020)]{60H35, 60H15, 37L40, 65C30, 35Q53}
\maketitle

\section{Introduction}
\label{sec:introduction}

Let \((\Omega,\mathcal F,(\mathcal F_t)_{t\geq0},\mathbb P)\) be a filtered
probability space satisfying the usual conditions. On this probability space
we consider the periodic damped stochastic Korteweg--de Vries (sKdV) equation
\begin{equation}
 \mathrm du+\left(\partial_x^3u+\frac12\partial_x(u^2)+\lambda u\right)\mathrm dt
 =\Phi\,\mathrm dW,
 \qquad
 x\in\mathbb T=\mathbb R/(2\pi\mathbb Z),
 \qquad \lambda>0.
 \label{eq:intro-skdv}
\end{equation}
Here \(W\) is a cylindrical Wiener process on a separable Hilbert space
\(\mathfrak U\), and \(\Phi\) is a deterministic Hilbert--Schmidt operator
with values in a periodic Sobolev space.
Understanding long-time dynamics is a central problem in the study of
stochastic evolution equations. For the damped sKdV equation, several important qualitative
properties of its long-time behavior have been established~\cite{EkrenKukavicaZiane2018,GHMR2024}, while obtaining
a reliable quantitative characterization remains challenging. Such a
quantitative study requires approximations whose errors can be
controlled over arbitrarily long times. We therefore study uniform-in-time
approximation of \eqref{eq:intro-skdv} and quantitative approximation of its
invariant measure. To construct such an approximation, we employ operator
splitting, which approximates an evolution problem by composing the solution
maps of simpler subproblems; see, e.g., \cite{Holden2010}. Accordingly, we
separate the deterministic KdV dynamics from the linear damping and
stochastic forcing.

Letting \(\mathcal K_h\) denote the exact periodic KdV flow over one time
step, we consider the KdV--Ornstein--Uhlenbeck (KdV--OU) Lie--Trotter
approximation
\begin{equation}
 U^0=u_0,\qquad
 U^{n+1}
 =
 e^{-\lambda h}\mathcal K_hU^n
 +
 \int_{t_n}^{t_{n+1}}
 e^{-\lambda(t_{n+1}-r)}\Phi\,\mathrm dW(r),
 \qquad t_n=nh.
 \label{eq:intro-scheme}
\end{equation}
Each time step composes the exact deterministic KdV flow with the exact
Ornstein--Uhlenbeck solution map for the linear damping and the additive
forcing. By solving both steps exactly, we isolate the splitting error in
time and study its effect on the long-time approximation of solutions and
invariant measures. The resulting error estimates provide an avenue for
long-time simulation of the damped sKdV equation when suitable numerical
realizations are used for the splitting substeps. For deterministic KdV,
the classical Airy--Burgers splitting has been shown to converge on finite
time intervals for sufficiently regular data
\cite{HoldenLubichRisebro2013,HKRT2011}. In the stochastic setting considered
here, however, this decomposition encounters a structural obstruction:
under a nondegeneracy condition on the noise, we show that the classical
Airy--Burgers splitting almost surely breaks down after finitely many steps.
This obstruction is one reason for retaining the globally defined KdV flow
in \eqref{eq:intro-scheme}, rather than separating the Airy part from the
nonlinearity.

In the error analysis of \eqref{eq:intro-scheme}, the main difficulty is to
control the propagation of local splitting errors uniformly in time. The
Airy group preserves Sobolev norms and provides no dissipative smoothing,
while the nonlinear transport term carries one spatial derivative.
Consequently, the continuous dependence estimate for the KdV flow involves a
growth factor depending on higher Sobolev norms of the solution. To bound
this growth over arbitrarily many steps, we use the coercive modifications
of the KdV invariants introduced in~\cite{GHMR2024}. The KdV step preserves
these modified invariants exactly, while the damping and additive forcing
enter only through the Ornstein--Uhlenbeck step. Accordingly, the modified
invariants evolve only during the Ornstein--Uhlenbeck step, and the
splitting approximation satisfies the same Lyapunov estimates as the exact
dynamics. Polynomial moment bounds give the Sobolev norm estimates needed
in the local error analysis, while exponential Lyapunov bounds, together
with the continuous dependence estimate and the linear damping, control the
accumulation of these growth factors over long times. Each local splitting
error is further decomposed into its conditional mean, whose
\(L^2(\mathbb T)\)-norm is \(O(h^2)\), and a zero-mean part of
root-mean-square size \(O(h^{3/2})\). These ingredients yield the
exponentially weighted mean-square estimate stated below.

Assume that \(u_0\in H^6_0(\mathbb T)\) is deterministic and that \(\Phi\in
L_2(\mathfrak U,H^6_0(\mathbb T))\), where the subscript \(0\) denotes the
mean-zero subspace. We drive the exact solution and the splitting
approximation by the same Wiener process. Let \(G\) be the Lyapunov function
introduced in Section~\ref{sec:discrete-lyapunov}, with \(a>0\) chosen as in
Section~\ref{sec:strong-convergence}. For sufficiently large fixed damping
and sufficiently small \(h\), Theorem~\ref{thm:uniform-strong} gives
\[
 \sup_{n\ge0}
 \mathbb E\!\left[
 e^{2aG(U^n)}
 \|u(nh)-U^n\|_{L^2}^2
 \right]
 \le Ch^2.
\]
The constant \(C\) is independent of the step index, the final time, and the
step size. Since the weight is at least one, the splitting approximation has strong order
\(1\) uniformly in time, and the same weighted estimate also yields weak
order \(1\) uniformly in time. Under stronger conditions, the splitting
approximation admits a unique invariant probability measure on \(H^2_0\),
which approximates the invariant measure of the exact dynamics with error
\(O(h)\), both in a weighted dual distance and in the 2-Wasserstein distance
induced by the \(L^2\) norm. To our knowledge, these are the first
uniform-in-time strong error and quantitative invariant-measure
approximation results for this stochastic model. On finite time intervals, strong order \(1\) holds under a weaker
damping condition.

The present work is situated in the broader context of theoretical research
on stochastic KdV equations. In the periodic setting, de Bouard, Debussche,
and Tsutsumi proved local well-posedness with additive noise that is white
in time and almost white in space \cite{deBouardDebusscheTsutsumi2005}. Oh subsequently
established local well-posedness with additive space--time white noise in a
Besov-type space \(\widehat b^s_{p,\infty}(\mathbb T)\) \cite{Oh2009}.
On the real line, Greco, Oh, and Tsugawa obtained global well-posedness in
\(L^2(\mathbb R)\), removing an additional homogeneous Sobolev regularity
assumption in the earlier work of de Bouard, Debussche, and Tsutsumi
\cite{deBouardDebusscheTsutsumi1999,GrecoOhTsugawa2026}. For the damped sKdV
equation, Ekren et al.\ established the existence of an invariant measure on
the real line \cite{EkrenKukavicaZiane2018}. In the periodic setting,
Glatt-Holtz, Martinez, and Richards \cite{GHMR2024} made novel use of the
KdV conservation laws to develop a long-time statistical theory for the
damped sKdV equation. They established a Lyapunov structure for higher-order
Sobolev norms, which yields the existence of invariant measures, and proved
uniqueness when sufficiently many low Fourier modes are stochastically
forced. For sufficiently large damping, they further established a spectral
gap with respect to a Wasserstein-like distance and consequently obtained
uniqueness and exponential mixing. Their results play an important role in
the present long-time approximation analysis and motivate the choice of
\(G\) in our exponential weight.


On the numerical side, sKdV has been studied under several discretizations
and in different regimes. Debussche and Printems used a least-squares
finite-element method to study noise-perturbed solitons
\cite{DebusschePrintems1999} and later proved convergence in probability for
an implicit Crank--Nicolson time discretization on the real line, without a
rate in the step size \cite{DebusschePrintems2006}. D'Ambrosio and
Di Giovacchino studied numerical conservation properties of stochastic theta
methods for a noise that does not depend on the space variable
\cite{DAmbrosioDiGiovacchino2023}. More recently, Cui et al.\ obtained
explicit strong error bounds on finite time intervals for an approximation
of the periodic sKdV equation with small additive noise
\cite{CuiDAmbrosioDiGiovacchinoSun2026}. The numerical
ergodic behavior of the damped sKdV equation was studied in
\cite{LiLiExponential} using an explicit exponential integrator with
artificial viscosity. The Galilean splittings developed in our earlier work
\cite{LiWangLiGalilean} separate the full deterministic KdV dynamics from
the stochastic forcing. The present work builds on this splitting idea.
These studies focus on finite-time convergence, conservation properties, or
numerical ergodic behavior, rather than uniform-in-time error estimates and
quantitative invariant-measure approximation.

Related long-time numerical results have been established for several other
classes of stochastic evolution equations. For damped stochastic nonlinear
Schr\"odinger equations, uniform-in-time strong and weak convergence has been
obtained for a splitting method \cite{CuiHong2018}, while approximation of
invariant measures has been studied for an additive-noise model
\cite{ChenHongWang2017}. Long-time error bounds on parameter-dependent
intervals are also available in small-parameter regimes
\cite{DiGiovacchino2026}. In dissipative settings, related long-time analyses
have been developed for monotone and superlinear SPDEs, including stochastic
Allen--Cahn and Cahn--Hilliard equations
\cite{CaiLiu2026,HeDengZhangCao2026,LiuNumericalErgodicity2026}. For parabolic
SPDEs with non-globally Lipschitz coefficients, uniform-in-time weak error
estimates have been established for explicit full discretizations
\cite{JiangWang2025}, and strong and weak estimates for linearly implicit
finite element schemes \cite{QiWang2026}. Approximation of invariant
measures has also been studied through Kolmogorov or Poisson equations
\cite{Brehier2014,BrehierKopec2017} and through Lyapunov bounds combined with
quantitative contraction, including applications to stochastic
Navier--Stokes equations \cite{GlattHoltzMondaini2025}. These analyses
typically rely on smoothing properties of the linear part, monotonicity
properties of the drift, or nonlinearities that carry no spatial derivative,
whereas the damped sKdV equation has a quite different stability structure.

The remainder of the paper is organized as follows. Section~2 defines the
KdV--OU splitting, establishes its Feller property and exact conditional
\(L^2\)-moment identity, and studies the classical Airy--Burgers obstruction.
Section~3 introduces the modified invariants and establishes the Lyapunov
estimates used in the convergence analysis. Section~4 proves the local error
bounds and the strong convergence results on finite time intervals and
uniformly in time. Section~5 establishes the weak error bounds and the
approximation of invariant measures. Section~6 presents the numerical
illustrations, and Section~7 concludes the paper.

\section{Splittings for stochastic KdV}
\label{sec:stochastic-setting}

In this section, we define the KdV--OU splitting, in which both steps are
solved exactly, and derive the basic properties of the resulting
approximation: it is well defined and adapted on \(H^m\), it is a
time-homogeneous Markov process with Feller transition kernel, it reproduces
the spatial mean of the exact solution, and it satisfies an exact conditional
\(L^2\)-moment identity.  For the classical Airy--Burgers splitting, whose
Burgers step is defined only on a proper subset of the state space, we then
prove that, under a nondegeneracy condition on the noise, the composition
almost surely breaks down after finitely many steps.

\subsection{KdV--OU Lie--Trotter splitting}

Let \(\mathbb T=\mathbb R/(2\pi\mathbb Z)\), and let
\(H^m=H^m(\mathbb T;\mathbb R)\) denote the periodic Sobolev space
of order \(m\).  Define the spatial mean by
\[
 \Pi_0v=\frac{1}{2\pi}\int_{\mathbb T}v(x)\,\mathrm dx,
 \qquad
 H^m_0=\{v\in H^m:\Pi_0v=0\}.
\]
Let \(\mathfrak U\) be a separable Hilbert space.  We denote by
\(L_2(\mathfrak U,H^m)\) the space of Hilbert--Schmidt operators from
\(\mathfrak U\) to \(H^m\), equipped with the Hilbert--Schmidt norm
\(\|\cdot\|_{L_2(\mathfrak U,H^m)}\).  Let
\((\Omega,\mathcal F,(\mathcal F_t)_{t\geq0},\mathbb P)\) be a filtered
probability space satisfying the usual conditions, and let \(W\) be a
cylindrical \(\mathfrak U\)-Wiener process with respect to
\((\mathcal F_t)_{t\geq0}\).  The noise operator \(\Phi\) is deterministic,
and all stochastic integrals are understood in the It\^o sense.  We study
\begin{equation}
 \mathrm du=\{F(u)-\lambda u\}\,\mathrm dt+\Phi\,\mathrm dW,
 \qquad
 F(v)=-\partial_x^3v-\frac12\partial_x(v^2),
 \qquad \lambda>0.
 \label{eq:skdv}
\end{equation}

Fix an integer \(m\geq2\) and assume \(\Phi\in L_2(\mathfrak U,H^m)\).  Let
\((\mathcal K_t)_{t\in\mathbb R}\) denote the global periodic KdV flow on
\(H^m\) \cite{KappelerTopalov2006}.  For \(u_0\in H^m_0\) and
\(\Phi\in L_2(\mathfrak U,H^m_0)\), global pathwise well-posedness of
\eqref{eq:skdv} and continuous dependence on the initial data follow from
\cite[Proposition~2.6]{GHMR2024}.  For general spatial means, separate the
scalar Ornstein--Uhlenbeck process \(\Pi_0u\) from \(u-\Pi_0u\).  The latter
satisfies the mean-zero equation with noise coefficient \(\Phi-\Pi_0\Phi\)
and the additional transport term \(-(\Pi_0u)\partial_x(u-\Pi_0u)\).
Since \(\Pi_0u\) is continuous and this transport term contributes zero to
the evolution of every translation-invariant KdV functional, the
regularization and energy arguments extend to this equation, yielding
global pathwise well-posedness and continuous dependence on the initial
data in \(H^m\).

Throughout, \(C\) denotes a positive
constant whose value may change from one occurrence to the next; the
dependence of the constants in the main estimates is stated in each result.

For \(0\leq s<t\), define the exact Ornstein--Uhlenbeck solution map by
\begin{equation}
 \mathcal O^\omega_{t,s}(v)
 =e^{-\lambda(t-s)}v
  +\int_s^t e^{-\lambda(t-r)}\Phi\,\mathrm dW(r).
 \label{eq:ou-map}
\end{equation}
Let \(\mathcal S^\omega_{t,s}(x)\) denote the solution of \eqref{eq:skdv} at
time \(t\), started from \(x\) at time \(s\) and driven by \(W\) on \([s,t]\).
For \(h>0\), define the KdV--OU Lie--Trotter one-step map by
\begin{equation}
 \mathcal L^\omega_{s+h,s}(x)
 =\mathcal O^\omega_{s+h,s}(\mathcal K_hx)
 =e^{-\lambda h}\mathcal K_hx
  +\int_s^{s+h}e^{-\lambda(s+h-r)}\Phi\,\mathrm dW(r).
 \label{eq:lie-map}
\end{equation}
Thus each step first applies the KdV flow over a time \(h\), and then the
exact Ornstein--Uhlenbeck step.  With \(t_n=nh\), the splitting approximation
\((U^n)_{n\geq0}\) is defined by
\begin{equation}
 U^{n+1}=\mathcal L^\omega_{t_{n+1},t_n}(U^n).
 \label{eq:splitting-approximation}
\end{equation}
Here \(U^0\) denotes the initial state.

For a bounded measurable function \(\varphi:H^m\to\mathbb R\), define
\[
 P_t\varphi(x)=\mathbb E[\varphi(\mathcal S^\omega_{t,0}x)],
 \qquad
 Q_h\varphi(x)=\mathbb E[\varphi(\mathcal L^\omega_{h,0}x)].
\]
For a probability measure \(\nu\) on \(H^m\), define \(\nu P_t\) and
\(\nu Q_h\) by
\[
 (\nu P_t)(\varphi)=\nu(P_t\varphi),
 \qquad
 (\nu Q_h)(\varphi)=\nu(Q_h\varphi).
\]

The following proposition collects the basic properties of the splitting.

\begin{proposition}[Basic properties of the KdV--OU splitting]
\label{prop:basic-structure}
Under the stated well-posedness assumptions, for every \(h>0\) and every
\(H^m\)-valued \(\mathcal F_0\)-measurable initial state \(U^0\), the
splitting approximation \((U^n)_{n\geq0}\) defined by
\eqref{eq:splitting-approximation} is well defined for all \(n\geq0\) and
adapted.  Moreover, it is a time-homogeneous discrete-time Markov process on
\(H^m\) with Feller transition kernel \(Q_h\).

If the exact solution and the splitting approximation start from the same
initial state and are driven by the same Wiener process, then
\begin{equation}
 \Pi_0U^{n+1}=e^{-\lambda h}\Pi_0U^n
 +\int_{t_n}^{t_{n+1}}e^{-\lambda(t_{n+1}-r)}\Pi_0\Phi\,\mathrm dW(r)
 =\Pi_0u(t_{n+1})
 \label{eq:mean-exact}
\end{equation}
for every \(n\geq0\), almost surely.  The kernel \(Q_h\) leaves \(H^m_0\)
invariant if and only if \(\Pi_0\Phi=0\).  Consequently, if
\(\Pi_0\Phi=0\) and \(U^0\in H^m_0\) almost surely, then
\(U^n\in H^m_0\) almost surely for every \(n\geq0\).
\end{proposition}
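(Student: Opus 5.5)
The plan is to treat the one-step map \(\mathcal L^\omega_{s+h,s}\) as a composition of a deterministic continuous map and an independent Gaussian translation. Every claim in Proposition~\ref{prop:basic-structure} then follows from properties of the KdV flow \(\mathcal K_h\) on \(H^m\) together with independence of Wiener increments.

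\textbf{Well-definedness, adaptedness and the Markov property.} By \cite{KappelerTopalov2006}, \(\mathcal K_h:H^m\to H^m\) is a globally defined continuous map, hence Borel measurable. The stochastic convolution \(Z_n=\int_{t_n}^{t_{n+1}}e^{-\lambda(t_{n+1}-r)}\Phi\,\mathrm dW(r)\) is a well-defined \(H^m\)-valued Gaussian variable because \(\Phi\in L_2(\mathfrak U,H^m)\). It is \(\mathcal F_{t_{n+1}}\)-measurable and independent of \(\mathcal F_{t_n}\), and its law \(\mu_h\) does not depend on \(n\).

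Induction then gives the first two claims. If \(U^n\) is \(\mathcal F_{t_n}\)-measurable, then \(U^{n+1}=e^{-\lambda h}\mathcal K_hU^n+Z_n\) is \(\mathcal F_{t_{n+1}}\)-measurable and takes values in \(H^m\). For bounded measurable \(\varphi\), the freezing lemma (using independence of \(Z_n\) from \(\mathcal F_{t_n}\)) gives
\[
\mathbb E[\varphi(U^{n+1})\mid\mathcal F_{t_n}]=\int\varphi(e^{-\lambda h}\mathcal K_hU^n+z)\,\mu_h(\mathrm dz)=Q_h\varphi(U^n).
\]
This is exactly the time-homogeneous Markov property with kernel \(Q_h\).

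\textbf{Feller property.} Let \(\varphi\) be bounded and continuous, and let \(x_k\to x\) in \(H^m\). Continuity of \(\mathcal K_h\) gives \(\varphi(e^{-\lambda h}\mathcal K_hx_k+z)\to\varphi(e^{-\lambda h}\mathcal K_hx+z)\) for every \(z\). Dominated convergence then yields \(Q_h\varphi(x_k)\to Q_h\varphi(x)\).

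The one genuinely non-routine input is continuity of the KdV flow map on \(H^m\) for general, not mean-zero, data. I expect this to be the main point requiring care. I would cite \cite{KappelerTopalov2006} for mean-zero data. For data with mean \(c\), I would reduce to that case via the Galilean change of variables \(v(t,x)=u(t,x+ct)-c\), which maps KdV with mean \(c\) onto mean-zero KdV continuously in \(H^m\).

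\textbf{Mean identity.} Integrating KdV over \(\mathbb T\) kills \(\partial_x^3u\) and \(\partial_x(u^2)\), so \(\Pi_0\mathcal K_hx=\Pi_0x\). Since \(\Pi_0\) is a bounded linear functional, it commutes with the stochastic integral, and applying it to \eqref{eq:lie-map} gives the first equality in \eqref{eq:mean-exact}. For the exact solution, apply \(\Pi_0\) to \eqref{eq:skdv}; the flux terms vanish again, so \(\Pi_0u\) solves the scalar OU equation \(\mathrm d\Pi_0u=-\lambda\Pi_0u\,\mathrm dt+\Pi_0\Phi\,\mathrm dW\). Its variation-of-constants formula on \([t_n,t_{n+1}]\) has the same form, and induction from \(\Pi_0U^0=\Pi_0u(0)\) gives \(\Pi_0U^{n}=\Pi_0u(t_n)\) for all \(n\), almost surely.

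\textbf{Invariance of \(H^m_0\).} If \(\Pi_0\Phi=0\), then \(\Pi_0Z_n=0\), and by the mean identity \(x\in H^m_0\) implies \(\mathcal L^\omega_{h,0}x\in H^m_0\) almost surely. Hence \(Q_h(x,H^m_0)=1\), and induction gives \(U^n\in H^m_0\) for every \(n\).

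Conversely, suppose \(\Pi_0\Phi\ne0\). Then for \(x\in H^m_0\), the scalar \(\Pi_0\mathcal L^\omega_{h,0}x=\int_0^he^{-\lambda(h-r)}\Pi_0\Phi\,\mathrm dW(r)\) is a centered Gaussian with variance
\[
\frac{1-e^{-2\lambda h}}{2\lambda}\,\|\Pi_0\Phi\|_{L_2(\mathfrak U,\mathbb R)}^2>0,
\]
so it vanishes with probability zero. Thus \(Q_h(x,H^m_0)=0\neq1\), and \(H^m_0\) is not invariant.
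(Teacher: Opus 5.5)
Your proposal is correct and follows the paper's proof essentially step by step. You obtain adaptedness by induction and the Markov property from the independence and stationarity of the stochastic convolution. You get the Feller property from continuity of \(\mathcal K_h\) and dominated convergence with a common noise, and the mean identity by applying \(\Pi_0\) to both dynamics to obtain the same scalar Ornstein--Uhlenbeck update. The invariance criterion comes from the variance \(\frac{1-e^{-2\lambda h}}{2\lambda}\|\Pi_0\Phi\|_{L_2(\mathfrak U,\mathbb R)}^2\) of the output mean, exactly as in the paper. Your Galilean reduction \(v(t,x)=u(t,x+ct)-c\) for data of nonzero mean is a small extra justification; the paper covers this point by citing global continuity of the KdV flow on \(H^m\).
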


\begin{proof}
The periodic KdV flow is globally defined and continuous on \(H^m\), while
the stochastic integral in \eqref{eq:lie-map} is a Gaussian random variable
with values in \(H^m\) and mean zero.  Hence the one-step map is measurable and
defined on all of \(H^m\).  By induction using
\eqref{eq:splitting-approximation}, \(U^n\) is
\(\mathcal F_{t_n}\)-measurable.  The stochastic convolution over
\([t_n,t_{n+1}]\) is independent of \(\mathcal F_{t_n}\) and has the same
law for every \(n\).  Therefore, for every bounded measurable \(\varphi\),
\[
 \mathbb E[\varphi(U^{n+1})\mid\mathcal F_{t_n}]
 =Q_h\varphi(U^n),
\]
which proves the time-homogeneous Markov property.  If \(v_k\to v\) in
\(H^m\), then \(\mathcal K_hv_k\to\mathcal K_hv\).  Using the same stochastic
convolution in \eqref{eq:lie-map}, continuity of \(\varphi\) and dominated
convergence yield \(Q_h\varphi(v_k)\to Q_h\varphi(v)\) for every
\(\varphi\in C_b(H^m)\).  Hence \(Q_h\) is Feller.

Since \(\Pi_0F(v)=0\) for periodic \(v\), the KdV flow preserves the spatial
mean.  Applying \(\Pi_0\) to \eqref{eq:skdv} and \eqref{eq:lie-map} yields the
same scalar Ornstein--Uhlenbeck update at the times \(t_n\), which proves
\eqref{eq:mean-exact}.  For a mean-zero input, the output mean is a centered
Gaussian random variable with variance
\[
 \frac{1-e^{-2\lambda h}}{2\lambda}
 \|\Pi_0\Phi\|_{L_2(\mathfrak U,\mathbb R)}^2.
\]
It vanishes almost surely if and only if \(\Pi_0\Phi=0\), which proves the
invariance criterion.  Induction then gives \(U^n\in H^m_0\) almost surely
for every \(n\geq0\) whenever \(U^0\in H^m_0\) almost surely.
\end{proof}

Since the KdV step conserves the \(L^2\)-norm, the splitting approximation
obeys the following exact conditional moment identity.

\begin{proposition}[Exact conditional \(L^2\)-moment identity]
\label{prop:l2-balance}
For every \(n\geq0\),
\begin{equation}
 \mathbb E[\|U^{n+1}\|_{L^2}^2\mid\mathcal F_{t_n}]
 =e^{-2\lambda h}\|U^n\|_{L^2}^2
 +\frac{1-e^{-2\lambda h}}{2\lambda}
   \|\Phi\|_{L_2(\mathfrak U,L^2)}^2.
 \label{eq:split-l2-balance}
\end{equation}
Consequently,
\begin{equation}
 \mathbb E[\|U^n\|_{L^2}^2]
 =e^{-2\lambda nh}\mathbb E[\|U^0\|_{L^2}^2]
 +\frac{1-e^{-2\lambda nh}}{2\lambda}
   \|\Phi\|_{L_2(\mathfrak U,L^2)}^2.
 \label{eq:split-l2-iteration}
\end{equation}
\end{proposition}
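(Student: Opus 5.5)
The plan is to condition on \(\mathcal F_{t_n}\) and expand the square of the one-step map \eqref{eq:lie-map}. Write \(U^{n+1}=e^{-\lambda h}\mathcal K_hU^n+Z_n\), where
\[
 Z_n=\int_{t_n}^{t_{n+1}}e^{-\lambda(t_{n+1}-r)}\Phi\,\mathrm dW(r).
\]
Then
\[
 \|U^{n+1}\|_{L^2}^2=e^{-2\lambda h}\|\mathcal K_hU^n\|_{L^2}^2
 +2e^{-\lambda h}\langle \mathcal K_hU^n,Z_n\rangle_{L^2}
 +\|Z_n\|_{L^2}^2 .
\]
We treat the three terms separately.

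\emph{First term.} The periodic KdV flow conserves the \(L^2\)-norm, since \(\int_{\mathbb T} v\,F(v)\,\mathrm dx=0\). This holds for smooth data by integration by parts and extends to \(H^m\), \(m\ge2\), because the flow is globally defined and continuous there. Hence \(\|\mathcal K_hU^n\|_{L^2}=\|U^n\|_{L^2}\). This term is \(\mathcal F_{t_n}\)-measurable and passes through the conditional expectation unchanged.

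\emph{Cross term.} By Proposition~\ref{prop:basic-structure}, \(U^n\) is \(\mathcal F_{t_n}\)-measurable. Hence \(\mathcal K_hU^n\) is \(\mathcal F_{t_n}\)-measurable, since \(\mathcal K_h\) is continuous. The convolution \(Z_n\) is independent of \(\mathcal F_{t_n}\) and centered Gaussian. By the freezing (independence) lemma, the conditional expectation of the cross term is \(2e^{-\lambda h}\langle v,\mathbb E Z_n\rangle\) evaluated at \(v=\mathcal K_hU^n\), which is \(0\).

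The only technical point is integrability. We do not assume any moment on \(U^0\), so we argue as follows. For fixed \(v\), \(\langle v,Z_n\rangle\) is a real Gaussian with mean zero. The freezing lemma applies to the nonnegative decomposition, or equivalently we compute \(\mathbb E[\,\cdot\,\mid\mathcal F_{t_n}]\) as the integral against the law of \(Z_n\), which is legitimate for nonnegative functionals. We apply it to the whole nonnegative quantity \(\|e^{-\lambda h}\mathcal K_h v+Z_n\|^2\) directly. This sidesteps integrability issues, as conditional expectations of nonnegative variables are always defined.

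\emph{Third term.} Using Itô's isometry for Hilbert-space-valued stochastic integrals,
\[
 \mathbb E\|Z_n\|_{L^2}^2=\int_{t_n}^{t_{n+1}}e^{-2\lambda(t_{n+1}-r)}\|\Phi\|_{L_2(\mathfrak U,L^2)}^2\,\mathrm dr
 =\frac{1-e^{-2\lambda h}}{2\lambda}\|\Phi\|_{L_2(\mathfrak U,L^2)}^2 .
\]
Combining the three terms gives \eqref{eq:split-l2-balance}.

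For \eqref{eq:split-l2-iteration}, take full expectations in \eqref{eq:split-l2-balance} to get the affine recursion
\[
 a_{n+1}=e^{-2\lambda h}a_n+\frac{1-e^{-2\lambda h}}{2\lambda}\|\Phi\|^2_{L_2(\mathfrak U,L^2)},
 \qquad a_n=\mathbb E\|U^n\|_{L^2}^2 ,
\]
valid in \([0,\infty]\). Induction, using the geometric sum \(\sum_{k=0}^{n-1}e^{-2\lambda hk}(1-e^{-2\lambda h})=1-e^{-2\lambda nh}\), gives the closed form. The main (mild) obstacle is justifying \(L^2\) conservation of \(\mathcal K_h\) on \(H^m\) and the conditioning step without moment assumptions; both are handled as above.
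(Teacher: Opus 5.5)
Your proposal is correct and follows the paper's proof: you expand the one-step map, kill the cross term using independence and centering of the stochastic convolution, use \(L^2\) conservation of the KdV flow and It\^o's isometry, and then iterate the affine recursion. Your extra step applies the freezing lemma to the whole nonnegative quantity \(\|e^{-\lambda h}\mathcal K_hv+Z_n\|_{L^2}^2\). This is a small refinement the paper leaves implicit, and it removes any need for a finite second moment of \(U^0\), with the recursion read in \([0,\infty]\).
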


\begin{proof}
Conditioning on \(\mathcal F_{t_n}\) in \eqref{eq:lie-map}, the stochastic
convolution has mean zero and is independent of \(\mathcal F_{t_n}\), so the
cross term vanishes.  Conservation of the \(L^2\)-norm under the KdV flow gives
\[
 \|\mathcal K_hU^n\|_{L^2}=\|U^n\|_{L^2},
\]
and It\^o's isometry yields the last term in
\eqref{eq:split-l2-balance}.  Taking expectations in
\eqref{eq:split-l2-balance} and iterating the resulting recurrence yields
\eqref{eq:split-l2-iteration}.
\end{proof}

These identities hold for every \(m\geq2\).  The long-time estimates of
Sections~4 and~5 are carried out on the mean-zero subspace \(H^m_0\), where
the required exponential Lyapunov bounds and contraction estimates for the
exact Markov semigroup are available.

\subsection{Obstruction to the classical Airy--Burgers splitting}
\label{sec:burgers-obstruction}

We now turn to the classical Airy--Burgers splitting, which separates the
Airy part from the nonlinearity.  Its Burgers step, unlike the KdV step, is
defined only on a proper subset of the state space.  The obstruction
established here motivates the use of the globally defined KdV flow in
\eqref{eq:lie-map}; the analysis of Sections~3--5 does not depend on it.

Throughout this subsection, the step size \(h>0\) is fixed.  We consider the
Lie--Trotter composition in which each time step consists of a stochastic
Airy--damping step of length \(h\) followed by a Burgers step of length
\(h\).

Consider the periodic Burgers equation
\begin{equation}
 \partial_tv+v\partial_xv=0,
 \qquad v(0)=f\in H^\sigma,
 \qquad \sigma>\frac32.
 \label{eq:burgers}
\end{equation}
Its characteristic map is \(X_t(\xi)=\xi+tf(\xi)\).  As long as this map is
a diffeomorphism,
\begin{equation}
 v(t,X_t(\xi))=f(\xi),
 \qquad
 (\partial_xv)(t,X_t(\xi))=\frac{f'(\xi)}{1+tf'(\xi)}.
 \label{eq:burgers-characteristics}
\end{equation}
Consequently, the initial data whose classical \(H^\sigma\)-solution exists
on \([0,\tau_{\mathrm B}]\), with \(\tau_{\mathrm B}>0\), are exactly those
in
\begin{equation}
 D_{\tau_{\mathrm B}}=\left\{f\in H^\sigma:
 \inf_{x\in\mathbb T}f'(x)>-\tau_{\mathrm B}^{-1}\right\}.
 \label{eq:burgers-domain}
\end{equation}
Indeed, \(f'\) is continuous on \(\mathbb T\) and therefore attains its
infimum, so for \(f\notin D_{\tau_{\mathrm B}}\) the denominator in
\eqref{eq:burgers-characteristics} vanishes at some time in
\((0,\tau_{\mathrm B}]\) and \(\partial_xv\) blows up there.  This is also why the
inequality in \eqref{eq:burgers-domain} is strict.  Each Burgers step in the
composition above has length \(h\), so the relevant domain is \(D_h\).

For the stochastic Airy--damping step, set
\[
 Z_h=\int_0^h e^{-(h-r)(\partial_x^3+\lambda)}\Phi\,\mathrm dW(r).
\]
If this step starts at time \(t\) from an
\(\mathcal F_t\)-measurable state \(V\), the initial datum for the following
Burgers step is
\begin{equation}
 e^{-h(\partial_x^3+\lambda)}V
 +\int_t^{t+h}
 e^{-(t+h-r)(\partial_x^3+\lambda)}\Phi\,\mathrm dW(r).
 \label{eq:stochastic-airy-output}
\end{equation}
The stochastic convolution in \eqref{eq:stochastic-airy-output} is independent
of \(\mathcal F_t\) and has the same centered Gaussian law as \(Z_h\).  We use
the following nondegeneracy condition on its spatial derivative:
\begin{equation}
 \inf_{x\in\mathbb T}\mathbb E[|\partial_xZ_h(x)|^2]>0.
 \label{eq:airy-noise-nondegeneracy}
\end{equation}
This condition does not follow from \(\Phi\neq0\): if the noise is supported
only on the constant Fourier mode, then \(\partial_xZ_h=0\) and
\eqref{eq:airy-noise-nondegeneracy} fails.  As an example
satisfying \eqref{eq:airy-noise-nondegeneracy}, let \(\mathfrak U=\mathbb R^2\)
and
\[
 \Phi\,\mathrm dW=\gamma\cos(kx)\,\mathrm d\beta_1
             +\gamma\sin(kx)\,\mathrm d\beta_2,
 \qquad \gamma>0,\quad k\in\mathbb Z\setminus\{0\},
\]
where \(\beta_1\) and \(\beta_2\) are independent Brownian motions.  The Airy
evolution acts as a rotation on the span of \(\cos(kx)\) and \(\sin(kx)\), so
the It\^o isometry gives
\[
 \mathbb E[|\partial_xZ_h(x)|^2]
 =\frac{\gamma^2k^2}{2\lambda}(1-e^{-2\lambda h}),
\]
the same positive value at every \(x\).

Under \eqref{eq:airy-noise-nondegeneracy}, each time step produces an initial
datum outside \(D_h\) with a probability bounded below independently of the
current state and of the number of steps already taken.  Iterating this bound
gives the following result.

\begin{theorem}[Almost-sure breakdown of the classical Airy--Burgers splitting]
\label{thm:burgers-breakdown}
Assume that \(Z_h\) is \(H^\sigma\)-valued and satisfies the nondegeneracy
condition \eqref{eq:airy-noise-nondegeneracy}.  Then, with
probability one, the Lie--Trotter composition breaks down after finitely many
steps: a Burgers step has no classical \(H^\sigma\)-solution on its
prescribed time interval.  In particular, it does not define an
\(H^\sigma\)-valued approximation for arbitrarily many steps.
\end{theorem}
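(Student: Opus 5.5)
The plan is to prove a uniform lower bound
\[
 \mathbb P\bigl(f\notin D_h\bigr)\ge p_0>0
\]
for every deterministic input \(g\in H^\sigma\), where \(f=g+Z_h\) is the datum handed to the Burgers step. We then condition step by step and use independence of the stochastic convolutions over disjoint time intervals.

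First we fix a point \(x_0\in\mathbb T\). For every \(n\), set
\[
 g_n=e^{-h(\partial_x^3+\lambda)}V_n,
\]
where \(V_n\) is the (\(\mathcal F_{t_n}\)-measurable) state at the start of step \(n+1\), defined on the event that the first \(n\) Burgers steps succeeded. The datum for the next Burgers step is \(g_n+Y_n\), where \(Y_n\) is the stochastic convolution over \([t_n,t_{n+1}]\). By \eqref{eq:stochastic-airy-output}, \(Y_n\) is independent of \(\mathcal F_{t_n}\) and equal in law to \(Z_h\).

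Since \(\sigma>3/2\), the map \(v\mapsto v'(x_0)\) is a continuous linear functional on \(H^\sigma\). Hence \(\xi:=\partial_xZ_h(x_0)\) is a centered real Gaussian with variance \(s^2\ge\inf_x\mathbb E|\partial_xZ_h(x)|^2>0\) by \eqref{eq:airy-noise-nondegeneracy}. For any real constant \(c\),
\[
 \mathbb P(c+\xi\le -h^{-1})\ge \mathbb P(\xi\le -h^{-1}-|c|)\ \text{if } c\le0,
\]
which is not uniform in \(c\). The key remedy is to use symmetry instead. If \(c\le 0\), then \(\mathbb P(c+\xi\le-h^{-1})\ge\mathbb P(\xi\le -h^{-1})\). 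If \(c>0\), we look for a point where the derivative of \(g\) is nonpositive. Since \(g\) is periodic, \(\int_{\mathbb T}g'=0\), so there is some \(x_1\) (depending on \(g\)) with \(g'(x_1)\le0\). Then
\[
 \mathbb P\bigl(\inf_x (g+Z_h)'(x)\le -h^{-1}\bigr)\ge \mathbb P\bigl(\partial_xZ_h(x_1)\le -h^{-1}\bigr)\ge \Phi_{\mathcal N}\!\bigl(-h^{-1}/\bar s\bigr)=:p_0,
\]
where \(\bar s^2=\inf_x\mathbb E|\partial_xZ_h(x)|^2\). This uses that a centered Gaussian with variance \(s^2\ge\bar s^2\) satisfies \(\mathbb P(\xi\le -a)=\Phi_{\mathcal N}(-a/s)\ge\Phi_{\mathcal N}(-a/\bar s)\) for \(a>0\). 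Since \(\inf_x f'\le -h^{-1}\) means \(f\notin D_h\), this gives the uniform bound. The point \(x_1\) must be chosen measurably in \(g\), for instance as the smallest minimizer of \(g'\) on \([0,2\pi)\); the bound only uses \(\inf_x g'\le 0\) together with the Gaussian estimate at one point, so this is harmless.

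Next we let \(A_n\) be the event that the first \(n\) Burgers data lie in \(D_h\), so that \(A_n\in\mathcal F_{t_n}\). Conditioning on \(\mathcal F_{t_n}\) and applying the bound above with \(g=g_n\) frozen, which is legitimate by independence of \(Y_n\) from \(\mathcal F_{t_n}\), gives
\[
 \mathbb P(A_{n+1}\mid\mathcal F_{t_n})\le(1-p_0)\mathbf 1_{A_n}.
\]
Iterating yields \(\mathbb P(A_n)\le(1-p_0)^n\to0\), so \(\mathbb P(\bigcap_nA_n)=0\). In words, almost surely some Burgers step receives a datum outside \(D_h\). By the characterization \eqref{eq:burgers-domain}, that step has no classical \(H^\sigma\)-solution on \([0,h]\).

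The main obstacle is the uniformity of \(p_0\) in the unknown deterministic part \(g_n\), which can have large positive derivative at any fixed point. The mean-zero property of \(g'\) on the torus resolves this by always giving a point where the shift helps rather than hurts. The remaining steps are routine measurability checks: \(V_n\) is defined only on \(A_n\), and on the complement we set it to \(0\).
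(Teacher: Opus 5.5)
Your proposal is correct and follows essentially the same route as the paper. You choose an $\mathcal F_{t_n}$-measurable point where the derivative of the damped Airy-evolved state is nonpositive, which exists because $g'$ has mean zero on $\mathbb T$. At that point you bound below the conditional probability that the fresh Gaussian derivative falls below $-h^{-1}$ by $\Psi(-1/(h\sqrt{q_h}))$, using monotonicity of the Gaussian tail in the variance, and you iterate with the tower property to get $\mathbb P(A_N)\le(1-p_h)^N$. Your explicit choice of the smallest minimizer of $g'$ is simply a concrete version of the measurable selection the paper invokes.
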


\begin{proof}
Set \(q_h=\inf_{x\in\mathbb T}\mathbb E[|\partial_xZ_h(x)|^2]\), which is
positive by \eqref{eq:airy-noise-nondegeneracy}.
Let \(\Psi\) denote the standard normal cumulative distribution
function, and set
\[
 p_h=\Psi\!\left(-\frac{1}{h\sqrt{q_h}}\right)>0.
\]
Let \(A_N\) be the event that the first \(N\) time steps can be completed,
with \(A_0=\Omega\), and let \(t=Nh\).

On \(A_N\), let \(V\) be the \(\mathcal F_t\)-measurable input to the next
stochastic Airy--damping step.  The initial datum \(f\) for the ensuing
Burgers step is the expression in \eqref{eq:stochastic-airy-output}.  Since
\(H^\sigma\hookrightarrow C^1(\mathbb T)\) continuously and \(\mathbb T\) is
compact, the map \(x\mapsto\partial_x(e^{-h(\partial_x^3+\lambda)}V)(x)\) is
continuous and attains its minimum; a standard measurable-selection argument
provides an \(\mathcal F_t\)-measurable minimizer \(x_*\).  This derivative
has spatial mean zero by periodicity, so its minimum is nonpositive:
\[
 \partial_x(e^{-h(\partial_x^3+\lambda)}V)(x_*)\leq0.
\]
Conditioning on \(\mathcal F_t\) fixes \(x_*\), while the stochastic
convolution in \eqref{eq:stochastic-airy-output} is independent of
\(\mathcal F_t\).  Its spatial derivative at \(x_*\) is therefore
conditionally Gaussian with mean zero and variance at least \(q_h\).
If this derivative is at most \(-h^{-1}\), the nonpositive deterministic
contribution gives \(f'(x_*)\leq-h^{-1}\), and hence \(f\notin D_h\).
Since the lower tail probability of a centered Gaussian variable at a fixed
negative threshold is nondecreasing in its variance,
\[
 \mathbb P(f\notin D_h\mid\mathcal F_t)
 \geq p_h.
\]
It follows that, on \(A_N\), the conditional probability of completing the
next time step is at most \(1-p_h\).  Since \(A_{N+1}\subseteq A_N\) and
\(A_N\in\mathcal F_t\), the tower property yields
\[
\begin{aligned}
 \mathbb P(A_{N+1})
 &=\mathbb E\!\left[
   \mathbb E[\mathbf 1_{A_{N+1}}\mid\mathcal F_t]\right]\\
 &=\mathbb E\!\left[\mathbf 1_{A_N}
   \mathbb P(A_{N+1}\mid\mathcal F_t)\right]\\
 &\leq(1-p_h)\mathbb P(A_N).
\end{aligned}
\]
Hence \(\mathbb P(A_N)\leq(1-p_h)^N\to0\).  The event of indefinite iteration
is \(\bigcap_{N\geq1}A_N\), which has probability zero.
\end{proof}

\begin{remark}[Scope of Theorem~\ref{thm:burgers-breakdown}]
Theorem~\ref{thm:burgers-breakdown} concerns the classical Sobolev Burgers
flow used in deterministic splitting analyses such as \cite{HKRT2011}.
The result concerns indefinite iteration at each fixed \(h>0\) and does not
exclude convergence on fixed finite time intervals as \(h\to0\).
\end{remark}

\begin{remark}[Strang splitting]
The same conclusion holds for both Strang compositions by an
analogous argument, assuming the same regularity and nondegeneracy
conditions on \(Z_{h/2}\).
\end{remark}

\section{Lyapunov structure and uniform moment bounds}
\label{sec:discrete-lyapunov}

In this section, we introduce two modified invariants that the KdV step
preserves exactly.  The splitting approximation therefore satisfies the same
Lyapunov inequalities as the exact solution, which yields moment bounds that
are uniform in the number of steps and in the step size.  These bounds are
used throughout Sections~4 and~5.

\subsection{Modified invariants and moment estimates}

The two functionals used throughout are obtained from the second and sixth
invariants of the KdV hierarchy by adding corrections that make them coercive;
we refer to them as the modified invariants.  Throughout this subsection we
work on \(H^6_0\) under the assumptions
\begin{equation}
 \Phi\in L_2(\mathfrak U,H^6_0),
 \qquad \lambda\geq1.
 \label{eq:lyapunov-assumptions}
\end{equation}
We use the KdV conservation laws and their coercive modifications from
\cite[Section~2.2 and (7.6)--(7.7)]{GHMR2024}.  Choose \(\bar\alpha_2>0\) sufficiently large
for \eqref{eq:I2-coercive} and \eqref{eq:G-semimartingale-bounds}, and set
\begin{align}
 I_2(v)&=\int_{\mathbb T}\left\{(\partial_x^2v)^2
 -\frac53v(\partial_xv)^2+\frac5{36}v^4\right\}\,\mathrm dx,
 \label{eq:I2-definition}\\
 I_2^+(v)&=I_2(v)+\bar\alpha_2(1+\|v\|_{L^2}^2)^{7/3}.
 \label{eq:I2plus-definition}
\end{align}
With this choice, \(I_2^+\geq1\) and
\begin{equation}
 c\left(\|\partial_x^2v\|_{L^2}^2+(1+\|v\|_{L^2}^2)^{7/3}\right)
 \leq I_2^+(v)
 \leq C\left(\|\partial_x^2v\|_{L^2}^2+(1+\|v\|_{L^2}^2)^{7/3}\right).
 \label{eq:I2-coercive}
\end{equation}
Let \(I_6\) denote the sixth invariant of the KdV hierarchy, written in the
form
\begin{equation}
 I_6(v)=\int_{\mathbb T}\left\{(\partial_x^6v)^2
       -\alpha_6v(\partial_x^5v)^2
       +\mathcal Q_6(v,\partial_xv,\ldots,\partial_x^4v)\right\}\,\mathrm dx,
 \label{eq:I6-structure}
\end{equation}
where \(\mathcal Q_6\) is the lower-order differential polynomial appearing in
that invariant; see the construction there.  Choose
\(\bar\alpha_6,\bar q_6\geq1\) sufficiently large so
that \eqref{eq:I6-coercive} and \eqref{eq:I6-coefficient-bounds} hold,
and define
\begin{equation}
 I_6^+(v):=I_6(v)+\bar\alpha_6
 (1+\|v\|_{L^2}^2)^{\bar q_6}.
 \label{eq:I6plus-definition}
\end{equation}
With this choice, \(I_6^+\geq1\) and
\begin{equation}
 c\left(\|v\|_{H^6}^2+(1+\|v\|_{L^2}^2)^{\bar q_6}\right)
 \leq I_6^+(v)
 \leq C\left(\|v\|_{H^6}^2+(1+\|v\|_{L^2}^2)^{\bar q_6}\right).
 \label{eq:I6-coercive}
\end{equation}
Both functionals are exactly invariant under the KdV flow:
\begin{equation}
 I_2^+(\mathcal K_tv)=I_2^+(v),
 \qquad I_6^+(\mathcal K_tv)=I_6^+(v).
 \label{eq:modified-invariance}
\end{equation}
These identities follow from the KdV conservation laws for smooth \(v\), and
extend by density and continuity to \(H^2\) for \(I_2^+\) and to \(H^6\) for
\(I_6^+\).  Since each KdV step leaves both invariants unchanged, the Lyapunov
estimate for the splitting approximation is carried entirely by the
Ornstein--Uhlenbeck steps.  In addition, the mean-zero assumption allows us to
apply the stochastic Lyapunov estimates of that reference.

Define the Lyapunov function
\begin{equation}
 G(v)=\bigl(I_2^+(v)\bigr)^{3/7}.
 \label{eq:G-definition}
\end{equation}
This power matches the spatial estimate needed for KdV stability; see also
\cite[Remark~7.4]{GHMR2024}.  Indeed,
\eqref{eq:I2-coercive} and the one-dimensional Gagliardo--Nirenberg inequality
give, for \(v\in H^2_0\),
\begin{equation}
 \|\partial_xv\|_{L^\infty}
 \leq C\|v\|_{L^2}^{1/4}\|\partial_x^2v\|_{L^2}^{3/4}
 \leq C\bigl(I_2^+(v)\bigr)^{3/56+3/8}=CG(v).
 \label{eq:G-spatial-bound}
\end{equation}
The exponent \(3/7\) is the smallest power of \(I_2^+\) that controls
\(\|\partial_xv\|_{L^\infty}\) through these interpolation and coercivity estimates.
Moreover, \(G(v)\) is comparable to
\(\|\partial_x^2v\|_{L^2}^{6/7}+1+\|v\|_{L^2}^2\).

For \(s\in(t_j,t_{j+1}]\), define the auxiliary piecewise
Ornstein--Uhlenbeck interpolation process of the splitting approximation by
\begin{equation}
 \widetilde U_h(s)=e^{-\lambda(s-t_j)}\mathcal K_hU^j
 +\int_{t_j}^{s}e^{-\lambda(s-r)}\Phi\,\mathrm dW(r).
 \label{eq:ou-interpolation}
\end{equation}
Thus \(\widetilde U_h(t_{j+1})=U^{j+1}\).  At each \(t_j\), the interpolation
jumps from \(U^j\) to \(\mathcal K_hU^j\).  By
\eqref{eq:modified-invariance}, neither modified invariant changes across this
jump.

Let \(\mathcal X\) denote either the exact solution
\(u\) or the splitting interpolation \(\widetilde U_h\).  We allow arbitrary
\(T\geq0\) in the exact case and take \(T=nh\) in the splitting case, so
that \(\mathcal X(T)=U^n\).  We write \(\mathcal X(0)=u(0)\) or \(U^0\),
respectively.

\begin{proposition}[Uniform bounds for the modified invariants]
\label{prop:uniform-lyapunov}
Assume \eqref{eq:lyapunov-assumptions}, and let \(\mathcal X(0)\) be an
\(H^6_0\)-valued, \(\mathcal F_0\)-measurable random variable.

For every \(p>0\), there is
\(C_p=C_p(\Phi)<\infty\), independent of \(h,T\), \(\lambda\geq1\), and
the initial state, such that
\begin{equation}
 \mathbb E\!\left[\bigl(I_6^+(\mathcal X(T))\bigr)^p
       \middle|\mathcal F_0\right]
 \leq e^{-\lambda pT}\bigl(I_6^+(\mathcal X(0))\bigr)^p+C_p.
 \label{eq:common-polynomial-bound}
\end{equation}
There are constants
\(\eta_*=\eta_*(\|\Phi\|_{L_2(\mathfrak U,H^2)})>0\),
\(C_0=C_0(\Phi)<\infty\), and \(C_E=C_E(\Phi)<\infty\), independent of
\(h\), \(T\), \(\lambda\geq1\), and the initial state, such that, for
\(0\leq\eta\leq\eta_*\),
\begin{align}
 \mathbb E\!\left[\exp\!\left\{\eta G(\mathcal X(T))
       +\frac{3\eta\lambda}{7}\int_0^T G(\mathcal X(s))\,\mathrm ds\right\}
       \middle|\mathcal F_0\right]
 &\leq2\exp\{\eta(G(\mathcal X(0))+C_0\lambda T)\},
 \label{eq:common-integrated-exp}\\
 \mathbb E\!\left[e^{\eta G(\mathcal X(T))}\middle|\mathcal F_0\right]
 &\leq C_E\exp\{\eta e^{-3\lambda T/7}G(\mathcal X(0))\}.
 \label{eq:common-endpoint-exp}
\end{align}
In the splitting case the integral in \eqref{eq:common-integrated-exp} is over
the interpolation \eqref{eq:ou-interpolation}.  The corresponding conditional
estimates hold after restarting at any fixed time \(s\) in the exact
case and at any \(t_j\) in the splitting case, with conditioning on
\(\mathcal F_s\) and \(\mathcal F_{t_j}\), respectively.
\end{proposition}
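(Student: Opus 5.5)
The plan is to reduce both cases to Itô-formula computations along Ornstein--Uhlenbeck-type dynamics. The key observation is that along the exact solution the KdV vector field \(F\) contributes nothing to \(\mathrm dI_2^+\) or \(\mathrm dI_6^+\), because these functionals are conserved by \(\mathcal K_t\) by \eqref{eq:modified-invariance}; hence \(\langle DI^+(v),F(v)\rangle=0\) for smooth \(v\), and by density in the regularized equation. Consequently the drift of \(I^+(\mathcal X)\) in both cases is
\[
 -\lambda\langle DI^+(\mathcal X),\mathcal X\rangle+\tfrac12\operatorname{Tr}\bigl(D^2I^+(\mathcal X)\Phi\Phi^*\bigr).
\]
For the splitting interpolation \eqref{eq:ou-interpolation}, on each \((t_j,t_{j+1}]\) the process solves \(\mathrm d\widetilde U_h=-\lambda\widetilde U_h\,\mathrm ds+\Phi\,\mathrm dW\), so Itô's formula gives exactly this drift, and the jumps at \(t_j\) leave \(I_2^+,I_6^+\) (hence \(G\)) unchanged. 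Thus both processes satisfy the same semimartingale inequalities, and I only need to derive these once, following \cite[Section~7]{GHMR2024}. I will not need \(F\) at all in the splitting case, which is why the constants are independent of \(h\).

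\textbf{Polynomial bound.} Since each \(I^+\) is a polynomial functional of degree \(d\) in \(v\) with a homogeneous structure (the leading quadratic term plus lower-order terms carrying more factors of \(v\), corrected by \((1+\|v\|^2)^{q}\)), the scaling term satisfies
\[
 \langle DI_6^+(v),v\rangle\ge I_6^+(v)-C
\]
after choosing \(\bar\alpha_6,\bar q_6\) large; this is presumably the content of \eqref{eq:I6-coefficient-bounds}. The Itô trace term is bounded by \(C(\Phi)(I_6^+)^{1-\delta}\) via interpolation and coercivity \eqref{eq:I6-coercive}, using \(\Phi\in L_2(\mathfrak U,H^6_0)\). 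Applying Itô to \((I_6^+)^p\), the quadratic variation term is likewise of the form \((I_6^+)^{p-\delta'}\). Young's inequality, absorbing into \(\lambda p(I_6^+)^p\) with \(\lambda\ge1\), gives
\[
 \mathrm d\,\mathbb E(I_6^+)^p\le-\lambda p\,\mathbb E(I_6^+)^p+C_p\lambda.
\]
Gronwall then yields \eqref{eq:common-polynomial-bound} with \(C_p\) independent of \(\lambda\), since the inhomogeneous term integrates to \(C_p(1-e^{-\lambda pT})\). Localization by stopping times handles the martingale part.

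\textbf{Exponential bounds.} For \(G=(I_2^+)^{3/7}\), I will use \eqref{eq:G-semimartingale-bounds}, presumably of the form
\[
 \mathrm dG\le\bigl(-\tfrac{3\lambda}{7}G+C_0\lambda\bigr)\,\mathrm ds+\mathrm dM,\qquad \mathrm d\langle M\rangle\le C\|\Phi\|_{L_2(\mathfrak U,H^2)}^2\,G\,\mathrm ds.
\]
Here the factor \(3/7\) comes from \(\langle DI_2^+,v\rangle\ge\frac73 I_2^+-C\), by degree counting of \((1+\|v\|^2)^{7/3}\), together with the chain rule. The bound on \(\langle M\rangle\) is linear in \(G\) because \(|DI_2^+\Phi|^2\lesssim (I_2^+)^{2-6/7}\cdot\)(const), which is exactly why the power \(3/7\) was chosen.

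Then apply Itô to \(\exp\{\eta G(\mathcal X(t))+\frac{3\eta\lambda}{7}\int_0^tG\,\mathrm ds-\eta C_0\lambda t\}\). The drift is bounded by \(\eta\,e^{(\cdot)}\bigl(\tfrac{\eta}{2}\,\mathrm d\langle M\rangle/\mathrm dt\bigr)\), and it is nonpositive once \(\eta\le\eta_*\) is small enough that \(\frac\eta2 C\|\Phi\|^2 G\le\frac{3\lambda}{14}G\); I keep half of the \(G\)-damping for this purpose. The result is a supermartingale, and Fatou after localization gives \eqref{eq:common-integrated-exp}; the factor \(2\) leaves room for constants.

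For \eqref{eq:common-endpoint-exp}, I will use a time-dependent weight \(\eta(t)=\eta e^{-3\lambda(T-t)/7}\le\eta\) and apply Itô to \(\exp\{\eta(t)G(\mathcal X(t))\}\). The \(\eta'(t)G\) term exactly cancels the \(-\frac{3\lambda}7\eta(t)G\) drift up to the martingale correction, which is again absorbed for \(\eta\le\eta_*\) using the remaining damping. The constant term contributes \(\exp\{\int_0^T C_0\lambda\eta(t)\,\mathrm dt\}\le\exp\{7C_0\eta/3\}=:C_E\). The conditional restarted versions follow from the Markov property and identical arguments started at \(s\) or \(t_j\).

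\textbf{Main obstacle.} I expect the main obstacle to be rigorously justifying Itô's formula for \(I_6^+\) along the exact solution, since \(H^6\) gives only \(H^3\) drift, and verifying the \(\lambda\)-uniform coefficient bounds on the trace and cross-variation terms. For the former I would rely on the Galerkin/regularization scheme of \cite[Proposition~2.6]{GHMR2024}. In the splitting case this difficulty disappears, because the interpolation is an OU process with the exact invariance across jumps.
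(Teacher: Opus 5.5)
\paragraph{There is a gap in the damping bookkeeping.} The architecture is sound: KdV invariance removes $F$ from the scalar evolution, It\^o's formula applies on the Ornstein--Uhlenbeck pieces, $I_2^+$ and $I_6^+$ are continuous across the jumps, and localization plus Fatou finishes. The quantitative core does not close, however. In every estimate, the damping rate you extract is \emph{exactly} the rate in the conclusion, so nothing is left to absorb the It\^o corrections.

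For $G$ you posit $\mathrm dG\le(-\frac{3\lambda}{7}G+C_0\lambda)\,\mathrm ds+\mathrm dM$. Then the exponent $\eta G(\mathcal X(t))+\frac{3\eta\lambda}{7}\int_0^tG\,\mathrm ds-\eta C_0\lambda t$ already has drift $\le0$ with the whole damping spent. The It\^o term $\frac{\eta^2}{2}\,\mathrm d\langle M\rangle\le\frac{\eta^2}{2}C_{\mathrm{qv}}G\,\mathrm ds$ is nonnegative and has nothing to be absorbed into. There is no ``half of the damping'' left over; keeping half would only give the bound with $\frac{3\eta\lambda}{14}\int_0^TG$.

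The endpoint argument fails the same way. There $\eta'(t)G=\frac{3\lambda}{7}\eta(t)G$ cancels all of the damping, so you would obtain $e^{-3\lambda T/14}$ instead of $e^{-3\lambda T/7}$. The polynomial step has the same defect. From $\langle DI_6^+(v),v\rangle\ge I_6^+(v)-C$, the drift of $(I_6^+)^p$ is at most $-\lambda p(I_6^+)^p$ plus nonnegative terms of order $(I_6^+)^{p-\delta}$. Absorbing these leaves only the rate $(1-\varepsilon)\lambda p$, not $e^{-\lambda pT}$. These constants matter: the factor $e^{-3\lambda s/7}$ is used verbatim later, in the identity $\theta_se^{-3\lambda s/7}=2ae^{-3\lambda h/7}$ in Lemma~\ref{lem:moving-ou-weight}.

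\paragraph{The missing input is the correct scaling inequality.} Your justification ``$\langle DI_2^+(v),v\rangle\ge\frac73I_2^+(v)-C$ by degree counting'' is false. The leading term $\int(\partial_x^2v)^2$ is $2$-homogeneous, so for $\|v\|_{L^2}\le1$ and $\|\partial_x^2v\|_{L^2}\to\infty$ one has $\langle DI_2^+(v),v\rangle\approx2I_2^+(v)$. Even if it held, the chain rule would give rate $\lambda$, not $\frac37\lambda$.

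What is true is $\langle DI_2^+(v),v\rangle\ge(2-\varepsilon)I_2^+(v)-C_\varepsilon$. The cubic term satisfies $|\int v(\partial_xv)^2|\le\varepsilon\|\partial_x^2v\|_{L^2}^2+C_\varepsilon\|v\|_{L^2}^{14/3}$. For $\bar\alpha_2$ large, this is dominated up to a constant by the surplus $\frac{14}{3}\|v\|_{L^2}^2(1+\|v\|_{L^2}^2)^{4/3}-2(1+\|v\|_{L^2}^2)^{7/3}$ coming from the correction. The same homogeneity count applies to $I_6^+$, whose leading part $\int(\partial_x^6v)^2$ is again $2$-homogeneous.

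Consequently:
\begin{itemize}
\item $I_6^+$ is damped at rate $(2-\varepsilon)\lambda$. This is the content of $\mathcal A_6\le\varepsilon\lambda I_6^++C_\varepsilon\lambda$, and it leaves a spare $\lambda p(I_6^+)^p$ for absorption.
\item $G$ is damped at rate $\frac37(2-2\varepsilon)\lambda$, which gives $\mathfrak b\le-\frac{5\lambda}{7}G+C_0\lambda$. The surplus $\frac{2\lambda}{7}G$ absorbs $\frac{\eta}{2}C_{\mathrm{qv}}G$ once $\eta C_{\mathrm{qv}}\le\frac47$, using $\lambda\ge1$.
\end{itemize}
Your quadratic-variation count is also off. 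The correction alone makes $\sum_k|DI_2^+(v)[\Phi e_k]|^2$ of order $(I_2^+)^{11/7}$, and it is $(I_2^+)^{-8/7}(I_2^+)^{11/7}=G$ that yields linearity in $G$.

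\paragraph{With this drift bound your routes work, and they differ mildly from the paper's.}
\begin{itemize}
\item For \eqref{eq:common-integrated-exp}, your direct supermartingale argument gives constant $1$. The paper instead uses the exponential martingale inequality plus tail integration, which gives a factor $\le2$.
\item For \eqref{eq:common-endpoint-exp}, your time-dependent weight $\eta e^{-3\lambda(T-t)/7}$ replaces the paper's weighted martingale $\widehat M_T$. It is the same device the paper uses in Lemma~\ref{lem:moving-ou-weight}.
\end{itemize}
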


\begin{proof}
\medskip\noindent\emph{Step 1: stopped process and It\^o's formula.} Let
\(\tau_R\) be the first exit time of \(\mathcal X\) from the \(H^6\)-ball of
radius \(R\). We first work up to \(\tau_R\). The regularization argument in
\cite[Section~3.1]{GHMR2024} justifies the evolution formula below for the
exact equation with \(H^6_0\) initial data and
\(\Phi\in L_2(\mathfrak U,H^6_0)\). The KdV flow preserves \(I_6\) and its
\(L^2\)-correction, so
its contribution to the evolution of \(I_6^+\) vanishes.  On each interval
\((t_j,t_{j+1}]\), the interpolation process satisfies
\[
 \mathrm d\mathcal X=-\lambda\mathcal X\,\mathrm ds+\Phi\,\mathrm dW.
\]
This is an \(H^6\)-valued semimartingale, so It\^o's formula applies
directly on each Ornstein--Uhlenbeck interval.  Both cases therefore give the
same scalar evolution formula:
\[
 \mathrm dI_6^+(\mathcal X)+2\lambda I_6^+(\mathcal X)\,\mathrm ds
 =\mathcal A_6(\mathcal X)\,\mathrm ds+\mathcal B_6(\mathcal X)\,\mathrm dW.
\]
For \(v\in H^6_0\), the coefficients are
\[
 \begin{split}
 \mathcal A_6(v)
 &=-\lambda DI_6^+(v)[v]+2\lambda I_6^+(v)
   +\frac12\sum_kD^2I_6^+(v)[\Phi e_k,\Phi e_k],\\
 \mathcal B_6(v)&=(DI_6^+(v)[\Phi e_k])_k.
 \end{split}
\]
Here \((e_k)\) is an orthonormal basis of \(\mathfrak U\), and
\(DI_6^+(v)[w]\) denotes the Fr\'echet derivative of \(I_6^+\) at \(v\),
applied to the direction \(w\).  Similarly,
\(D^2I_6^+(v)[w_1,w_2]\) denotes its second Fr\'echet derivative applied to
the two directions \(w_1,w_2\).  In particular,
\(-\lambda DI_6^+(v)[v]\) is the contribution of the damping term.
The scalar evolution formula avoids treating the full exact equation as an
\(H^6\)-valued semimartingale, since \(F(\mathcal X)\) is in general only
\(H^3\)-valued.

The coefficient estimates are obtained using integration by parts,
one-dimensional Gagliardo--Nirenberg interpolation, and Young's inequality;
the correction in \(I_6^+\) controls the remaining powers of
\(\|v\|_{L^2}\). Using the corresponding coefficient estimates from
\cite[Lemma~3.5 and the proof of Theorem~3.1]{GHMR2024} with \(m=6\), and
using \(\lambda\geq1\), we obtain, for every \(\varepsilon>0\),
\begin{equation}
 \mathcal A_6(v)\leq\varepsilon\lambda I_6^+(v)+C_\varepsilon\lambda,
 \qquad
 \|\mathcal B_6(v)\|_{L_2(\mathfrak U,\mathbb R)}^2
 \leq\varepsilon\lambda\bigl(I_6^+(v)\bigr)^2+C_\varepsilon\lambda,
 \label{eq:I6-coefficient-bounds}
\end{equation}
where \(C_\varepsilon\) is independent of \(\lambda\geq1\).
In the splitting case, \eqref{eq:modified-invariance} ensures that both
modified invariants are continuous across the KdV steps.  Their scalar
evolution formulas therefore combine across the intervals without an
additional term.

\medskip\noindent\emph{Step 2: polynomial moment bounds.} Since
\(I_6^+\geq1\), It\^o's formula applies to \((I_6^+)^p\) for every real
\(p>0\).  For \(0<p\leq1\), the second-order term is nonpositive and can be
discarded in the upper bound.  For \(p>1\), the coefficient estimates
\eqref{eq:I6-coefficient-bounds} and Young's inequality absorb the lower
powers into the damping term.  Choosing \(\varepsilon\) according to \(p\), we
obtain
\[
 \mathrm d\bigl(I_6^+(\mathcal X)\bigr)^p
 +\lambda p\bigl(I_6^+(\mathcal X)\bigr)^p\,\mathrm ds
 \leq C_p\lambda\,\mathrm ds+\mathrm dM_p(s),
\]
where \(M_p\) is a continuous local martingale.
Applying the integrating factor up to \(T\wedge\tau_R\) and taking
conditional expectation gives
\[
 \begin{split}
 &\mathbb E\!\left[
 e^{\lambda p(T\wedge\tau_R)}
 \bigl(I_6^+(\mathcal X(T\wedge\tau_R))\bigr)^p
 \middle|\mathcal F_0\right]\\
 &\qquad\leq\bigl(I_6^+(\mathcal X(0))\bigr)^p
       +C_p\lambda\int_0^T e^{\lambda ps}\,\mathrm ds.
 \end{split}
\]
Global existence allows \(R\to\infty\).  Conditional Fatou's lemma, followed
by multiplication by \(e^{-\lambda pT}\), proves
\eqref{eq:common-polynomial-bound}: the source term becomes
\((C_p/p)(1-e^{-\lambda pT})\), which is bounded uniformly in \(T\) and in
\(\lambda\geq1\).

\medskip\noindent\emph{Step 3: exponential moment bounds.}
For \(I_2^+\), the coefficient estimates in \cite[Lemma~7.3]{GHMR2024}
bound the drift correction above by \((\lambda/100)I_2^++C\lambda\), while
the full noise coefficient satisfies
\[
 \left(\sum_k|DI_2^+(v)[\Phi e_k]|^2\right)^{1/2}
 \leq C\bigl(I_2^+(v)\bigr)^{11/14}.
\]
Here \(C\) depends on \(\|\Phi\|_{L_2(\mathfrak U,H^2)}\).
We apply It\^o's formula to \(G=(I_2^+)^{3/7}\). The chain rule
\(DG=\frac37(I_2^+)^{-4/7}DI_2^+\) therefore gives
\[
 \left(\sum_k|DG(v)[\Phi e_k]|^2\right)^{1/2}
 \leq C\bigl(I_2^+(v)\bigr)^{-4/7+11/14}
 =C\bigl(I_2^+(v)\bigr)^{3/14}.
\]
The resulting scalar evolution and coefficient bounds are
\begin{equation}
 \begin{aligned}
 \mathrm dG(\mathcal X(s))&=\mathfrak b(s)\,\mathrm ds+\mathrm dM(s),\\
 \mathfrak b(s)&\leq-\frac{5\lambda}{7}G(\mathcal X(s))+C_0\lambda,\\
 |\mathfrak b(s)|&\leq C\lambda(1+G(\mathcal X(s))),\\
 \mathrm d[M](s)&=\sum_k|DG(\mathcal X(s))[\Phi e_k]|^2\,\mathrm ds
       \leq C_{\mathrm{qv}}G(\mathcal X(s))\,\mathrm ds.
 \end{aligned}
 \label{eq:G-semimartingale-bounds}
\end{equation}
Here \(M\) is a continuous local martingale and \([M]\) denotes its
quadratic variation.  The upper bound on \(\mathfrak b\) uses the
nonpositive second-order term in It\^o's formula for \((I_2^+)^{3/7}\).
The bound on \(|\mathfrak b|\) follows by estimating each drift term in
absolute value, using the same coefficient formulas.

Choose \(\theta_0>0\) so that \(\theta_0C_{\mathrm{qv}}/2\leq2/7\), and set
\(\eta_*=\theta_0/2\).  Integrating the scalar inequality and retaining
\(3\lambda/7\) of the damping on the left gives
\[
 \begin{split}
 G(\mathcal X(T))+\frac{3\lambda}{7}\int_0^T G(\mathcal X(s))\,\mathrm ds
 \leq{}&G(\mathcal X(0))+C_0\lambda T\\
 &+M(T)-\frac{2\lambda}{7}\int_0^T G(\mathcal X(s))\,\mathrm ds.
 \end{split}
\]
Since \(\lambda\geq1\), the last term absorbs
\(\theta_0[M](T)/2\).  The exponential martingale inequality at parameter
\(\theta_0\), followed by integration of its tail bound, proves
\eqref{eq:common-integrated-exp}; the factor is at most
\(1+\eta/(\theta_0-\eta)\leq2\) for \(0\leq\eta\leq\eta_*\).

For the endpoint exponential-moment estimate, fix \(T\) and introduce the
continuous local martingale
\[
 \widehat M_T(r)=\int_0^r e^{-3\lambda(T-s)/7}\,\mathrm dM(s),
 \qquad 0\leq r\leq T.
\]
Multiplying the scalar inequality by
\(e^{-3\lambda(T-s)/7}\) and integrating gives
\[
 \begin{split}
 &G(\mathcal X(T))+\frac{2\lambda}{7}\int_0^T
 e^{-3\lambda(T-s)/7}G(\mathcal X(s))\,\mathrm ds\\
 &\qquad\leq e^{-3\lambda T/7}G(\mathcal X(0))
 +C_0\lambda\int_0^T e^{-3\lambda(T-s)/7}\,\mathrm ds+\widehat M_T(T).
 \end{split}
\]
The deterministic contribution is bounded independently of \(T\) and
\(\lambda\geq1\), since
\[
 \lambda\int_0^T e^{-3\lambda(T-s)/7}\,\mathrm ds
 =\frac73(1-e^{-3\lambda T/7})\leq\frac73.
\]
Moreover,
\[
 \begin{split}
 [\widehat M_T](T)
 &=\int_0^T e^{-6\lambda(T-s)/7}\,\mathrm d[M](s)\\
 &\leq C_{\mathrm{qv}}\int_0^T
 e^{-3\lambda(T-s)/7}G(\mathcal X(s))\,\mathrm ds,
 \end{split}
\]
where we used \(e^{-6\lambda(T-s)/7}\leq e^{-3\lambda(T-s)/7}\).
The positive integral term again absorbs the quadratic-variation term in
the exponential martingale inequality.  Integrating the resulting tail
bound for \(0\leq\eta\leq\eta_*\) yields
\eqref{eq:common-endpoint-exp}, with \(C_E\) independent of \(T\) and
\(\lambda\geq1\).

These exponential estimates are first obtained for stopped scalar processes;
conditional Fatou's lemma removes the stopping times.  In the splitting case,
each KdV step leaves the invariants unchanged, so the scalar estimates cover
the whole interpolation process.  Finally, the same arguments apply after the
stated restart times, since the future Wiener increments are independent of
the conditioning sigma-algebra.
\end{proof}

Combining \eqref{eq:common-polynomial-bound} and
\eqref{eq:common-endpoint-exp} yields uniform moment bounds with exponential weights.

\begin{corollary}[Uniform moment bounds with exponential weights]
\label{cor:mixed-moments}
Let \(q\geq0\), \(0<\eta<\eta_*\), and choose H\"older-conjugate exponents
\(p,p'>1\) such that \(\eta p'\leq\eta_*\).  If
\[
 \mathbb E\left[\bigl(I_6^+(U^0)\bigr)^{qp}\right]<\infty,
 \qquad \mathbb E[e^{\eta p'G(U^0)}]<\infty,
\]
then
\begin{equation}
 \begin{aligned}
 &\sup_{h>0}\sup_{n\geq0}
 \mathbb E\left[e^{\eta G(U^n)}
       \bigl(1+\bigl(I_6^+(U^n)\bigr)^q\bigr)\right]<\infty,\\
 &\sup_{h>0}\sup_{n\geq0}
 \mathbb E\left[e^{\eta G(U^n)}(1+\|U^n\|_{H^6}^{2q})\right]<\infty.
 \end{aligned}
 \label{eq:mixed-uniform}
\end{equation}
\end{corollary}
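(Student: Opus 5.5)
The plan is to condition on \(\mathcal F_0\), apply Hölder's inequality with the exponents \(p,p'\), and then use the polynomial bound \eqref{eq:common-polynomial-bound} and the endpoint exponential bound \eqref{eq:common-endpoint-exp} from Proposition~\ref{prop:uniform-lyapunov} in the splitting case \(\mathcal X(nh)=U^n\). Fix \(h>0\) and \(n\geq0\). Conditional Hölder gives
\[
 \mathbb E\!\left[e^{\eta G(U^n)}\bigl(I_6^+(U^n)\bigr)^q\middle|\mathcal F_0\right]
 \leq\Bigl(\mathbb E\!\left[e^{\eta p'G(U^n)}\middle|\mathcal F_0\right]\Bigr)^{1/p'}
 \Bigl(\mathbb E\!\left[\bigl(I_6^+(U^n)\bigr)^{qp}\middle|\mathcal F_0\right]\Bigr)^{1/p}.
\]
Since \(\eta p'\leq\eta_*\), \eqref{eq:common-endpoint-exp} bounds the first factor by \(C_E^{1/p'}\exp\{\eta e^{-3\lambda nh/7}G(U^0)\}\leq C_E^{1/p'}e^{\eta G(U^0)}\), where we use \(G\geq1\geq0\). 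If \(q>0\), \eqref{eq:common-polynomial-bound} with exponent \(qp\) bounds the second factor by \(\bigl((I_6^+(U^0))^{qp}+C_{qp}\bigr)^{1/p}\). The case \(q=0\) is trivial: the second factor equals one and the weight alone is handled as below.

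Next we take full expectations and apply unconditional Hölder with the same exponents. This gives
\[
 \mathbb E\!\left[e^{\eta G(U^n)}\bigl(I_6^+(U^n)\bigr)^q\right]
 \leq C_E^{1/p'}\bigl(\mathbb E[e^{\eta p'G(U^0)}]\bigr)^{1/p'}
 \bigl(\mathbb E[(I_6^+(U^0))^{qp}]+C_{qp}\bigr)^{1/p}.
\]
The right-hand side is finite by hypothesis and independent of \(h\) and \(n\). The constants \(C_E\) and \(C_{qp}\) do not depend on \(h\), \(T\), or \(\lambda\geq1\), and neither does the initial state. The term with the factor \(1\) is bounded in the same way by \(C_E\,\mathbb E[e^{\eta G(U^0)}]\leq C_E(\mathbb E[e^{\eta p'G(U^0)}])^{1/p'}\) via Jensen's inequality. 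This proves the first line of \eqref{eq:mixed-uniform}.

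For the second line, the coercivity \eqref{eq:I6-coercive} gives \(\|v\|_{H^6}^2\leq c^{-1}I_6^+(v)\). Hence \(1+\|U^n\|_{H^6}^{2q}\leq C(1+(I_6^+(U^n))^q)\), and the second line follows from the first.

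The only point needing care is the measurability and integrability behind the conditional bounds: Proposition~\ref{prop:uniform-lyapunov} requires \(U^0\) to be \(H^6_0\)-valued and \(\mathcal F_0\)-measurable. I would state this standing assumption explicitly. I would also note that conditional Hölder is legitimate for nonnegative variables even without a priori integrability, so no truncation is needed. I expect no real obstacle beyond this bookkeeping, because all the analytic work is already contained in the proposition.
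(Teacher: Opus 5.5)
Your proposal is correct and is essentially the paper's argument: H\"older with exponents \(p,p'\), the polynomial bound \eqref{eq:common-polynomial-bound} at order \(qp\), the endpoint bound \eqref{eq:common-endpoint-exp} at parameter \(\eta p'\leq\eta_*\) (with the assumed initial moments making the bounds uniform in \(h\) and \(n\)), and coercivity \eqref{eq:I6-coercive} for the \(H^6\) line. The only cosmetic difference is that you apply H\"older first conditionally on \(\mathcal F_0\) and then again unconditionally, whereas the paper applies it once to the unconditional expectation and bounds each factor by integrating the conditional estimates.
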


\begin{proof}
For \(q>0\), H\"older's inequality gives
\[
 \begin{split}
 &\mathbb E\left[e^{\eta G(U^n)}
       \bigl(1+\bigl(I_6^+(U^n)\bigr)^q\bigr)\right]\\
 &\qquad\leq
 \left(\mathbb E\left[\bigl(1+\bigl(I_6^+(U^n)\bigr)^q\bigr)^p\right]\right)^{1/p}
 \left(\mathbb E[e^{\eta p'G(U^n)}]\right)^{1/p'}.
 \end{split}
\]
The first factor is controlled by \eqref{eq:common-polynomial-bound} with
moment order \(qp\), and the second by \eqref{eq:common-endpoint-exp}, since
\(\eta p'\leq\eta_*\).  The initial moment assumptions make both bounds
uniform in \(h\) and \(n\).  The case \(q=0\), where the polynomial factor is
constant, requires only \eqref{eq:common-endpoint-exp}.  The second bound in
\eqref{eq:mixed-uniform} follows from \eqref{eq:I6-coercive}.
\end{proof}
\section{Strong approximation}
\label{sec:strong-convergence}

In this section, we compare the exact solution with the splitting
approximation.  Section~4.1 estimates the local error, decomposing it into its conditional
mean and its zero-mean part; the two enter the
global estimates in different ways.  Section~4.2 then proves strong order
\(1\) on every fixed time interval, and Section~4.3 establishes the same order
uniformly in time.

\subsection{Local error estimates}
\label{sec:local-error}

Recall the one-step map \eqref{eq:lie-map}.  Fix a deterministic initial state
\(x\), and drive the exact solution and the splitting approximation over one
time step by the same Wiener process.  Set
\[
 X_s=\mathcal S^\omega_{s,0}x,
 \qquad V_s=\mathcal K_sx,
 \qquad
 \delta_h(x)=\mathcal S^\omega_{h,0}x-\mathcal L^\omega_{h,0}x.
\]
Here \(\mathbb E\) denotes expectation over the Wiener increments, with
the initial state \(x\) fixed.  Decompose the local error into its mean and its zero-mean part:
\[
 \bar\delta_h(x)=\mathbb E[\delta_h(x)],
 \qquad \delta_h^\circ(x)=\delta_h(x)-\bar\delta_h(x).
\]
For an \(\mathcal F_t\)-measurable, \(H^6_0\)-valued random initial state
\(\chi\), define the corresponding local error by
\[
 \delta_{h,t}(\chi)=\mathcal S^\omega_{t+h,t}\chi
                    -\mathcal L^\omega_{t+h,t}\chi.
\]
Whenever \(\delta_{h,t}(\chi)\) is integrable, set
\[
 \bar\delta_{h,t}(\chi)
 =\mathbb E[\delta_{h,t}(\chi)\mid\mathcal F_t],
 \qquad
 \delta_{h,t}^\circ(\chi)
 =\delta_{h,t}(\chi)-\bar\delta_{h,t}(\chi).
\]

Throughout the remainder of this subsection, assume
\begin{equation}
 x\in H^6_0,
 \qquad \Phi\in L_2(\mathfrak U,H^6_0),
 \qquad \lambda\geq1,
 \qquad 0<h\leq h_0.
 \label{eq:local-moment-assumptions}
\end{equation}
The required \(H^6\) moment bounds on finite time intervals for the exact
solution follow from Proposition~\ref{prop:uniform-lyapunov}.  Constants may
depend on the fixed \(h_0,\lambda,\Phi\), and the indicated moment order, but
not on \(x\) or \(h\).

We use the elementary estimates
\begin{align}
 \|F(v)-F(w)\|_{L^2}
 &\leq C(1+\|v\|_{H^1}+\|w\|_{H^1})\|v-w\|_{H^3},
 \label{eq:F-local-Lipschitz}\\
 \|F(v)\|_{H^3}&\leq C(1+\|v\|_{H^6}^2).
 \label{eq:F-H6-H3}
\end{align}
In \eqref{eq:F-H6-H3}, the Airy term \(\partial_x^3v\) is controlled in
\(H^3\) by the \(H^6\) norm, and the quadratic term by the one-dimensional
Sobolev algebra property, with no regularity beyond \(H^6\) needed.

\begin{proposition}[Local error estimates]
\label{prop:local-error-moments}
Assume \eqref{eq:local-moment-assumptions}.
For every integer \(p\geq1\), there exist \(q_p\geq0\) and
\(C_p<\infty\) such that
\begin{equation}
 \mathbb E[\|\delta_h(x)\|_{L^2}^{2p}]
 \leq C_ph^{3p}(1+I_6^+(x))^{q_p}.
 \label{eq:local-error-moment}
\end{equation}
In particular,
\begin{align}
 \mathbb E[\|\delta_h(x)\|_{L^2}^{2}]
 &\leq Ch^{3}(1+(I_6^+(x))^3),
 \label{eq:local-error-second-moment}\\
 \mathbb E[\|\delta_h(x)\|_{L^2}^{4}]
 &\leq Ch^{6}(1+(I_6^+(x))^6).
 \label{eq:local-error-fourth-moment}
\end{align}
There exists \(q\geq0\) such that
\begin{equation}
 \|\bar\delta_h(x)\|_{L^2}
 \leq Ch^2(1+I_6^+(x))^q.
 \label{eq:local-error-mean}
\end{equation}
Moreover, \(\mathbb E[\delta_h^\circ(x)]=0\) and
\begin{equation}
 \mathbb E[\|\delta_h^\circ(x)\|_{L^2}^{2p}]
 \leq C_ph^{3p}(1+I_6^+(x))^{q_p}.
 \label{eq:zero-mean-local-error}
\end{equation}
The mean estimate and the moment bounds for the zero-mean part also hold
conditionally.  Let \(\chi\) be an
\(H^6_0\)-valued, \(\mathcal F_t\)-measurable random state satisfying
\begin{equation}
 \mathbb E[(1+I_6^+(\chi))^{q+q_p}]<\infty.
 \label{eq:conditional-moment-assumption}
\end{equation}
Then, almost surely,
\begin{align}
 \|\bar\delta_{h,t}(\chi)\|_{L^2}
 &\leq Ch^2(1+I_6^+(\chi))^q,
 \label{eq:conditional-local-mean}\\
 \mathbb E[\delta_{h,t}^\circ(\chi)\mid\mathcal F_t]&=0,
 \label{eq:conditional-local-centering}\\
 \mathbb E[\|\delta_{h,t}^\circ(\chi)\|_{L^2}^{2p}
             \mid\mathcal F_t]
 &\leq C_ph^{3p}(1+I_6^+(\chi))^{q_p}.
 \label{eq:conditional-local-moment}
\end{align}
\end{proposition}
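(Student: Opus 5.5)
The plan is to write the local error through a Duhamel-type comparison and then separate the stochastic first-order part, whose mean vanishes, from the second-order remainder. Put \(Y_s=\mathcal O^\omega_{s,0}(V_h)\) for \(s\in[0,h]\), so that \(Y_h=\mathcal L^\omega_{h,0}x\). Introduce the intermediate process \(Z_s=e^{-\lambda s}V_s+\int_0^s e^{-\lambda(s-r)}\Phi\,\mathrm dW(r)\), which differs from \(Y_s\) only through \(V_s\) versus \(V_h\). The exact solution satisfies
\[
X_h=e^{-\lambda h}x+\int_0^h e^{-\lambda(h-s)}F(X_s)\,\mathrm ds+\int_0^h e^{-\lambda(h-s)}\Phi\,\mathrm dW(s).
\]
The splitting output satisfies
\[
\mathcal L^\omega_{h,0}x=e^{-\lambda h}x+e^{-\lambda h}\int_0^h F(V_s)\,\mathrm ds+\int_0^h e^{-\lambda(h-s)}\Phi\,\mathrm dW(s).
\]
The stochastic convolutions are identical, so they cancel. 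This leaves
\[
\delta_h(x)=\int_0^h\bigl(e^{-\lambda(h-s)}-e^{-\lambda h}\bigr)F(V_s)\,\mathrm ds+\int_0^h e^{-\lambda(h-s)}\bigl(F(X_s)-F(V_s)\bigr)\,\mathrm ds .
\]
In the first term the prefactor is \(O(\lambda s)\). By \eqref{eq:F-H6-H3} and invariance \eqref{eq:modified-invariance}, \(\|F(V_s)\|_{L^2}\le C(1+I_6^+(x))\). So the first term is deterministic and of size \(O(h^2)(1+I_6^+(x))\).

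For the second term I use \eqref{eq:F-local-Lipschitz}, which requires controlling \(\|X_s-V_s\|_{H^3}\). Writing \(X_s-V_s\) via Duhamel again gives
\[
X_s-V_s=\int_0^s\bigl(F(X_r)-F(V_r)\bigr)\,\mathrm dr-\lambda\int_0^s X_r\,\mathrm dr+\int_0^s\Phi\,\mathrm dW(r).
\]
I do not attempt a Gronwall argument in \(H^3\), because it loses derivatives. Instead I bound \(\|X_s-V_s\|_{H^3}\) directly by the integrals of \(\|F(X_r)\|_{H^3}+\|F(V_r)\|_{H^3}\) and \(\lambda\|X_r\|_{H^3}\), plus \(\|\Phi W(s)\|_{H^3}\). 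Each of these is controlled by \(H^6\) moments, using \eqref{eq:F-H6-H3} together with the polynomial bound \eqref{eq:common-polynomial-bound} for the exact solution and invariance for \(V\). This yields
\[
\mathbb E\|X_s-V_s\|_{H^3}^{2p}\le C_p s^{p}(1+I_6^+(x))^{q}.
\]
Integrating over \([0,h]\) with Hölder/Minkowski then gives \(\mathbb E\|\delta_h\|^{2p}\le C_ph^{3p}(\cdots)\), which is \eqref{eq:local-error-moment}. The special cases \eqref{eq:local-error-second-moment} and \eqref{eq:local-error-fourth-moment} follow by tracking the exponents. The zero-mean bound \eqref{eq:zero-mean-local-error} follows from \(\|\bar\delta_h\|\le (\mathbb E\|\delta_h\|^{2p})^{1/2p}\) and the triangle inequality.

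The main obstacle is the mean estimate \eqref{eq:local-error-mean}, since the crude bound above is only \(h^{3/2}\). Here I linearize \(F(X_s)-F(V_s)=DF(V_s)[X_s-V_s]+R_s\), where \(R_s=-\tfrac12\partial_x((X_s-V_s)^2)\). Then \(\|R_s\|_{L^2}\le C\|X_s-V_s\|_{H^1}^2\), so \(\mathbb E\|R_s\|\le Cs(\cdots)\), and its contribution is \(O(h^2)\). In the linear part, \(X_s-V_s=\Phi W(s)+E_s\), where \(E_s\) collects the drift terms and satisfies \(\mathbb E\|E_s\|_{H^3}\le Cs(\cdots)\). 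The point is that \(DF(V_s)\) is deterministic and linear, so \(\mathbb E[DF(V_s)\Phi W(s)]=0\). The only loss is regularity: \(DF(V_s)\) maps \(H^3\to L^2\) with norm \(C(1+\|V_s\|_{H^1})\), which is fine. Hence \(\|\mathbb E[F(X_s)-F(V_s)]\|_{L^2}\le Cs(1+I_6^+(x))^q\), and integrating gives \(h^2\). The centering of \(\delta_h^\circ\) holds by definition.

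For the conditional statements I use the Markov structure. The increments of \(W\) after \(t\) are independent of \(\mathcal F_t\), and both maps are measurable functions of \((\chi,\text{increments})\). Hence \(\mathbb E[\Psi(\delta_{h,t}(\chi))\mid\mathcal F_t]=g(\chi)\), where \(g(y)=\mathbb E\Psi(\delta_h(y))\) (freezing lemma). Applying the deterministic bounds at \(y=\chi\) gives \eqref{eq:conditional-local-mean}–\eqref{eq:conditional-local-moment}. The integrability assumption \eqref{eq:conditional-moment-assumption} guarantees that the conditional expectations are well defined.
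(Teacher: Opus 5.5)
Your proposal is correct and follows essentially the same route as the paper. It uses the same variation-of-constants identity with cancelling stochastic convolutions, and the same direct (non-Gronwall) BDG and $H^6$-moment bound on $X_s-V_s$ in $H^3$ fed into \eqref{eq:F-local-Lipschitz}. It also uses the same quadratic expansion of $F$ around the deterministic $V_s$, so that the first-order noise term has zero mean and the mean bound is $O(h^2)$, and the same independence (freezing) argument for the conditional versions. The auxiliary processes $Y_s$ and $Z_s$ you introduce are never used and can be dropped.
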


\begin{proof}
The variation-of-constants formula for \eqref{eq:skdv} and the integral
formulation of the KdV flow give
\begin{equation}
 \begin{split}
 \delta_h(x)
 ={}&\int_0^h e^{-\lambda(h-s)}\{F(X_s)-F(V_s)\}\,\mathrm ds\\
 &+\int_0^h\{e^{-\lambda(h-s)}-e^{-\lambda h}\}F(V_s)\,\mathrm ds.
 \end{split}
 \label{eq:local-error-identity}
\end{equation}
The stochastic convolutions in the exact and splitting maps cancel.  For
\(H^6\) initial data, this identity holds in \(H^3\), and hence in \(L^2\).

Fix \(p\geq1\), and enlarge \(q_p\) as needed so that the polynomial moment
factors in these estimates are controlled by \((1+I_6^+(x))^{q_p}\).  In
\(H^3\), the increment formulas are

\begin{align}
 X_s-x&=\int_0^s\Phi\,\mathrm dW(r)
       +\int_0^s\{F(X_r)-\lambda X_r\}\,\mathrm dr,
 \label{eq:exact-H3-increment}\\
 V_s-x&=\int_0^sF(V_r)\,\mathrm dr.
 \label{eq:kdv-H3-increment}
\end{align}
We subtract \eqref{eq:kdv-H3-increment} from \eqref{eq:exact-H3-increment} and
estimate the two resulting terms.  The Burkholder--Davis--Gundy inequality
bounds the \(2p\)-moment of the stochastic integral by \(C_ps^p\).  Jensen's
inequality, \eqref{eq:F-H6-H3}, the \(H^6\) moment bounds of
Proposition~\ref{prop:uniform-lyapunov}, and \(I_6^+(V_s)=I_6^+(x)\) bound
that of the drift integrals by \(C_ps^{2p}(1+I_6^+(x))^{q_p}\).  Consequently,
\begin{equation}
 \mathbb E[\|X_s-V_s\|_{H^3}^{2p}]
 \leq C_ps^p(1+I_6^+(x))^{q_p}.
 \label{eq:X-V-increment}
\end{equation}
Applying H\"older's inequality to \eqref{eq:F-local-Lipschitz}, with
\eqref{eq:X-V-increment} at order \(4p\) and the \(H^1\) moment bounds, gives
\begin{equation}
 \mathbb E[\|F(X_s)-F(V_s)\|_{L^2}^{2p}]
 \leq C_ps^p(1+I_6^+(x))^{q_p}.
 \label{eq:F-difference-moment}
\end{equation}
Jensen's inequality and \eqref{eq:F-difference-moment} bound the
\(2p\)-moment of the first integral in \eqref{eq:local-error-identity} by
\[
 C_p h^{2p-1}\int_0^h s^p\,\mathrm ds\,(1+I_6^+(x))^{q_p}
 \leq C_ph^{3p}(1+I_6^+(x))^{q_p}.
\]
For the second integral, the estimate
\[
 |e^{-\lambda(h-s)}-e^{-\lambda h}|\leq\lambda s
\]
and the moment bound on \(F(V_s)\) give a \(2p\)-moment bounded by
\(C_ph^{4p}(1+I_6^+(x))^{q_p}\).  Combining the two bounds proves
\eqref{eq:local-error-moment}.  Tracking the polynomial factors for \(p=1\)
and \(p=2\) gives \eqref{eq:local-error-second-moment} and
\eqref{eq:local-error-fourth-moment}, respectively.

To prove \eqref{eq:local-error-mean}, enlarge
\(q\) if necessary and set \(\Delta_s=X_s-V_s\).  Subtracting
\eqref{eq:kdv-H3-increment} from \eqref{eq:exact-H3-increment} and taking
expectation removes the stochastic integral.  The drift moment bounds give
\begin{equation}
 \|\mathbb E[\Delta_s]\|_{H^3}\leq Cs(1+I_6^+(x))^q.
 \label{eq:mean-Delta}
\end{equation}
Since \(F\) is quadratic,
\begin{equation}
 F(V_s+\Delta_s)-F(V_s)
 =-\partial_x^3\Delta_s-\partial_x(V_s\Delta_s)
  -\frac12\partial_x(\Delta_s^2).
 \label{eq:quadratic-expansion}
\end{equation}
Since \(V_s\) is deterministic,
\(\mathbb E[V_s\Delta_s]=V_s\mathbb E[\Delta_s]\).  Thus the expectations
of the first two terms are \(O(s)\) in \(L^2\) by \eqref{eq:mean-Delta}
and the \(H^1\) bound on \(V_s\).  For the last one,
\[
 \left\|\partial_x\mathbb E[\Delta_s^2]\right\|_{L^2}
 \leq C\mathbb E[\|\Delta_s\|_{H^1}^2]
 \leq Cs(1+I_6^+(x))^q.
\]
Hence \(\|\mathbb E[F(X_s)-F(V_s)]\|_{L^2}\leq Cs(1+I_6^+(x))^q\).
Taking expectations in \eqref{eq:local-error-identity}, we estimate the first
integral in \(L^2\) using the bound for
\(\mathbb E[F(X_s)-F(V_s)]\), which gives
\[
 C(1+I_6^+(x))^q\int_0^h s\,\mathrm ds.
\]
For the second integral,
\(\lvert e^{-\lambda(h-s)}-e^{-\lambda h}\rvert\leq\lambda s\)
and the bound on \(F(V_s)\) give the same estimate.  Since
\(\int_0^h s\,\mathrm ds=h^2/2\), \eqref{eq:local-error-mean} follows.
The definition of \(\delta_h^\circ\), Jensen's
inequality, and \eqref{eq:local-error-moment} give
\eqref{eq:zero-mean-local-error}.  Conditionally on \(\mathcal F_t\), the
initial state \(\chi\) is fixed and the Wiener increments after time \(t\) are
independent of \(\mathcal F_t\).  By time homogeneity,
\(\bar\delta_{h,t}(\chi)\) therefore agrees almost surely with the
deterministic function \(\bar\delta_h\) evaluated at \(\chi\).  Applying the
same moment estimates with initial state \(\chi\) proves the conditional
bounds.
\end{proof}

Following \cite[(7.12)]{GHMR2024}, for \(a>0\), define the weighted
distance-like function on \(H^2_0\times H^2_0\) by
\begin{equation}
 d_a(y,z)=e^{a\{G(y)+G(z)\}}\|y-z\|_{L^2}.
 \label{eq:weighted-cost-a}
\end{equation}
It is continuous and symmetric, and vanishes if and only if \(y=z\), but is
not assumed to satisfy the triangle inequality.  It is therefore a distance-like function in
the sense of \cite{HairerMattinglyScheutzow2011}.  The following bounds
control the local error in the weighted strong and weak estimates.

\begin{lemma}[Local error estimates with exponential weights]
\label{lem:weighted-local}
Assume \eqref{eq:local-moment-assumptions}.  Let \(\eta_*\) be as in
Proposition~\ref{prop:uniform-lyapunov}.  If \(0<a\leq\eta_*/4\), then
\begin{align}
 &\mathbb E\left[
 e^{a\{G(\mathcal S^\omega_{h,0}x)+G(\mathcal L^\omega_{h,0}x)\}}
 \|\delta_h(x)\|_{L^2}^{2}\right]
 \leq Ch^3e^{2aG(x)}(1+(I_6^+(x))^3),
 \label{eq:weighted-local-second}\\
 &\mathbb E[d_a(\mathcal S^\omega_{h,0}x,\mathcal L^\omega_{h,0}x)]
 \leq Ch^{3/2}e^{2aG(x)}(1+(I_6^+(x))^{3/2}).
 \label{eq:weighted-local-first}
\end{align}
The constants are independent of \(x\) and uniform for \(0<h\leq h_0\);
they may depend on \(a,h_0,\lambda\), and \(\Phi\).  Both estimates also hold
conditionally.  For the random state \(\chi\) in
Proposition~\ref{prop:local-error-moments}, almost surely,
\begin{align*}
 &\mathbb E\left[
 e^{a\{G(\mathcal S^\omega_{t+h,t}\chi)
       +G(\mathcal L^\omega_{t+h,t}\chi)\}}
 \|\delta_{h,t}(\chi)\|_{L^2}^{2}
 \mathrel{\Big|}\mathcal F_t\right]
 \leq Ch^3e^{2aG(\chi)}(1+(I_6^+(\chi))^3),\\
 &\mathbb E\left[
 d_a(\mathcal S^\omega_{t+h,t}\chi,
     \mathcal L^\omega_{t+h,t}\chi)
 \mathrel{\Big|}\mathcal F_t\right]
 \leq Ch^{3/2}e^{2aG(\chi)}(1+(I_6^+(\chi))^{3/2}).
\end{align*}
\end{lemma}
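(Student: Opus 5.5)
The plan is to combine H\"older's inequality with the local moment bounds of Proposition~\ref{prop:local-error-moments} and the one-step exponential moment bound \eqref{eq:common-endpoint-exp} of Proposition~\ref{prop:uniform-lyapunov}, applied separately to the exact solution and to the splitting approximation started from \(x\).

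\emph{Splitting the weight.} For \eqref{eq:weighted-local-second}, first apply Cauchy--Schwarz to the product of the two weights and \(\|\delta_h(x)\|_{L^2}^2\):
\[
 \mathbb E\bigl[e^{a\{G(\mathcal S^\omega_{h,0}x)+G(\mathcal L^\omega_{h,0}x)\}}\|\delta_h(x)\|_{L^2}^2\bigr]
 \leq \Bigl(\mathbb E\bigl[e^{2a\{G(\mathcal S^\omega_{h,0}x)+G(\mathcal L^\omega_{h,0}x)\}}\bigr]\Bigr)^{1/2}
 \Bigl(\mathbb E\|\delta_h(x)\|_{L^2}^4\Bigr)^{1/2}.
\]
Apply Cauchy--Schwarz once more to the first factor, which reduces it to \(\mathbb E[e^{4aG(\mathcal S^\omega_{h,0}x)}]\) and \(\mathbb E[e^{4aG(\mathcal L^\omega_{h,0}x)}]\).

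\emph{Bounding the exponential factors.} Since \(4a\leq\eta_*\), \eqref{eq:common-endpoint-exp} with \(T=h\) bounds each of these factors by \(C_E\exp\{4ae^{-3\lambda h/7}G(x)\}\leq C_Ee^{4aG(x)}\). This holds in both cases: in the splitting case, \(T=h\) corresponds to \(n=1\), and the interpolation gives \(\mathcal X(h)=\mathcal L^\omega_{h,0}x\). Taking square roots twice, the weight factor is at most \(C e^{2aG(x)}\). The second factor is controlled by \eqref{eq:local-error-fourth-moment}, which gives \(Ch^3(1+(I_6^+(x))^6)^{1/2}\leq Ch^3(1+(I_6^+(x))^3)\). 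Together these prove \eqref{eq:weighted-local-second}.

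\emph{First-moment bound.} For \eqref{eq:weighted-local-first}, apply Cauchy--Schwarz to \(\mathbb E[d_a]\), splitting it into the same weight factor and \((\mathbb E\|\delta_h(x)\|_{L^2}^2)^{1/2}\). The latter is at most \(Ch^{3/2}(1+(I_6^+(x))^3)^{1/2}\leq Ch^{3/2}(1+(I_6^+(x))^{3/2})\) by \eqref{eq:local-error-second-moment}. The weight factor is now \((\mathbb E e^{2a\{G+G\}})^{1/2}\), which is again at most \(Ce^{2aG(x)}\) by the same argument, this time with exponent \(4a\leq\eta_*\) after the inner Cauchy--Schwarz.

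\emph{Conditional versions.} These follow exactly as in Proposition~\ref{prop:local-error-moments}. Conditionally on \(\mathcal F_t\), the state \(\chi\) is frozen and the future Wiener increments are independent of \(\mathcal F_t\). By time homogeneity the conditional expectations equal the deterministic functions above evaluated at \(\chi\). The restarted versions of \eqref{eq:common-endpoint-exp} stated in Proposition~\ref{prop:uniform-lyapunov} supply the conditional exponential bounds. The constants from Proposition~\ref{prop:local-error-moments} depend on \(h_0,\lambda,\Phi\) but not on \(x\); \(C_E\) is uniform in \(h\). So all constants are uniform in \(0<h\leq h_0\) and depend only on the allowed quantities.

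The main obstacle is bookkeeping: checking that the exponent budget \(4a\leq\eta_*\) covers the two nested Cauchy--Schwarz steps, and that the polynomial factors come out exactly as \((I_6^+)^3\) and \((I_6^+)^{3/2}\). Measurability of \(G\) composed with the flows, needed for the conditional statements, requires only continuity of \(G\) on \(H^2\) and of the solution maps.
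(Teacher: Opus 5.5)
Your proposal is correct and matches the paper's proof: your two nested Cauchy--Schwarz steps are exactly the paper's H\"older inequality with exponents \(2,4,4\), followed by the endpoint exponential estimate \eqref{eq:common-endpoint-exp} at exponent \(4a\leq\eta_*\) for both the exact and splitting endpoints, and then the local moment bounds \eqref{eq:local-error-fourth-moment} and \eqref{eq:local-error-second-moment}, respectively. The conditional versions follow, as in the paper, by freezing \(\chi\), using independence of the future Wiener increments, and applying the restarted Lyapunov estimates.
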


\begin{proof}
H\"older's inequality with exponents \(2,4,4\) yields
\[
 \begin{split}
 &\mathbb E\left[e^{a(G(\mathcal S^\omega_{h,0}x)
 +G(\mathcal L^\omega_{h,0}x))}
 \|\delta_h(x)\|_{L^2}^2\right]\\
 &\quad\leq
 (\mathbb E[\|\delta_h(x)\|_{L^2}^4])^{1/2}
 (\mathbb E[e^{4aG(\mathcal S^\omega_{h,0}x)}])^{1/4}
 (\mathbb E[e^{4aG(\mathcal L^\omega_{h,0}x)}])^{1/4}.
 \end{split}
\]
Applying Proposition~\ref{prop:uniform-lyapunov} to the exact and splitting
endpoints bounds the product of the last two factors by \(Ce^{2aG(x)}\).  The
fourth moment bound \eqref{eq:local-error-fourth-moment} proves
\eqref{eq:weighted-local-second}.  Using
\eqref{eq:local-error-second-moment} instead gives
\eqref{eq:weighted-local-first}.  Conditioning on
\(\mathcal F_t\) and applying the same H\"older estimate proves the
conditional versions.
\end{proof}

\subsection{Strong convergence on finite time intervals}
\label{sec:finite-strong}

Estimates \eqref{eq:local-error-mean} and \eqref{eq:zero-mean-local-error}
bound the mean of the local error by \(O(h^2)\) and its zero-mean part by
\(O(h^{3/2})\) in the root-mean-square norm. These are the local orders
required for strong order \(1\) in Milstein's mean-square convergence theorem
\cite{MilsteinTretyakov2004}. The Burgers nonlinearity, however, loses one
spatial derivative and hence is not locally Lipschitz in \(L^2\), so the
standard mean-square convergence theory does not apply directly. We overcome
this difficulty by combining the continuous dependence estimate with the
exponential Lyapunov bounds and separate estimates for the conditional mean
and zero-mean part of the local error. This proves first-order strong
convergence.

We first establish the continuous dependence estimate used in both the
finite-time and uniform-in-time proofs. Here, ``one-sided'' refers to the
dependence of the growth factor on only one of the two initial states.

\begin{lemma}[One-sided continuous dependence estimate for the KdV flow]
\label{lem:one-sided-kdv-stability}
There is a deterministic constant \(C_{\mathrm K}>0\) such that, for
\(x,y\in H^6_0\) and \(0<h\leq1\),
\begin{equation}
 \|\mathcal K_hx-\mathcal K_hy\|_{L^2}^2
 \leq e^{C_{\mathrm K}h(1+G(y))}\|x-y\|_{L^2}^2.
 \label{eq:one-sided-kdv-stability}
\end{equation}
\end{lemma}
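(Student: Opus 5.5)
We estimate the $L^2$ energy of the difference $w(t)=\mathcal K_tx-\mathcal K_ty$ on $[0,h]$ and close with Gronwall's inequality. The growth factor will depend on $y$ alone because we expand the nonlinearity around the $y$-trajectory and exploit the exact conservation of $G$ along it.

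Set $v(t)=\mathcal K_tx$ and $z(t)=\mathcal K_ty$, both in $C([0,h];H^6_0)$ by global well-posedness. The difference satisfies
\[
 \partial_tw+\partial_x^3w+\tfrac12\partial_x\bigl((v+z)w\bigr)=0 .
\]
Write $v+z=2z+w$, so that
\[
 \partial_tw+\partial_x^3w+\partial_x(zw)+\tfrac12\partial_x(w^2)=0 .
\]
Pair this with $w$ in $L^2$. For $H^6$ data, $w\in C^1([0,h];H^3)$, so the computation is rigorous without regularization.

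Periodicity handles each term. The Airy term gives $\int w\,\partial_x^3w\,\mathrm dx=0$. The cubic term gives $\int w\,\partial_x(w^2)\,\mathrm dx=\tfrac23\int\partial_x(w^3)\,\mathrm dx=0$. The transport term gives $\int w\,\partial_x(zw)\,\mathrm dx=\tfrac12\int(\partial_xz)w^2\,\mathrm dx$. This is the point of the expansion: the term that is quadratic in $w$ and involves the $x$-trajectory drops out completely, and only a coefficient built from $z$ survives. Hence
\[
 \frac{\mathrm d}{\mathrm dt}\|w\|_{L^2}^2=-\int_{\mathbb T}(\partial_xz)\,w^2\,\mathrm dx\le\|\partial_xz(t)\|_{L^\infty}\|w\|_{L^2}^2 .
\]

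Now use \eqref{eq:G-spatial-bound}, which holds since $z(t)\in H^2_0$, to get $\|\partial_xz(t)\|_{L^\infty}\le C\,G(z(t))$. By \eqref{eq:modified-invariance}, $I_2^+(z(t))=I_2^+(y)$, so $G(z(t))=G(y)$ for every $t$. Gronwall's inequality then yields
\[
 \|w(h)\|_{L^2}^2\le e^{ChG(y)}\|x-y\|_{L^2}^2\le e^{C_{\mathrm K}h(1+G(y))}\|x-y\|_{L^2}^2
\]
with $C_{\mathrm K}=C$. The constant is deterministic and independent of $h$, so the restriction $h\le1$ is not actually needed.

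The only delicate step is keeping the estimate one-sided. A naive symmetric treatment of $\tfrac12\partial_x((v+z)w)$ produces $\|\partial_x v\|_{L^\infty}+\|\partial_x z\|_{L^\infty}$. Writing $v=z+w$ and observing that the pure $w^3$ contribution integrates to zero is what removes any dependence on $x$.
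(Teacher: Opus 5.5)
Your proof is correct and follows the paper's argument. The paper likewise reduces the symmetric difference energy identity to $\frac{\mathrm d}{\mathrm dt}\|w\|_{L^2}^2=-\int_{\mathbb T}(\partial_xz)\,w^2\,\mathrm dx$ via $\int_{\mathbb T}(\partial_xw)\,w^2\,\mathrm dx=0$, which is the same cancellation as your vanishing cubic term, and then bounds the coefficient by $CG(y)$ using \eqref{eq:G-spatial-bound} and conservation of $I_2^+$ before applying Gronwall; your added justification of regularity and your remark that $h\leq1$ is unnecessary are both accurate.
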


\begin{proof}
Let \(v_s=\mathcal K_sx\), \(\widetilde v_s=\mathcal K_sy\), and
\(w_s=v_s-\widetilde v_s\).  The difference energy identity can be
written in terms of the second trajectory, since
\[
 \int_{\mathbb T}\{\partial_xv_s-\partial_x\widetilde v_s\}w_s^2\,\mathrm dx
 =\int_{\mathbb T}(\partial_xw_s)w_s^2\,\mathrm dx=0.
\]
Integration by parts gives
\[
 \frac{\mathrm d}{\mathrm ds}\|w_s\|_{L^2}^2
 =-\int_{\mathbb T}(\partial_x\widetilde v_s)w_s^2\,\mathrm dx
 \leq\|\partial_x\widetilde v_s\|_{L^\infty}\|w_s\|_{L^2}^2.
\]
Estimate \eqref{eq:G-spatial-bound} and KdV conservation give
\[
 \|\partial_x\widetilde v_s\|_{L^\infty}
 \leq CG(\widetilde v_s)=CG(y).
\]
Gronwall's inequality proves \eqref{eq:one-sided-kdv-stability}.
\end{proof}

\begin{theorem}[Finite-time strong convergence]
\label{thm:finite-strong-order-one}
Assume \(u_0\in H^6_0\),
\(\Phi\in L_2(\mathfrak U,H^6_0)\), and that the exact solution and the
splitting approximation satisfy \(u(0)=U^0=u_0\) and are driven by the same
Wiener process.  There is
\(\lambda_{\mathrm{fin}}=\lambda_{\mathrm{fin}}(\Phi)\geq1\) such that,
for every fixed \(\lambda\geq\lambda_{\mathrm{fin}}\), there is
\(h_0=h_0(\lambda,\Phi)>0\) for which
\begin{equation}
 \max_{0\leq nh\leq T}
 \mathbb E[\|u(t_n)-U^n\|_{L^2}^2]\leq C_Th^2,
 \qquad 0<h\leq h_0,
 \label{eq:finite-strong-order-one}
\end{equation}
for every \(T<\infty\).  The constant may depend on
\(T,u_0,\lambda,\Phi\), but not on \(h\) or \(n\) with \(nh\leq T\).
\end{theorem}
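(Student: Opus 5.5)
Let \(e_n=u(t_n)-U^n\). The plan is to obtain a one-step recursion for a weighted mean-square error and close it with a discrete Gronwall argument on \([0,T]\). Write
\[
 e_{n+1}=\bigl(\mathcal S^\omega_{t_{n+1},t_n}u(t_n)-\mathcal L^\omega_{t_{n+1},t_n}u(t_n)\bigr)
 +\bigl(\mathcal L^\omega_{t_{n+1},t_n}u(t_n)-\mathcal L^\omega_{t_{n+1},t_n}U^n\bigr)
 =\delta_{h,t_n}(u(t_n))+e^{-\lambda h}\bigl(\mathcal K_hu(t_n)-\mathcal K_hU^n\bigr).
\]
The stochastic convolutions cancel in the second bracket, so it is \(\mathcal F_{t_n}\)-measurable. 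Denote it by \(A_n\). By Lemma~\ref{lem:one-sided-kdv-stability}, applied with the growth factor placed on the exact trajectory \(y=u(t_n)\),
\[
\|A_n\|_{L^2}^2\le e^{-2\lambda h+C_{\mathrm K}h(1+G(u(t_n)))}\|e_n\|_{L^2}^2 .
\]
Expanding the square and conditioning on \(\mathcal F_{t_n}\), the cross term with the zero-mean part \(\delta^\circ_{h,t_n}\) vanishes by \eqref{eq:conditional-local-centering}. The cross term with the mean \(\bar\delta_{h,t_n}\) is bounded by \(h\|A_n\|^2+h^{-1}\|\bar\delta\|^2\), and \eqref{eq:conditional-local-mean} makes \(h^{-1}\|\bar\delta\|^2=O(h^3)\). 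Together with \eqref{eq:conditional-local-moment} for \(p=1\), this gives
\[
\mathbb E[\|e_{n+1}\|^2\mid\mathcal F_{t_n}]\le e^{-2\lambda h+Ch(1+G(u(t_n)))}(1+h)\|e_n\|^2+Ch^3(1+I_6^+(u(t_n)))^{q'} .
\]

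The obstacle is the random, unbounded factor \(e^{ChG(u(t_n))}\): taking expectations does not decouple it from \(\|e_n\|^2\). I would absorb it with the exponential Lyapunov structure. Define the weighted quantity
\[
\mathcal E_n=\exp\Bigl\{-\kappa\int_0^{t_n}G(u(s))\,\mathrm ds\Bigr\}\|e_n\|^2 ,
\]
or more simply exploit the dissipation in \eqref{eq:common-integrated-exp}. Since \(G\) along the exact solution changes little over one step, \(hG(u(t_n))\approx\int_{t_n}^{t_{n+1}}G(u)\,\mathrm ds\). Iterating the recursion then gives
\[
\|e_n\|^2\lesssim\sum_k\exp\Bigl\{C\int_{t_k}^{t_n}(1+G(u))\,\mathrm ds\Bigr\}\cdot(\text{local terms at }k).
\]
The factor \(e^{-2\lambda h}\) is what lets \(\lambda\) large help. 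Taking expectations and applying H\"older, the exponential factor is controlled by \eqref{eq:common-integrated-exp} provided \(C\le 3\eta\lambda/7\) with \(\eta\le\eta_*\). This is precisely where \(\lambda\ge\lambda_{\mathrm{fin}}(\Phi)\) is needed: \(\eta_*\) depends only on \(\Phi\), so \(\lambda\) must satisfy \(C_{\mathrm K}\lesssim\eta_*\lambda\). The remaining constant \(e^{\eta(G(u_0)+C_0\lambda T)}\) is finite for fixed \(T\) and deterministic \(u_0\).

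To make the iteration rigorous, the conditional recursion cannot simply be iterated, because the weights are random and non-adapted in the future. Instead I would unroll the pathwise identity. Write \(e_n=\sum_k\Pi_{k,n}\,\delta_{h,t_k}\), where \(\Pi_{k,n}\) is the composed difference map. Its Lipschitz constant is bounded pathwise by \(\prod_j e^{-\lambda h+C_{\mathrm K}h(1+G(\cdot))/2}\), with \(G\) evaluated along the exact trajectory, using a one-sided estimate whose growth factor is always placed on the exact trajectory (the continuous-dependence argument on each step). I would then split \(\delta=\bar\delta+\delta^\circ\). The mean parts sum to \(n\cdot O(h^2)=O(h)\) after Cauchy--Schwarz. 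The zero-mean parts form martingale-difference-like increments, and their sum has square \(O(n h^3)=O(h^2)\). Since the weights \(\Pi\) are not adapted, the cleaner route is likely a stopping/localization argument: on the event \(\{\int_0^T G(u)\,\mathrm ds\le R\}\) the weights are bounded by \(e^{C(T+R)}\), and the complement has probability \(\le e^{-\eta R/\ldots}\) by \eqref{eq:common-integrated-exp}. Combined with the fourth-moment bounds \eqref{eq:local-error-fourth-moment} and the polynomial bounds \eqref{eq:common-polynomial-bound} for \(\|e_n\|\), this should yield \(C_Th^2\).

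The step I expect to be hardest is exactly this decoupling of the non-adapted exponential growth weight from the martingale structure of \(\delta^\circ\). I would first try the weighted-recursion version, multiplying \(\|e_{n}\|^2\) by \(\exp\{-\kappa\sum_{j<n}hG(u(t_j))\}\), which is \(\mathcal F_{t_n}\)-adapted. That keeps the conditional cancellation intact, and the weight is removed at the end via H\"older and \eqref{eq:common-integrated-exp}. In that final H\"older step, the \(h^2\) rate survives only up to the needed power, so it may require \(L^{2p}\) control of the weighted error for some \(p>1\).
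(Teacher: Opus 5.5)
Your final paragraph is the paper's proof. The paper multiplies the conditional recursion by the \(\mathcal F_{t_n}\)-measurable weight \(\Gamma_{n+1}=\prod_{j\le n}[(1+C_ph)e^{pC_{\mathrm K}h(1+G(u(t_j)))}]^{-1}\), with the growth factor on \(u\) as you set it up. It then removes the weight by H\"older against an exponential Lyapunov bound, which is where \(C_{\mathrm K}\lesssim\eta_*\lambda\) enters. However, the two ingredients you leave as ``may require'' and ``\(\approx\)'' are the load-bearing steps.

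\textbf{First gap: \(p>1\) is forced.} From \(\mathbb E[\Gamma_n\|e_n\|_{L^2}^2]\le C_Th^2\) you cannot get \(\mathbb E\|e_n\|_{L^2}^2\le C_Th^2\). The factor \(\Gamma_n^{-1}\) is unbounded, and interpolating against the a priori \(O(1)\) moments of \(e_n\) gives only \(h^{2\vartheta}\) with \(\vartheta<1\). You need \(\mathbb E[\Gamma_n\|e_n\|_{L^2}^{2p}]\le C_Th^{2p}\), hence a recursion for the \(2p\)-th power. Set \(z=A_n+\bar\delta_{h,t_n}\) and \(\xi=\delta^\circ_{h,t_n}\), and use \(\mathbb E_n\|z+\xi\|^{2p}\le\|z\|^{2p}+C_p\|z\|^{2p-2}\mathbb E_n\|\xi\|^2+C_p\mathbb E_n\|\xi\|^{2p}\); the linear term vanishes by \eqref{eq:conditional-local-centering}. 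Young's inequality then makes the \(O(h^2)\) mean and the \(O(h^3)\) conditional variance each contribute a source \(h^{4p}h^{-(2p-1)}=h^{3p}h^{-(p-1)}=h^{2p+1}\), while \(\|A_n\|^{2p}\) picks up only a factor \(1+C_ph\). The propagation factor becomes \(e^{pC_{\mathrm K}h(1+G)}\), and the conjugate weight \(\Gamma_n^{-1/(p-1)}\) has exponent \(\frac{p}{p-1}C_{\mathrm K}h\sum_jG(u(t_j))\). That constant, not \(C_{\mathrm K}\), is what \(\lambda_{\mathrm{fin}}\) must dominate.

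\textbf{Second gap: the grid sum.} \eqref{eq:common-integrated-exp} controls \(\int_0^TG(u(s))\,\mathrm ds\), not the grid sum \(h\sum_jG(u(t_j))\) in your adapted weight. Choosing the grid sum is right: the continuous-integral weight you first mention is not \(\mathcal F_{t_n}\)-measurable at the step \(n\to n+1\) and would correlate with \(\delta^\circ\). But ``\(G\) changes little over one step'' cannot be used as stated. \(G\) is an unbounded semimartingale with drift of size \(\lambda(1+G)\) and quadratic variation of size \(G\). The paper proves the grid-sum bound \eqref{eq:gridpoint-exponential-moment} separately:
\begin{itemize}
\item it writes \(hG(u(t_j))=\int_{t_j}^{t_{j+1}}G\,\mathrm ds-\int_{t_j}^{t_{j+1}}(t_{j+1}-s)\,\mathrm dG\);
\item the drift part is at most \(C\lambda h(T+\int_0^TG)\);
\item the martingale part \(R\) satisfies \([R]\le Ch^2\int_0^TG\);
\item an exponential supermartingale plus Cauchy--Schwarz reduce everything to \eqref{eq:common-integrated-exp} when \(h\le c_0/\lambda\) and \(\theta\le c_1\eta_*\lambda\).
\end{itemize}
This is where the step restriction \(h_0(\lambda,\Phi)\) in the statement comes from, and your sketch does not produce it. Also, large \(\lambda\) does not help through \(e^{-2\lambda h}\); the paper simply bounds that by \(1\). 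It helps only through the \(\lambda\)-proportional dissipation of \(G\).

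\textbf{Drop the unrolled/localized alternative.} It would not give the sharp rate, for three reasons.
\begin{itemize}
\item Norm bounds on composed nonlinear difference maps destroy the cancellation of \(\delta^\circ\) and give only \(nh^{3/2}=O(h^{1/2})\).
\item In any telescoping decomposition, the growth factors sit on restarted hybrid trajectories, not on \(u\).
\item The error on \(\{\int_0^TG>R\}\) is a priori only \(O(1)\). Making that set contribute \(O(h^2)\) forces \(R\sim\lambda^{-1}\log(1/h)\), so \(e^{C_{\mathrm K}R}\) is a negative power of \(h\) and the sharp order is lost.
\end{itemize}
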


\begin{proof}
Write \(\mathbb E_n[\,\cdot\,]=
\mathbb E[\,\cdot\mid\mathcal F_{t_n}]\) and
\(e_n=u(t_n)-U^n\).  Then \(e_0=0\).  Set
\[
 p_n=\mathcal L^\omega_{t_{n+1},t_n}u(t_n)
     -\mathcal L^\omega_{t_{n+1},t_n}U^n,
 \qquad
 \delta_{n+1}=\delta_{h,t_n}(u(t_n)),
\]
\[
 \bar\delta_{n+1}=\mathbb E_n[\delta_{n+1}],
 \qquad
 \delta_{n+1}^\circ=\delta_{n+1}-\bar\delta_{n+1}.
\]
The one-step error decomposition is
\begin{equation}
 e_{n+1}=p_n+\delta_{n+1}
 =p_n+\bar\delta_{n+1}+\delta_{n+1}^\circ.
 \label{eq:finite-error-decomposition}
\end{equation}

\medskip\noindent\emph{Step 1: conditional local error estimates.}
Fix an integer \(p>1\).  After increasing \(q_p\) if necessary,
Proposition~\ref{prop:local-error-moments} gives
\begin{align}
 \|\bar\delta_{n+1}\|_{L^2}
 &\leq Ch^2(1+I_6^+(u(t_n)))^{q_p},
 \label{eq:finite-local-mean}\\
 \mathbb E_n[\|\delta_{n+1}^\circ\|_{L^2}^{2p}]
 &\leq Ch^{3p}(1+I_6^+(u(t_n)))^{2pq_p}.
 \label{eq:finite-local-centered-moment}
\end{align}
Conditional Jensen's inequality also gives
\begin{equation}
 \mathbb E_n[\|\delta_{n+1}^\circ\|_{L^2}^2]
 \leq\bigl(\mathbb E_n[\|\delta_{n+1}^\circ\|_{L^2}^{2p}]\bigr)^{1/p}
 \leq Ch^3(1+I_6^+(u(t_n)))^{2q_p}.
 \label{eq:finite-local-centered-two}
\end{equation}
The polynomial Lyapunov estimate in Proposition~\ref{prop:uniform-lyapunov}
implies
\begin{equation}
 \sup_{0<h\leq h_0}\sup_{nh\leq T}
 \mathbb E[(1+I_6^+(u(t_n)))^{2pq_p}]<\infty.
 \label{eq:finite-local-source-moment}
\end{equation}

\medskip\noindent\emph{Step 2: one-step pathwise stability.}
Applying Lemma~\ref{lem:one-sided-kdv-stability} with \(x=U^n\) and
\(y=u(t_n)\) puts the coefficient on the exact solution.  Since the stochastic
convolution cancels in the difference between the two splitting updates,
\[
 p_n=e^{-\lambda h}\{\mathcal K_hu(t_n)-\mathcal K_hU^n\}
\]
is \(\mathcal F_{t_n}\)-measurable and satisfies
\begin{equation}
 \|p_n\|_{L^2}^2
 \leq e^{-2\lambda h}e^{C_{\mathrm K}h(1+G(u(t_n)))}\|e_n\|_{L^2}^2,
 \label{eq:finite-propagated-stability}
\end{equation}
where \(C_{\mathrm K}\) is the deterministic constant of
Lemma~\ref{lem:one-sided-kdv-stability}, independent of \(n\), \(h\), and the
initial state.

\medskip\noindent\emph{Step 3: exponential integrability of the discrete
Lyapunov sum.}
The product of the growth factors in
\eqref{eq:finite-propagated-stability} involves
\(h\sum_jG(u(t_j))\) in its exponent.
For the exact solution, \eqref{eq:G-semimartingale-bounds} gives
\[
 \mathrm dG(u(t))=\mathfrak b(t)\,\mathrm dt+\mathrm dM(t),\qquad
 |\mathfrak b(t)|\leq C\lambda(1+G(u(t))),\qquad
 \mathrm d[M](t)\leq CG(u(t))\,\mathrm dt.
\]
Here \(M\) is a continuous local martingale and \(\mathfrak b\) is the
drift coefficient.  For the exact solution started from a deterministic
state \(x\), there exist constants \(c_0,c_1>0\), depending only on
\(\Phi\), such that whenever
\[
 0<h\leq c_0\lambda^{-1},
 \qquad 0<\theta\leq c_1\eta_*\lambda
\]
one has, for every fixed \(T<\infty\),
\begin{equation}
 \sup_{Nh\leq T}\mathbb E\left[\exp\left\{\theta h\sum_{j=0}^{N-1}G(u(t_j))\right\}\right]
 \leq C_Te^{\eta_*G(x)}.
 \label{eq:gridpoint-exponential-moment}
\end{equation}
On \([t_j,t_{j+1})\), set
\(\omega_h(t)=t_{j+1}-t\).  Integration by parts on one interval gives
\[
 hG(u(t_j))=\int_{t_j}^{t_{j+1}}G(u(s))\,\mathrm ds
 -\int_{t_j}^{t_{j+1}}(t_{j+1}-s)\,\mathrm dG(u(s)).
\]
Summing over \(j=0,\ldots,N-1\) yields
\[
 h\sum_{j=0}^{N-1}G(u(t_j))-\int_0^{Nh}G(u(s))\,\mathrm ds
 =-\int_0^{Nh}\omega_h(s)\mathfrak b(s)\,\mathrm ds
  -\int_0^{Nh}\omega_h(s)\,\mathrm dM(s).
\]
Since \(0\leq\omega_h\leq h\), the drift term satisfies
\[
 \left|\int_0^{Nh}\omega_h(s)\mathfrak b(s)\,\mathrm ds\right|
 \leq h\int_0^{Nh}|\mathfrak b(s)|\,\mathrm ds
 \leq C\lambda h\left(T+\int_0^{Nh}G(u(s))\,\mathrm ds\right).
\]
For \(R(t)=-\int_0^t\omega_h(s)\,\mathrm dM(s)\), the quadratic variation satisfies
\[
 [R](Nh)=\int_0^{Nh}\omega_h(s)^2\,\mathrm d[M](s)
 \leq Ch^2\int_0^{Nh}G(u(s))\,\mathrm ds.
\]
Consequently,
\[
 h\sum_{j=0}^{N-1}G(u(t_j))
 \leq C\lambda hT+(1+C\lambda h)\int_0^{Nh}G(u(s))\,\mathrm ds+R(Nh).
\]
The exponential local martingale associated with \(2\theta R\) is a
positive supermartingale, so
\[
 \mathbb E[\exp\{2\theta R(Nh)-2\theta^2[R](Nh)\}]\leq1.
\]
Writing \(\theta R=(\theta R-\theta^2[R])+\theta^2[R]\) and applying
the Cauchy--Schwarz inequality gives
\[
 \begin{split}
 &\mathbb E\left[\exp\left\{\theta h\sum_{j=0}^{N-1}G(u(t_j))\right\}\right]\\
 &\quad\leq e^{C\theta\lambda hT}
 \left(\mathbb E\left[\exp\left\{
 2\theta(1+C\lambda h)\int_0^{Nh}G(u(s))\,\mathrm ds
 +2\theta^2[R](Nh)\right\}\right]\right)^{1/2}\\
 &\quad\leq e^{C\theta\lambda hT}
 \left(\mathbb E\left[\exp\left\{
 [2\theta(1+C\lambda h)+C\theta^2h^2]
 \int_0^{Nh}G(u(s))\,\mathrm ds\right\}\right]\right)^{1/2}.
 \end{split}
\]

Since \(G\geq0\), the endpoint term can be dropped from the exponent in
\eqref{eq:common-integrated-exp}.  That estimate bounds the right-hand side
provided that
\[
 2\theta(1+C\lambda h)+C\theta^2h^2
 \leq\frac{3\eta_*\lambda}{7}.
\]
Decreasing \(c_0,c_1\) if necessary gives
\eqref{eq:gridpoint-exponential-moment}.

\medskip\noindent\emph{Step 4: high-moment error estimates.}
In \eqref{eq:finite-error-decomposition},
\(\mathbb E_n[\delta_{n+1}^\circ]=0\), so the linear term in a conditional
Taylor expansion vanishes.  For an
\(\mathcal F_{t_n}\)-measurable vector \(z\) and a random vector \(\xi\)
satisfying \(\mathbb E_n[\xi]=0\), that term is
\(2p\|z\|_{L^2}^{2p-2}\langle z,\mathbb E_n[\xi]\rangle=0\).
Bounding the second-order remainder for \(v\mapsto\|v\|_{L^2}^{2p}\) gives
\[
 \mathbb E_n[\|z+\xi\|_{L^2}^{2p}]
 \leq\|z\|_{L^2}^{2p}
 +C_p\|z\|_{L^2}^{2p-2}\mathbb E_n[\|\xi\|_{L^2}^2]
 +C_p\mathbb E_n[\|\xi\|_{L^2}^{2p}].
\]
Apply this estimate with \(z=p_n+\bar\delta_{n+1}\) and
\(\xi=\delta_{n+1}^\circ\).
Young's inequality with parameter \(h\) and
\eqref{eq:finite-local-mean} give the factor
\(h^{4p}h^{-(2p-1)}=h^{2p+1}\) for the mean term.
For the term containing the conditional second moment,
\eqref{eq:finite-local-centered-two} gives
\((h^3)^ph^{-(p-1)}=h^{2p+1}\).
By \eqref{eq:finite-local-centered-moment}, the remaining \(2p\)-moment term
is bounded by \(C_ph^{2p+1}(1+I_6^+(u(t_n)))^{2pq_p}\), since
\(3p\geq2p+1\) for integer \(p>1\) and \(h\leq1\).
Combining these bounds gives
\[
 \mathbb E_n[\|e_{n+1}\|_{L^2}^{2p}]
 \leq(1+C_ph)\|p_n\|_{L^2}^{2p}
      +C_ph^{2p+1}(1+I_6^+(u(t_n)))^{2pq_p}.
\]
Substituting \eqref{eq:finite-propagated-stability} and using
\(e^{-2p\lambda h}\leq1\), we obtain
\begin{equation}
 \mathbb E_n[\|e_{n+1}\|_{L^2}^{2p}]
 \leq(1+C_ph)e^{pC_{\mathrm K}h(1+G(u(t_n)))}\|e_n\|_{L^2}^{2p}
 +C_ph^{2p+1}(1+I_6^+(u(t_n)))^{2pq_p}.
 \label{eq:finite-error-recurrence}
\end{equation}
Define
\[
 \Gamma_0=1,
 \qquad
 \Gamma_n=\prod_{j=0}^{n-1}
 \left[(1+C_ph)e^{pC_{\mathrm K}h(1+G(u(t_j)))}\right]^{-1}.
\]
Since \(\Gamma_{n+1}\) depends only on states at times \(t_j\leq t_n\),
it is \(\mathcal F_{t_n}\)-measurable.  The identity
\[
 \Gamma_{n+1}(1+C_ph)e^{pC_{\mathrm K}h(1+G(u(t_n)))}=\Gamma_n
\]
therefore gives, after multiplying \eqref{eq:finite-error-recurrence} by
\(\Gamma_{n+1}\),
\[
 \begin{split}
 \mathbb E_n[\Gamma_{n+1}\|e_{n+1}\|_{L^2}^{2p}]
 &\leq\Gamma_n\|e_n\|_{L^2}^{2p}\\
 &\quad+C_ph^{2p+1}\Gamma_{n+1}
 (1+I_6^+(u(t_n)))^{2pq_p}.
 \end{split}
\]
We take expectations and sum over \(j=0,\ldots,n-1\).  Since
\(e_0=0\), \(0<\Gamma_{n+1}\leq1\), and \(nh\leq T\), the polynomial
moment bound \eqref{eq:finite-local-source-moment} yields
\begin{equation}
 \begin{split}
 \mathbb E[\Gamma_n\|e_n\|_{L^2}^{2p}]
 &\leq C_ph^{2p+1}\sum_{j=0}^{n-1}
 \mathbb E\!\left[\Gamma_{j+1}(1+I_6^+(u(t_j)))^{2pq_p}\right]\\
 &\leq C_Th^{2p}.
 \end{split}
 \label{eq:weighted-error-product}
\end{equation}
To recover the unweighted second moment from this weighted \(2p\)-moment
estimate, apply H\"older's inequality to obtain
\[
 \mathbb E[\|e_n\|_{L^2}^2]
 \leq\{\mathbb E[\Gamma_n\|e_n\|_{L^2}^{2p}]\}^{1/p}
 \{\mathbb E[\Gamma_n^{-1/(p-1)}]\}^{(p-1)/p}.
\]
For the second factor, observe that
\[
 \Gamma_n^{-1/(p-1)}
 =(1+C_ph)^{n/(p-1)}
 \exp\left\{\frac{pC_{\mathrm K}}{p-1}nh\right\}
 \exp\left\{\frac{pC_{\mathrm K}}{p-1}
 h\sum_{j=0}^{n-1}G(u(t_j))\right\}.
\]
The first two factors are bounded by \(C_T\) for \(nh\leq T\).
Since \(p/(p-1)\leq2\), choose \(\lambda_{\mathrm{fin}}\geq1\) so that
\(2C_{\mathrm K}\leq c_1\eta_*\lambda_{\mathrm{fin}}\), and take
\(h_0\leq\min\{1,c_0/\lambda\}\).
For \(\lambda\geq\lambda_{\mathrm{fin}}\),
\eqref{eq:gridpoint-exponential-moment} then gives
\(\mathbb E[\Gamma_n^{-1/(p-1)}]\leq C_T\).
Combining this bound with \eqref{eq:weighted-error-product} in
H\"older's inequality proves \eqref{eq:finite-strong-order-one}.
\end{proof}

\begin{remark}[Random initial data in Theorem~\ref{thm:finite-strong-order-one}]
\label{rem:finite-strong-random-data}
The same conclusion holds when \(u(0)=U^0=u_0\) almost surely, where
\(u_0\) takes values in \(H^6_0\), is \(\mathcal F_0\)-measurable, and satisfies
\[
 \mathbb E[(I_6^+(u_0))^p]<\infty\quad\hbox{for every finite }p,
 \qquad \mathbb E[e^{\eta_*G(u_0)}]<\infty.
\]
Only finitely many of these polynomial moments enter the proof.
\end{remark}

%
\subsection{Strong convergence uniformly in time}
\label{sec:uniform-strong}

We now establish a uniform-in-time mean-square error estimate with an
exponential weight.  We estimate the conditional mean and the zero-mean
part of each local error separately.  The exponential weight is evaluated
at \(U^n\), which also determines the coefficient in the one-sided continuous
dependence estimate.

Throughout this subsection, assume
\(\Phi\in L_2(\mathfrak U,H^6_0)\) and \(\lambda\geq1\).
Let \(C_0,C_{\mathrm{qv}}>0\) be as in
\eqref{eq:G-semimartingale-bounds}, and choose
\begin{equation}
 0<a\leq\min\left\{
 \frac{\eta_*}{8},\frac{1}{7C_{\mathrm{qv}}},\frac{1}{4C_0},1\right\}.
 \label{eq:common-long-time-weight}
\end{equation}
Along the exact solution and each Ornstein--Uhlenbeck step,
\eqref{eq:G-semimartingale-bounds} gives
\begin{equation}
 \begin{gathered}
 \mathrm dG=\mathfrak b\,\mathrm dt+\mathrm dM,
 \qquad
 \mathfrak b\leq-\frac{5\lambda}{7}G+C_0\lambda,
 \qquad |\mathfrak b|\leq C\lambda(1+G),\\
 \mathrm d[M](t)=q_t\,\mathrm dt,
 \qquad 0\leq q_t\leq C_{\mathrm{qv}}G.
 \end{gathered}
 \label{eq:G-semimartingale-uniform-proof}
\end{equation}

To retain the damping at each step, we need a more precise estimate than
\eqref{eq:common-endpoint-exp}.  The following lemma gives a bound with a
factor \(1+O(h)\) for fixed \(\lambda\) over a single Ornstein--Uhlenbeck step.

\begin{lemma}[Exponential weight estimates over an Ornstein--Uhlenbeck step]
\label{lem:moving-ou-weight}
Let \(v\) be an \(H^6_0\)-valued, \(\mathcal F_t\)-measurable random
state, and set
\[
 Y_s=\mathcal O^\omega_{t+s,t}(v),
 \qquad 0\leq s\leq h.
\]
Define
\begin{equation}
 \theta_s=2ae^{-3\lambda(h-s)/7},
 \qquad R_s=e^{\theta_sG(Y_s)}.
 \label{eq:moving-weight-definitions}
\end{equation}
In particular,
\[
 R_0=e^{2ae^{-3\lambda h/7}G(v)},
 \qquad R_h=e^{2aG(Y_h)}.
\]
For \(m=1,2\),
\begin{equation}
 \mathbb E\left[R_h^m\mid\mathcal F_t\right]
 \leq e^{2maC_0\lambda h}R_0^m.
 \label{eq:moving-endpoint-sharp}
\end{equation}
There is a polynomial \(P\) with nonnegative coefficients,
independent of \(t,h,v\), such that, for every fixed \(\lambda\geq1\) and
\(0<h\leq\lambda^{-1}\),
\begin{equation}
 \mathbb E\left[
 |R_h-R_0|^2\mid\mathcal F_t\right]
 \leq C_{a,\lambda}hR_0^2P(I_6^+(v)).
 \label{eq:moving-weight-increment}
\end{equation}
\end{lemma}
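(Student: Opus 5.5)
We apply It\^o's formula to the time-dependent exponential \(R_s=e^{\theta_sG(Y_s)}\) along the Ornstein--Uhlenbeck step, using the scalar semimartingale description \eqref{eq:G-semimartingale-uniform-proof}. On \([t,t+h]\), \(Y\) solves \(\mathrm dY=-\lambda Y\,\mathrm ds+\Phi\,\mathrm dW\), so \(\mathrm dG(Y_s)=\mathfrak b\,\mathrm ds+\mathrm dM\) with the bounds there. Since \(\dot\theta_s=\frac{3\lambda}{7}\theta_s\), we get, with \(\theta=\theta_s\) and \(G=G(Y_s)\),
\[
 \frac{\mathrm dR_s}{R_s}
 =\Bigl\{\frac{3\lambda}{7}\theta G+\theta\mathfrak b+\frac{\theta^2}{2}q_s\Bigr\}\mathrm ds+\theta\,\mathrm dM_s .
\]
We use \(\mathfrak b\leq-\frac{5\lambda}{7}G+C_0\lambda\) and \(q_s\leq C_{\mathrm{qv}}G\). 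The drift is then at most \(\theta C_0\lambda+\theta G\{\frac{3\lambda}{7}-\frac{5\lambda}{7}+\frac{\theta}{2}C_{\mathrm{qv}}\}\). Because \(\theta\leq 2a\leq 2/(7C_{\mathrm{qv}})\) and \(\lambda\geq1\), the bracket is at most \(-\frac{2\lambda}{7}+\frac{1}{7}<0\). Hence the drift is bounded by \(\theta_sC_0\lambda\leq 2aC_0\lambda\).

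For \(m=2\) we repeat the computation with \(R_s^2=e^{2\theta_sG}\). The quadratic-variation term becomes \(2\theta^2q_s\leq 4aC_{\mathrm{qv}}\theta G\), and the bracket is still at most \(-\frac{2\lambda}{7}+\frac{4}{7}\cdot\frac{1}{2}\leq0\) thanks to the \(\frac{1}{7C_{\mathrm{qv}}}\) constraint in \eqref{eq:common-long-time-weight}. This gives \(\mathrm d R_s^m\leq 2maC_0\lambda R_s^m\,\mathrm ds+\mathrm d(\text{local martingale})\). We localize with the \(H^6\) exit times \(\tau_R\), apply Gronwall to \(e^{-2maC_0\lambda s}R^m_{s\wedge\tau_R}\) under \(\mathbb E[\cdot\mid\mathcal F_t]\), and remove the localization by conditional Fatou. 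This yields \eqref{eq:moving-endpoint-sharp}. The integrability needed to pass from a local to a true supermartingale comes from this stopping argument, so no extra assumption is required.

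For \eqref{eq:moving-weight-increment}, we write \(R_h-R_0=\int_0^h R_s\{\ldots\}\mathrm ds+\int_0^h R_s\theta_s\,\mathrm dM_s\) with the exact (unsigned) drift. We bound \(|\mathfrak b|\leq C\lambda(1+G)\), so the drift integrand is bounded by \(C\lambda R_s(1+G(Y_s))\) and the martingale integrand has \(\mathrm d[\cdot]\leq CR_s^2G(Y_s)\,\mathrm ds\). Cauchy--Schwarz in time and the It\^o isometry, after localization, give
\[
 \mathbb E[|R_h-R_0|^2\mid\mathcal F_t]\leq C\int_0^h\mathbb E\bigl[R_s^2\{\lambda^2h(1+G(Y_s))^2+G(Y_s)\}\mid\mathcal F_t\bigr]\mathrm ds .
\]
The remaining task is to bound \(\mathbb E[R_s^2(1+G(Y_s))^2\mid\mathcal F_t]\) by \(CR_0^2P(I_6^+(v))\) uniformly for \(s\leq h\leq\lambda^{-1}\); integrating in \(s\) then yields the factor \(h\).

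We expect this last bound to be the main obstacle, because the weight and the polynomial factor must be controlled simultaneously without losing the factor \(R_0^2\). Plain H\"older would raise \(R_s\) to a power exceeding two, whose exponential moment is not controlled by \(R_0^2\) but by a higher power of \(e^{G(v)}\). We would avoid this by estimating the polynomial factor pathwise. On an Ornstein--Uhlenbeck step, \(Y_s-e^{-\lambda s}v\) is a Gaussian convolution \(Z_s\), independent of \(\mathcal F_t\), with all \(H^2\) moments bounded. Since \(G\) is comparable to \(\|\partial_x^2\cdot\|^{6/7}+1+\|\cdot\|_{L^2}^2\), we have \((1+G(Y_s))^2\leq C(1+G(v))^2(1+\|Z_s\|_{H^2})^{8}\), and \(G(v)\leq CI_6^+(v)\). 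It then suffices to bound \(\mathbb E[R_s^2(1+\|Z_s\|_{H^2})^8\mid\mathcal F_t]\). For this we rerun the It\^o/Gronwall argument of the first part for the product of \(R_s^2\) with the polynomial functional of \(Z_s\), or use H\"older with a slightly larger weight exponent. The latter is admissible because \(2a\leq\eta_*/4\) leaves room below \(\eta_*\), and on an interval of length \(h\leq\lambda^{-1}\) one has \(e^{-3\lambda h/7}\geq e^{-3/7}\). That lower bound converts the enlarged exponent back to at most \(C\,R_0^2e^{\epsilon G(v)}\). The leftover \(e^{\epsilon G(v)}\) cannot be absorbed by a polynomial in \(I_6^+(v)\), so the H\"older route only works if \(\epsilon\) can be made zero; the first option, the direct It\^o computation on the product, should give the bound with constant depending on \(a,\lambda\), which is the intended \(C_{a,\lambda}\).
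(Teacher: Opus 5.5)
Your proof of \eqref{eq:moving-endpoint-sharp} follows the paper. So does your reduction of \eqref{eq:moving-weight-increment}, via Cauchy--Schwarz in time and It\^o's isometry, to a bound on \(\mathbb E[R_s^2(1+G(Y_s))^j\mid\mathcal F_t]\), \(j=1,2\). The gap is that this bound, which you correctly identify as the crux, is never established. You reject the H\"older route because it supposedly leaves a factor \(e^{\epsilon G(v)}\). You then rely on an It\^o computation for \(R_s^2(1+\|Z_s\|_{H^2})^8\) that you only say ``should'' work, and that computation is not routine. The cross-variation between \(R_s^2\) and the polynomial functional of \(Z_s\) is of size \(R_s^2\sqrt{G(Y_s)}\,(1+\|Z_s\|_{H^2})^7\). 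The natural way to absorb it is a strictly negative coefficient of \(G\) in the drift of \(R_s^2\). The available margin \(2\theta_s(-2\lambda/7+\theta_sC_{\mathrm{qv}})\), however, can vanish for admissible parameters, for instance \(\lambda=1\), \(a=1/(7C_{\mathrm{qv}})\) and \(s=h\).

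The missing observation is that H\"older is lossless here. The moving exponent is built so that \(\theta_se^{-3\lambda s/7}=2ae^{-3\lambda h/7}\) for every \(s\in[0,h]\). The Ornstein--Uhlenbeck flow from \(v\) over time \(s\) has the same endpoint as a splitting step of length \(s\) from \(\mathcal K_{-s}v\), whose modified invariants equal those of \(v\). Hence \eqref{eq:common-endpoint-exp} at exponent \(\eta=4\theta_s\leq8a\leq\eta_*\) gives
\[
 \mathbb E[R_s^4\mid\mathcal F_t]
 \leq C_E\exp\{4\theta_se^{-3\lambda s/7}G(v)\}=C_ER_0^4 .
\]
Cauchy--Schwarz, together with \eqref{eq:common-polynomial-bound} for \(\mathbb E[(1+G(Y_s))^{2j}\mid\mathcal F_t]\), then yields \(CR_0^2P(I_6^+(v))\) directly. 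Your leftover \(e^{\epsilon G(v)}\) arises only because you replaced \(\theta_s\) by its maximum \(2a\), or compared \(e^{-3\lambda h/7}\) with \(e^{-3/7}\), instead of keeping the exact \(s\)-dependence. Likewise, \(\mathbb E[R_s^4]\) being controlled by \(R_0^4\) rather than \(R_0^2\) costs nothing, since the square root restores \(R_0^2\). With this observation, the pathwise splitting \(Y_s=e^{-\lambda s}v+Z_s\) becomes unnecessary.
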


\begin{proof}
We first take deterministic \(v\).  Applying It\^o's formula to
\(R_s^m=e^{m\theta_sG(Y_s)}\), with
\(\frac{\mathrm d\theta_s}{\mathrm ds}=3\lambda\theta_s/7\), gives
\[
 \mathrm dR_s^m
 =R_s^m\left\{
 \left(m\frac{\mathrm d\theta_s}{\mathrm ds}G(Y_s)+m\theta_s\mathfrak b(s)
       +\frac{m^2\theta_s^2}{2}q_s\right)\mathrm ds
       +m\theta_s\,\mathrm dM_s\right\}.
\]
The coefficient multiplying \(G(Y_s)\) is bounded above by
\[
 m\theta_s\left\{-\frac{2\lambda}{7}
                   +\frac{m\theta_sC_{\mathrm{qv}}}{2}\right\}.
\]
For \(m=1,2\), we have \(m\theta_s\leq4a\), and
\eqref{eq:common-long-time-weight} makes this coefficient nonpositive.
Thus the drift coefficient of \(R_s^m\) is bounded above by
\(mC_0\lambda\theta_sR_s^m\), so
\[
 \exp\left\{-mC_0\lambda\int_0^s\theta_r\,\mathrm dr\right\}R_s^m
\]
is a nonnegative local supermartingale and hence a supermartingale.
Taking expectations and using
\(\int_0^h\theta_r\,\mathrm dr\leq2ah\) proves
\eqref{eq:moving-endpoint-sharp}.

For the increment estimate, taking \(m=1\) in the It\^o formula and using
\eqref{eq:G-semimartingale-uniform-proof} gives
\[
 \mathrm dR_s=R_s\{A_s\,\mathrm ds+\theta_s\,\mathrm dM_s\},
 \qquad |A_s|\leq C_a\lambda(1+G(Y_s)).
\]
Integrating from \(0\) to \(h\), we obtain
\[
 R_h-R_0=\int_0^hR_sA_s\,\mathrm ds+\int_0^hR_s\theta_s\,\mathrm dM_s.
\]
We estimate these two integrals using bounds for
\(\mathbb E[R_s^2(1+G(Y_s))^j]\), \(j=1,2\).
The polynomial and endpoint exponential estimates of
Proposition~\ref{prop:uniform-lyapunov} also apply to the Ornstein--Uhlenbeck
flow started from \(v\).  Indeed, for any fixed \(s>0\), this flow has the
same endpoint as a splitting step of length \(s\) started from
\(\mathcal K_{-s}v\), whose modified invariants equal those of \(v\).
Since \(4\theta_s\leq8a\leq\eta_*\), these estimates, coercivity, and
H\"older's inequality give, for \(j=1,2\),
\[
 \begin{split}
 \mathbb E[R_s^2(1+G(Y_s))^j]
 &\leq
 \bigl(\mathbb E[(1+G(Y_s))^{2j}]\bigr)^{1/2}
 \bigl(\mathbb E[e^{4\theta_sG(Y_s)}]\bigr)^{1/2}\\
 &\leq Ce^{2\theta_se^{-3\lambda s/7}G(v)}
       P(I_6^+(v)).
 \end{split}
\]
Since \(\theta_se^{-3\lambda s/7}=2ae^{-3\lambda h/7}\), the exponential
factor equals \(R_0^2\).  Thus
\begin{equation}
 \mathbb E[R_s^2(1+G(Y_s))^j]
 \leq CR_0^2P(I_6^+(v)),
 \qquad j=1,2.
 \label{eq:moving-weight-mixed}
\end{equation}
Estimate \eqref{eq:moving-weight-mixed}, together with the bounds on
\(A_s\) and \(q_s\), gives the integrability required for Cauchy--Schwarz
and It\^o's isometry.  Hence
\[
 \begin{split}
 \mathbb E[|R_h-R_0|^2]
 &\leq2h\int_0^h\mathbb E[R_s^2|A_s|^2]\,\mathrm ds
       +2\int_0^h\mathbb E[R_s^2\theta_s^2q_s]\,\mathrm ds\\
 &\leq C_a(\lambda^2h^2+h)
       R_0^2P(I_6^+(v)).
 \end{split}
\]
Since \(\lambda^2h^2+h\leq(\lambda+1)h\) for \(h\leq\lambda^{-1}\),
this proves \eqref{eq:moving-weight-increment}.  For an
\(\mathcal F_t\)-measurable initial state \(v\), independence of the future
Wiener increments gives the stated conditional estimates.
\end{proof}

To control the growth in the continuous dependence estimate, set
\begin{equation}
 \lambda_{\mathrm{uni}}=\lambda_{\mathrm{uni}}(a,\Phi)
 =\max\left\{
 1,\frac{C_{\mathrm K}e^{3/7}}{2a(3/7)},2C_{\mathrm K}\right\}.
 \label{eq:common-damping-threshold}
\end{equation}
For each fixed \(\lambda\geq\lambda_{\mathrm{uni}}\), choose
\begin{equation}
 0<h\leq h_*\leq\min\{\lambda^{-1},1\},
 \label{eq:common-step-threshold}
\end{equation}
where \(h_*=h_*(a,\lambda,\Phi)>0\) is also no larger than the step
threshold in the local error estimates.

\begin{theorem}[Uniform-in-time strong convergence]
\label{thm:uniform-strong}
Assume that \(u_0\in H^6_0\) is deterministic and
\(\Phi\in L_2(\mathfrak U,H^6_0)\).  Choose \(a\) as in
\eqref{eq:common-long-time-weight}, fix \(\lambda\geq\lambda_{\mathrm{uni}}\), and
take \(h\) as in \eqref{eq:common-step-threshold}.  Let
\(u(0)=U^0=u_0\), and drive the exact solution and the splitting
approximation by the same Wiener process.  Then
\begin{equation}
 \sup_{n\geq0}\mathbb E\left[
 e^{2aG(U^n)}\|u(nh)-U^n\|_{L^2}^2\right]\leq Ch^2.
 \label{eq:uniform-weighted-strong-one}
\end{equation}
Consequently,
\begin{equation}
 \sup_{n\geq0}\mathbb E[\|u(nh)-U^n\|_{L^2}^2]\leq Ch^2,
 \qquad 0<h\leq h_*.
 \label{eq:uniform-strong}
\end{equation}
Hence the splitting approximation has strong order \(1\) uniformly in time.
The constant may depend on \(u_0,a,\lambda,\Phi\) and the fixed Lyapunov
constants, but is independent of \(n\), the final time, and
\(h\in(0,h_*]\).
\end{theorem}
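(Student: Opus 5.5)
We propose to prove \eqref{eq:uniform-weighted-strong-one} by a one-step recursion for the weighted quantity \(\mathcal E_n=\mathbb E[e^{2aG(U^n)}\|e_n\|_{L^2}^2]\), where \(e_n=u(t_n)-U^n\). The aim is a recursion of the form \(\mathcal E_{n+1}\le(1-c\lambda h)\mathcal E_n+Ch^3\), which sums to \(\sup_n\mathcal E_n\le Ch^2\). The unweighted bound \eqref{eq:uniform-strong} then follows because \(G\ge1\), so the weight is at least one.

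\textbf{Error decomposition.} We use the decomposition \eqref{eq:finite-error-decomposition}, but now written as
\[
e_{n+1}=p_n+\delta_{n+1},\qquad p_n=\mathcal L^\omega_{t_{n+1},t_n}u(t_n)-\mathcal L^\omega_{t_{n+1},t_n}U^n .
\]
We apply Lemma~\ref{lem:one-sided-kdv-stability} with the growth factor placed on \(U^n\), i.e.\ with \(y=U^n\):
\[
\|p_n\|^2\le e^{-2\lambda h+C_{\mathrm K}h(1+G(U^n))}\|e_n\|^2 .
\]
The weight is \(R_h=e^{2aG(U^{n+1})}\), where \(U^{n+1}\) is the endpoint of an Ornstein--Uhlenbeck step started from \(v=\mathcal K_hU^n\). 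Since \(G(v)=G(U^n)\), Lemma~\ref{lem:moving-ou-weight} applies with \(R_0=e^{2ae^{-3\lambda h/7}G(U^n)}\).

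\textbf{Main term.} Because \(p_n\) is \(\mathcal F_{t_n}\)-measurable, \eqref{eq:moving-endpoint-sharp} gives
\[
\mathbb E_n[R_h\|p_n\|^2]\le e^{2aC_0\lambda h}\,e^{2ae^{-3\lambda h/7}G(U^n)}\,e^{-2\lambda h+C_{\mathrm K}h(1+G(U^n))}\|e_n\|^2 .
\]
The key point is that the weight loss absorbs the growth factor. We have
\[
2aG(1-e^{-3\lambda h/7})\ge 2a\cdot\tfrac37\lambda h\,e^{-3/7}\,G,
\]
using \(\lambda h\le1\). The threshold \eqref{eq:common-damping-threshold} ensures that this is at least \(C_{\mathrm K}hG\). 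Hence
\[
e^{2ae^{-3\lambda h/7}G}\,e^{C_{\mathrm K}hG}\le e^{2aG(U^n)} .
\]
The constant factors combine to \(\exp\{h(2aC_0\lambda+C_{\mathrm K}-2\lambda)\}\le e^{-\lambda h}\), because \(a\le1/(4C_0)\) and \(\lambda\ge 2C_{\mathrm K}\). So the main term contributes at most \(e^{-\lambda h}\mathcal E_n\).

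\textbf{Cross term.} We next treat \(2\mathbb E[R_h\langle p_n,\delta_{n+1}\rangle]\). We split \(\delta_{n+1}=\bar\delta_{n+1}+\delta^\circ_{n+1}\), with the mean and centred parts taken relative to the state \(u(t_n)\).
\begin{itemize}
\item For the mean part, \(\|\bar\delta_{n+1}\|\le Ch^2P(I_6^+(u(t_n)))\). Young's inequality with parameter \(\lambda h/4\) produces \(\tfrac{\lambda h}{4}\mathcal E_n\)-type terms plus \(Ch^3\,\mathbb E[e^{2aG(U^n)}P(I_6^+(u(t_n)))]\).
\item For the centred part, \(\mathbb E_n[\delta^\circ]=0\), so we replace \(R_h\) by \(R_h-R_0\) at no cost. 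Cauchy--Schwarz with \eqref{eq:moving-weight-increment} gives
\[
\Bigl(\mathbb E_n|R_h-R_0|^2\Bigr)^{1/2}\Bigl(\mathbb E_n\|\delta^\circ\|^2\Bigr)^{1/2}\le C\,h^{1/2}R_0\,P^{1/2}\cdot h^{3/2}P^{1/2} .
\]
Multiplied by \(\|p_n\|\), this is again absorbed by Young's inequality into \(\tfrac{\lambda h}{4}(\cdot)+Ch^3(\cdot)\).
\end{itemize}

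\textbf{Quadratic term.} The term \(\mathbb E[R_h\|\delta\|^2]\) is bounded by a Hölder argument as in Lemma~\ref{lem:weighted-local}.

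\textbf{Main obstacle: mixed moments.} The source terms involve the weight \(e^{2aG(U^n)}\) multiplied by polynomials in \(I_6^+(u(t_n))\) and \(I_6^+(U^n)\), and we need these mixed moments bounded uniformly in \(n\). We would handle them with Hölder's inequality, using Corollary~\ref{cor:mixed-moments} and \eqref{eq:common-polynomial-bound} for the exact solution. Here \(4a\le\eta_*/2\) leaves room for the conjugate exponent.

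A further care point is that the error vector \(p_n\) pairs with \(\delta\), and \(\delta\) is computed at \(u(t_n)\) while the weight is evaluated at \(U^n\). This mismatch is harmless, since all local bounds depend only on \(\mathcal F_{t_n}\)-measurable quantities.

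Collecting the terms yields
\[
\mathcal E_{n+1}\le\bigl(e^{-\lambda h}+\tfrac{\lambda h}{2}\bigr)\mathcal E_n+Ch^3\le\bigl(1-\tfrac{\lambda h}{4}\bigr)\mathcal E_n+Ch^3
\]
for \(h\le h_*\). Since \(\mathcal E_0=0\), iterating gives \(\sup_n\mathcal E_n\le Ch^2/\lambda\), which proves \eqref{eq:uniform-weighted-strong-one}.
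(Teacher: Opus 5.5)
Your proposal is correct and follows essentially the same route as the paper's proof. The weight is evaluated at $U^n$ to match the one-sided KdV stability estimate, and the loss $2a(1-e^{-3\lambda h/7})G(U^n)$ in the moving Ornstein--Uhlenbeck weight absorbs the growth factor. The cross term is split as $\bar\delta_{n+1}\mathbb E_n[R_h]+\mathbb E_n[(R_h-R_0)\delta^\circ_{n+1}]$ and closed with the weight-increment bound and Young's inequality, and H\"older's inequality handles the source moments.

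The one minor point is that $e^{-\lambda h}+\lambda h/2\le 1-\lambda h/4$ needs $\lambda h$ somewhat smaller than $1$. Keeping the factor $e^{-(2\lambda-C_{\mathrm K})h}$ on the Young terms, as the paper does, gives the contraction factor $e^{-\lambda h}(1+\lambda h/2)\le e^{-\lambda h/2}$ for every $h\le\lambda^{-1}$.
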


\begin{proof}
Write \(\mathbb E_n[\,\cdot\,]=
\mathbb E[\,\cdot\mid\mathcal F_{t_n}]\), and set
\[
 x=u(t_n),\qquad y=U^n,\qquad e_n=x-y.
\]

We use \(P\) to denote polynomials in the displayed arguments with
nonnegative coefficients, independent of \(n\) and \(h\), enlarged when
necessary.

\medskip\noindent\emph{Step 1: the weighted error and the local error.}
The difference between the two splitting updates
\[
 p_n=\mathcal L^\omega_{t_{n+1},t_n}x
      -\mathcal L^\omega_{t_{n+1},t_n}y
 =e^{-\lambda h}(\mathcal K_hx-\mathcal K_hy)
\]
is \(\mathcal F_{t_n}\)-measurable, because the common stochastic
convolution cancels.  Decompose the local error as
\[
 \begin{gathered}
 \delta_{n+1}=\mathcal S^\omega_{t_{n+1},t_n}x
               -\mathcal L^\omega_{t_{n+1},t_n}x
 =\bar\delta_{n+1}+\delta_{n+1}^\circ,\\
 \bar\delta_{n+1}=\mathbb E_n[\delta_{n+1}],
 \qquad \mathbb E_n[\delta_{n+1}^\circ]=0.
 \end{gathered}
\]
Proposition~\ref{prop:local-error-moments} gives
\begin{equation}
 \begin{gathered}
 \|\bar\delta_{n+1}\|_{L^2}\leq Ch^2P(I_6^+(x)),
 \qquad
 \mathbb E_n[\|\delta_{n+1}^\circ\|_{L^2}^2]\leq Ch^3P(I_6^+(x)),\\
 \mathbb E_n[\|\delta_{n+1}\|_{L^2}^4]\leq Ch^6P(I_6^+(x)).
 \end{gathered}
 \label{eq:local-input-uniform-one}
\end{equation}
Define
\[
 Z_n=e^{2aG(U^n)}\|e_n\|_{L^2}^2,
 \qquad W_{n+1}=e^{2aG(U^{n+1})},
 \qquad z_n=e^{2ae^{-3\lambda h/7}G(y)}.
\]
Apply Lemma~\ref{lem:moving-ou-weight} with \(t=t_n\) and
\(v=\mathcal K_hy\).  By \eqref{eq:modified-invariance},
\(G(v)=G(y)\) and \(I_6^+(v)=I_6^+(y)\), so
\[
 Y_h=U^{n+1},\qquad R_h=W_{n+1},\qquad R_0=z_n.
\]
Since \(e_{n+1}=p_n+\delta_{n+1}\),
\begin{equation}
 \begin{split}
 \mathbb E_n[Z_{n+1}]
 ={}&(\mathbb E_n[W_{n+1}])\|p_n\|_{L^2}^2
 +2\langle p_n,\mathbb E_n[W_{n+1}\delta_{n+1}]\rangle\\
 &+\mathbb E_n[W_{n+1}\|\delta_{n+1}\|_{L^2}^2].
 \end{split}
 \label{eq:weighted-error-exact-expansion}
\end{equation}

\medskip\noindent\emph{Step 2: the weighted local terms.}
The endpoint weight and the zero-mean part of the local error depend on
the Wiener increments over \([t_n,t_{n+1}]\), so
\(\mathbb E_n[W_{n+1}\delta_{n+1}^\circ]\) need not vanish.
Since \(z_n\) is \(\mathcal F_{t_n}\)-measurable, we have
\[
 \mathbb E_n[W_{n+1}\delta_{n+1}]
 =\bar\delta_{n+1}\mathbb E_n[W_{n+1}]
  +\mathbb E_n[(W_{n+1}-z_n)\delta_{n+1}^\circ].
\]
For the second term, conditional Cauchy--Schwarz,
\eqref{eq:moving-weight-increment}, and
\eqref{eq:local-input-uniform-one} give
\[
 \begin{split}
 \|\mathbb E_n[(W_{n+1}-z_n)\delta_{n+1}^\circ]\|_{L^2}
 &\leq
 (\mathbb E_n[|W_{n+1}-z_n|^2])^{1/2}
 (\mathbb E_n[\|\delta_{n+1}^\circ\|_{L^2}^2])^{1/2}\\
 &\leq Ch^2z_nP(I_6^+(x),I_6^+(y)).
\end{split}
\]
Together with \eqref{eq:moving-endpoint-sharp} and
\eqref{eq:local-input-uniform-one}, this proves
\begin{equation}
 \|\mathbb E_n[W_{n+1}\delta_{n+1}]\|_{L^2}
 \leq Ch^2z_nP(I_6^+(x),I_6^+(y)).
 \label{eq:weighted-centered-cross}
\end{equation}
The case \(m=2\) of \eqref{eq:moving-endpoint-sharp} also yields
\begin{equation}
 \begin{split}
 \mathbb E_n[W_{n+1}\|\delta_{n+1}\|_{L^2}^2]
 &\leq(\mathbb E_n[W_{n+1}^2])^{1/2}
       (\mathbb E_n[\|\delta_{n+1}\|_{L^2}^4])^{1/2}\\
 &\leq Ch^3z_nP(I_6^+(x)).
 \end{split}
 \label{eq:weighted-local-square-new}
\end{equation}

\medskip\noindent\emph{Step 3: KdV growth against Ornstein--Uhlenbeck
damping.}
Here Lemma~\ref{lem:one-sided-kdv-stability} is applied with the coefficient
evaluated at \(y=U^n\), so that it matches the state in the exponential
weight.  It gives
\[
 z_n\|p_n\|_{L^2}^2
 \leq\exp\left\{-2\lambda h+C_{\mathrm K}h
 +\left[C_{\mathrm K}h-2a(1-e^{-3\lambda h/7})\right]G(y)\right\}Z_n.
\]
For \(h\leq\lambda^{-1}\),
\[
 1-e^{-3\lambda h/7}\geq\frac{3\lambda h}{7}e^{-3/7}.
\]
The choice of \(\lambda_{\mathrm{uni}}\) makes the coefficient of \(G(y)\)
nonpositive, so
\begin{equation}
 z_n\|p_n\|_{L^2}^2
 \leq e^{-(2\lambda-C_{\mathrm K})h}Z_n.
 \label{eq:z-propagation-bound}
\end{equation}
In the cross term from \eqref{eq:weighted-error-exact-expansion}, apply
Young's inequality to \(\sqrt{z_n}\|p_n\|_{L^2}\) and
\(Ch^2\sqrt{z_n}P\).  For every \(\varepsilon>0\),
\[
 Ch^2z_n\|p_n\|_{L^2}P
 \leq\varepsilon h z_n\|p_n\|_{L^2}^2
       +C_\varepsilon h^3z_nP^2.
\]
Combining this bound with \eqref{eq:moving-endpoint-sharp},
\eqref{eq:weighted-local-square-new}, and
\eqref{eq:z-propagation-bound}, and using \(z_n\leq e^{2aG(y)}\), gives
\[
 \mathbb E_n[Z_{n+1}]
 \leq\{e^{2aC_0\lambda h}+\varepsilon h\}
       e^{-(2\lambda-C_{\mathrm K})h}Z_n
       +Ch^3\mathcal V(x,y),
\]
where, after all polynomial enlargements and the square in Young's
inequality,
\[
 \mathcal V(x,y)=e^{2aG(y)}P(I_6^+(x),I_6^+(y)).
\]
Set
\begin{equation}
 \kappa_0=(2-2aC_0)\lambda-C_{\mathrm K}\geq\lambda>0.
 \label{eq:uniform-contraction-rate}
\end{equation}
The last inequality follows from \(aC_0\leq1/4\) and
\(C_{\mathrm K}\leq\lambda/2\).  Since
\[
 \begin{split}
 \{e^{2aC_0\lambda h}+\varepsilon h\}
 e^{-(2\lambda-C_{\mathrm K})h}
 &=e^{-\kappa_0h}
   (1+\varepsilon h e^{-2aC_0\lambda h})\\
 &\leq e^{-(\kappa_0-\varepsilon)h},
 \end{split}
\]
the choice \(\varepsilon=\kappa_0/2\) yields
\begin{equation}
 \mathbb E_n[Z_{n+1}]
 \leq e^{-\kappa_0h/2}Z_n+Ch^3\mathcal V(x,y).
 \label{eq:weighted-contracting-recurrence}
\end{equation}
\medskip\noindent\emph{Step 4: uniform moment bounds and summation.}
Fix an integer \(d\geq1\) bounding the degree in each variable of the
final polynomial \(P\) in \(\mathcal V\).  Since \(I_6^+\geq1\),
\(P(I_6^+(x),I_6^+(y))\leq C\bigl(I_6^+(x)\bigr)^d\bigl(I_6^+(y)\bigr)^d\).
H\"older's inequality with exponents \(2,4,4\) gives
\[
 \begin{split}
 \mathbb E[\mathcal V(u(t_n),U^n)]
 &\leq C\bigl(\mathbb E\left[\bigl(I_6^+(u(t_n))\bigr)^{2d}\right]\bigr)^{1/2}
       \bigl(\mathbb E\left[\bigl(I_6^+(U^n)\bigr)^{4d}\right]\bigr)^{1/4}\\
 &\qquad\times\bigl(\mathbb E[e^{8aG(U^n)}]\bigr)^{1/4}.
 \end{split}
\]
Proposition~\ref{prop:uniform-lyapunov} bounds all three factors uniformly in
the time index and step size.  This specifies the required polynomial moment
orders, while the largest exponential parameter is \(8a\leq\eta_*\).  Hence
\begin{equation}
 \sup_{0<h\leq h_*}\sup_{n\geq0}
 \mathbb E[\mathcal V(u(t_n),U^n)]<\infty.
 \label{eq:uniform-recurrence-source}
\end{equation}
As \(Z_0=0\), expectation and iteration in
\eqref{eq:weighted-contracting-recurrence} now give
\[
 \sup_{n\geq0}\mathbb E[Z_n]
 \leq\frac{Ch^3}{1-e^{-\kappa_0h/2}}
 \leq Ch^2.
\]
This proves \eqref{eq:uniform-weighted-strong-one}.  Since the weight is
at least one, \eqref{eq:uniform-strong} follows.
\end{proof}

The mean-square error estimate with an exponential weight also controls
the distance-like function used in Section~\ref{sec:weak-invariant}.
Cauchy--Schwarz and the exact endpoint exponential estimate give
\begin{equation}
 \begin{split}
 \sup_{n\geq0}\mathbb E[d_a(u(nh),U^n)]
 &\leq\sup_{n\geq0}
 \left(\mathbb E\left[e^{2aG(U^n)}
       \|u(nh)-U^n\|_{L^2}^2\right]\right)^{1/2}
 \left(\mathbb E[e^{2aG(u(nh))}]\right)^{1/2}\\
 &\leq Ch.
 \end{split}
 \label{eq:uniform-weighted-cost-one}
\end{equation}

\begin{remark}[Random initial data in Theorem~\ref{thm:uniform-strong}]
\label{rem:uniform-strong-random-data}
The same estimates hold when \(u(0)=U^0=u_0\) almost surely, where
\(u_0\) is \(H^6_0\)-valued and \(\mathcal F_0\)-measurable, and
satisfies
\[
 \mathbb E\left[\bigl(I_6^+(u_0)\bigr)^{4d}\right]<\infty,
 \qquad \mathbb E[e^{8aG(u_0)}]<\infty.
\]
Here \(d\geq1\) bounds the degree in each variable of the polynomial \(P\)
in Step~4.  Future Wiener increments are independent of \(\mathcal F_0\).
H\"older's inequality and the conditional estimates of
Proposition~\ref{prop:uniform-lyapunov} yield
\eqref{eq:uniform-recurrence-source}.  The constants depend on the initial
law only through upper bounds for these two moments.
\end{remark}

\section{Weak approximation and invariant measures}
\label{sec:weak-invariant}

In this section, we use \eqref{eq:uniform-weighted-cost-one} to estimate
the weak error for observables that are Lipschitz with respect to \(d_a\).
We then combine this estimate with contraction of the exact Markov
semigroup to compare invariant measures.

Throughout this section, assume \(\Phi\in L_2(\mathfrak U,H^6_0)\).
We work on \(H^2_0\), choose \(a\) as in
\eqref{eq:common-long-time-weight}, and take
\(\lambda\geq\lambda_{\mathrm{uni}}\) and \(0<h\leq h_*\), with
\(h_*\) as in \eqref{eq:common-step-threshold}.

\subsection{Weighted observables}

For \(\varphi:H^2_0\to\mathbb R\), define
\begin{equation}
 [\varphi]_{d_a}=
 \sup_{x\neq y}\frac{|\varphi(x)-\varphi(y)|}{d_a(x,y)}.
 \label{eq:weighted-lipschitz-seminorm}
\end{equation}
Let \([\varphi]_{d_a}<\infty\).  Subtracting \(\varphi(0)\) does not change
this seminorm, so we may assume \(\varphi(0)=0\).  By
\eqref{eq:I2-coercive},
\[
 |\varphi(x)|\leq[\varphi]_{d_a}d_a(x,0)
 \leq C_a[\varphi]_{d_a}e^{2aG(x)}.
\]
Let \(\nu\) be a probability measure with \(\nu(H^6_0)=1\) and
\(\int e^{2aG(x)}\,\nu(\mathrm dx)<\infty\).
The endpoint exponential estimate in Proposition~\ref{prop:uniform-lyapunov}
gives
\begin{equation}
 \begin{split}
 &\sup_{t\geq0}\int e^{2aG(x)}\,(\nu P_t)(\mathrm dx)
 +\sup_{0<h\leq h_*}\sup_{n\geq0}
       \int e^{2aG(x)}\,(\nu Q_h^n)(\mathrm dx)\\
 &\qquad\leq2C_E\int e^{2aG(x)}\,\nu(\mathrm dx)<\infty.
 \end{split}
 \label{eq:weak-observable-integrability}
\end{equation}
Thus \(\varphi\) is integrable under \(\nu P_t\) and \(\nu Q_h^n\)
for every \(t\geq0\), \(n\geq0\), and \(0<h\leq h_*\).

\subsection{Weak error bounds uniform in time}

Estimate \eqref{eq:uniform-weighted-cost-one} gives a first-order weak
error bound that is uniform in time.
For probability measures with finite second \(L^2\)-moments, regarded as
measures on \(L^2_0\), write
\begin{equation}
 \mathcal W_2^{L^2}(\nu_1,\nu_2)
 =\inf_{\pi\in\Gamma(\nu_1,\nu_2)}
   \left(\int\|x-y\|_{L^2}^2\,\pi(\mathrm dx,\mathrm dy)\right)^{1/2},
 \label{eq:L2-Wasserstein-two}
\end{equation}
where \(\Gamma(\nu_1,\nu_2)\) is the set of their couplings.

\begin{theorem}[Uniform-in-time weighted weak error]
\label{thm:uniform-weak}
Assume \(\Phi\in L_2(\mathfrak U,H^6_0)\), choose \(a\) as in
\eqref{eq:common-long-time-weight}, and fix
\(\lambda\geq\lambda_{\mathrm{uni}}\).
Let \(h_*\) be as in \eqref{eq:common-step-threshold}.
As in Step~4 of the proof of Theorem~\ref{thm:uniform-strong}, choose an
integer \(d\geq1\) such that the polynomial \(P\) has degree at most \(d\)
in each variable.
Let \((\nu_h)_{0<h\leq h_*}\) be probability measures on \(H^6_0\)
satisfying
\begin{equation}
 \sup_{0<h\leq h_*}
 \int_{H^6_0}\{\bigl(I_6^+(x)\bigr)^{4d}+e^{8aG(x)}\}\,\nu_h(\mathrm dx)<\infty.
 \label{eq:weak-initial-order-one}
\end{equation}
Then, for every observable \(\varphi\) with \([\varphi]_{d_a}<\infty\),
\begin{equation}
 \sup_{n\geq0}|\nu_hQ_h^n\varphi-\nu_hP_{nh}\varphi|
 \leq C[\varphi]_{d_a}h,
 \qquad 0<h\leq h_*.
 \label{eq:uniform-weak}
\end{equation}
The same laws also satisfy
\begin{equation}
 \sup_{n\geq0}\mathcal W_2^{L^2}(\nu_hQ_h^n,\nu_hP_{nh})
 \leq Ch.
 \label{eq:uniform-transient-W2}
\end{equation}
For fixed \(a,\lambda,\Phi\), the constants may depend on the uniform
moment bound in \eqref{eq:weak-initial-order-one}, but are independent of
\(n\), the final time, and \(h\in(0,h_*]\).
\end{theorem}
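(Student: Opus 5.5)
The plan is to couple the two laws with the synchronous coupling of Theorem~\ref{thm:uniform-strong}, with a random initial state. On the given filtered probability space, take an \(\mathcal F_0\)-measurable random variable \(u_0\) with law \(\nu_h\), independent of \(W\). Start both the exact solution and the splitting approximation from \(u(0)=U^0=u_0\), and drive both by the same Wiener process. By the Markov property of the exact dynamics and Proposition~\ref{prop:basic-structure}, \(u(nh)\) has law \(\nu_hP_{nh}\) and \(U^n\) has law \(\nu_hQ_h^n\). Hence
\[
 \nu_hQ_h^n\varphi-\nu_hP_{nh}\varphi=\mathbb E[\varphi(U^n)-\varphi(u(nh))].
\]
Both expectations are finite by \eqref{eq:weak-observable-integrability}, because the moment assumption \eqref{eq:weak-initial-order-one} contains \(\int e^{8aG}\,\mathrm d\nu_h<\infty\).

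For the weak bound \eqref{eq:uniform-weak}, use the definition \eqref{eq:weighted-lipschitz-seminorm} to get \(|\varphi(U^n)-\varphi(u(nh))|\le[\varphi]_{d_a}d_a(u(nh),U^n)\), so it suffices to show \(\sup_n\mathbb E[d_a(u(nh),U^n)]\le Ch\). This is \eqref{eq:uniform-weighted-cost-one}, but we need it with random initial data. By Remark~\ref{rem:uniform-strong-random-data}, the weighted estimate \eqref{eq:uniform-weighted-strong-one} holds for \(\mathcal F_0\)-measurable data with \(\mathbb E[(I_6^+(u_0))^{4d}]<\infty\) and \(\mathbb E[e^{8aG(u_0)}]<\infty\). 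Those are exactly the moments bounded uniformly in \(h\) by \eqref{eq:weak-initial-order-one}, so the constant is uniform in \(h\in(0,h_*]\). We then apply Cauchy--Schwarz as in \eqref{eq:uniform-weighted-cost-one}. The remaining factor \(\mathbb E[e^{2aG(u(nh))}]\) is bounded by taking expectations in \eqref{eq:common-endpoint-exp} with \(\eta=2a\le\eta_*\), using \(e^{-3\lambda T/7}\le1\), by \(C_E\int e^{2aG}\,\mathrm d\nu_h\). This is again uniform in \(n\) and \(h\).

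For the Wasserstein bound \eqref{eq:uniform-transient-W2}, the joint law of \((U^n,u(nh))\) is a coupling of \(\nu_hQ_h^n\) and \(\nu_hP_{nh}\). Both marginals have finite second \(L^2\)-moments by Proposition~\ref{prop:l2-balance} and its exact analogue, or more simply because \(\|x\|_{L^2}^2\le CG(x)\le C_ae^{2aG(x)}\). Therefore
\[
 \mathcal W_2^{L^2}(\nu_hQ_h^n,\nu_hP_{nh})^2\le\mathbb E\|u(nh)-U^n\|_{L^2}^2\le\mathbb E[e^{2aG(U^n)}\|u(nh)-U^n\|_{L^2}^2]\le Ch^2,
\]
where the last two steps use that the weight is at least one and then \eqref{eq:uniform-weighted-strong-one} with random data. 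Taking square roots and the supremum over \(n\) finishes the proof.

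The main obstacle is the extension to random initial data with constants depending only on the moment bound. Concretely, the Step~4 source bound \eqref{eq:uniform-recurrence-source} must hold under the law \(\nu_h\). To get it, condition on \(\mathcal F_0\) in Proposition~\ref{prop:uniform-lyapunov} using \eqref{eq:common-polynomial-bound} and \eqref{eq:common-endpoint-exp}, with exponents \(4d\) and \(8a\). Then take expectations, which leaves bounds by \(\int((I_6^+)^{4d}+e^{8aG})\,\mathrm d\nu_h\) plus constants. The conditional local estimates of Proposition~\ref{prop:local-error-moments} and Lemma~\ref{lem:moving-ou-weight} are already stated for \(\mathcal F_{t_n}\)-measurable states, so the recursion \eqref{eq:weighted-contracting-recurrence} carries over verbatim.
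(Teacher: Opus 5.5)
Your proposal is correct and follows the paper's proof essentially step for step. Both use the synchronous coupling from a common $\mathcal F_0$-measurable initial state of law $\nu_h$, the random-data weighted strong estimate of Remark~\ref{rem:uniform-strong-random-data}, Cauchy--Schwarz against the endpoint exponential bound to get $\sup_n\mathbb E[d_a(u(nh),U^n)]\le Ch$, integration of the $d_a$-Lipschitz bound, and the same coupling (weight at least one) for the $\mathcal W_2^{L^2}$ estimate; your remarks on the marginal laws and on finite second moments only make explicit what the paper leaves implicit.
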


\begin{proof}
Take a common initial state of law \(\nu_h\), independently of the future
Wiener increments, and drive the exact solution and the splitting
approximation by the same Wiener process.  Under
\eqref{eq:weak-initial-order-one}, Remark~\ref{rem:uniform-strong-random-data}
gives
\begin{equation}
 \sup_{n\geq0}
 \mathbb E\left[e^{2aG(U^n)}\|u(nh)-U^n\|_{L^2}^2\right]
 \leq Ch^2,
 \qquad 0<h\leq h_*.
 \label{eq:random-initial-weighted-strong}
\end{equation}
with a constant independent of \(h\).
By the definition of \(d_a\) and the Cauchy--Schwarz inequality,
\[
 \begin{aligned}
 \mathbb E[d_a(u(nh),U^n)]
 &=\mathbb E\left[e^{aG(u(nh))}e^{aG(U^n)}
       \|u(nh)-U^n\|_{L^2}\right]\\
 &\leq\left(\mathbb E[e^{2aG(u(nh))}]\right)^{1/2}
       \left(\mathbb E\left[e^{2aG(U^n)}
       \|u(nh)-U^n\|_{L^2}^2\right]\right)^{1/2}.
 \end{aligned}
\]
The first factor is uniformly bounded by
\eqref{eq:weak-observable-integrability} and
\eqref{eq:weak-initial-order-one}, while
\eqref{eq:random-initial-weighted-strong} bounds the second by \(Ch\).
Hence
\begin{equation}
 \sup_{n\geq0}\mathbb E[d_a(u(nh),U^n)]\leq Ch.
 \label{eq:random-initial-weighted-cost}
\end{equation}
Condition~\eqref{eq:weak-initial-order-one} and
\eqref{eq:weak-observable-integrability} ensure that \(\varphi\) is integrable
under both marginal laws.  Integrating the defining Lipschitz bound against
this coupling proves \eqref{eq:uniform-weak}.
Since the exponential weight is at least one,
\eqref{eq:random-initial-weighted-strong} also bounds the mean-square error.
The same coupling then gives \eqref{eq:uniform-transient-W2}.
\end{proof}

\subsection{Approximation of invariant measures}

We apply the uniform weak estimate with the invariant measure of the
splitting approximation as the initial law.  The comparison with the exact invariant measure also
uses contraction of the exact Markov semigroup.

For probability measures \(\nu_1,\nu_2\) satisfying
\[
 \int_{H^2_0}d_a(x,0)\,\nu_i(\mathrm dx)<\infty,
 \qquad i=1,2,
\]
define the weighted dual distance
\begin{equation}
 \mathcal D_{d_a}(\nu_1,\nu_2)
 =\sup_{\substack{[\varphi]_{d_a}\leq1\\\varphi(0)=0}}
 |\nu_1\varphi-\nu_2\varphi|.
 \label{eq:dual-distance}
\end{equation}
The triangle inequality for \(\mathcal D_{d_a}\) follows directly from
\eqref{eq:dual-distance}.

We recall from \cite[Theorem~7.1, Lemma~7.3, and (7.32)]{GHMR2024} the
contraction and exponential moment estimates for the exact dynamics,
together with the existence, uniqueness, and exponential integrability of
its invariant measure.
The proof of Theorem~7.1 gives an exponent
\(a_{\mathrm{con}}=a_{\mathrm{con}}(\Phi)>0\), which we fix small enough
that the recalled invariant-measure moment bound is valid for every
exponent in \([0,2a_{\mathrm{con}}]\).
For every \(0<a\leq a_{\mathrm{con}}\), there exists
\(\lambda_{\mathrm w}(a,\Phi)\geq1\) such that, for all
\(\lambda\geq\lambda_{\mathrm w}(a,\Phi)\), \(x,y\in H^2_0\), and \(t\geq0\),
the transition probabilities \(P_t(x,\cdot)\) and \(P_t(y,\cdot)\)
admit a coupling \(\pi^{x,y}_t\) with
\[
 \int d_a(\xi,\zeta)\,\pi^{x,y}_t(\mathrm d\xi,\mathrm d\zeta)
 \leq2e^{-\lambda t/2}d_a(x,y).
\]
Throughout this subsection, assume in addition \(a\leq a_{\mathrm{con}}\)
and \(\lambda\geq\lambda_{\mathrm w}(a,\Phi)\).
For \([\varphi]_{d_a}<\infty\), the recalled solution moment bound gives
\(P_t|\varphi|(x)<\infty\) for \(x\in H^2_0\) and \(t\geq0\).
Integrating the bound defining \([\varphi]_{d_a}\) against this coupling gives
\[
 |P_t\varphi(x)-P_t\varphi(y)|
 \leq[\varphi]_{d_a}\int d_a(\xi,\zeta)\,\pi^{x,y}_t(\mathrm d\xi,\mathrm d\zeta).
\]
Using the recalled contraction estimate, dividing by \(d_a(x,y)\), and
taking the supremum over \(x\ne y\) yields
\begin{equation}
 [P_t\varphi]_{d_a}
 \leq2e^{-\lambda t/2}[\varphi]_{d_a},
 \qquad t\geq0.
 \label{eq:observable-contraction}
\end{equation}
For probability measures \(\nu_1,\nu_2\) as above and \(t\geq0\) such that
\[
 \int_{H^2_0}d_a(x,0)\,(\nu_iP_t)(\mathrm dx)<\infty,
 \qquad i=1,2,
\]
set \(\psi=P_t\varphi-P_t\varphi(0)\).  Then
\(\psi(0)=0\), and \eqref{eq:observable-contraction} gives
\([\psi]_{d_a}\leq2e^{-\lambda t/2}[\varphi]_{d_a}\).  For every \(\varphi\)
admissible in \eqref{eq:dual-distance},
\[
 |(\nu_1P_t-\nu_2P_t)\varphi|
 =|\nu_1\psi-\nu_2\psi|
 \leq[\psi]_{d_a}\mathcal D_{d_a}(\nu_1,\nu_2).
\]
Taking the supremum over such \(\varphi\) gives
\begin{equation}
 \mathcal D_{d_a}(\nu_1P_t,\nu_2P_t)
 \leq2e^{-\lambda t/2}\mathcal D_{d_a}(\nu_1,\nu_2).
 \label{eq:dual-distance-contraction}
\end{equation}

Under the standing assumptions of this subsection, the exact dynamics has
a unique invariant probability measure \(\mu\).  Since
\(a\leq a_{\mathrm{con}}\), the recalled invariant-measure moment bound
applies at exponent \(2a\) and gives
\[
 \int_{H^2_0}e^{2aG(x)}\,\mu(\mathrm dx)<\infty.
\]
By \eqref{eq:weighted-cost-a}, \eqref{eq:I2-coercive} and
\eqref{eq:G-definition}, \(\|x\|_{L^2}^2\leq CG(x)\) and
\(d_a(x,0)\leq C_a e^{2aG(x)}\); hence
\begin{equation}
 \int_{H^2_0}
 \bigl(d_a(x,0)+\|x\|_{L^2}^2\bigr)\,\mu(\mathrm dx)<\infty.
 \label{eq:exact-invariant-integrability}
\end{equation}

\begin{theorem}[Invariant measure approximation]
\label{thm:invariant-laws}
Assume \(\Phi\in L_2(\mathfrak U,H^6_0)\), choose \(a\) as in
\eqref{eq:common-long-time-weight} with \(a\leq a_{\mathrm{con}}\), and fix
\(\lambda\geq\max\{\lambda_{\mathrm{uni}},\lambda_{\mathrm w}(a,\Phi)\}\).
Let \(h_*\) be as in \eqref{eq:common-step-threshold}.
For every \(0<h\leq h_*\), the Feller kernel \(Q_h\) has a unique invariant
probability measure \(\mu_h\) on \(H^2_0\).  These measures satisfy
\begin{equation}
 \mu_h(H^6_0)=1,
 \qquad
 \sup_{0<h\leq h_*}\int_{H^6_0}
       \{\bigl(I_6^+(x)\bigr)^{4d}+e^{8aG(x)}\}\,\mu_h(\mathrm dx)<\infty.
 \label{eq:invariant-mixed-bound}
\end{equation}
Let \(\mu\) denote the invariant probability measure of the exact dynamics
introduced above.  Then
\begin{align}
 \mathcal D_{d_a}(\mu_h,\mu)&\leq Ch,
 \label{eq:invariant-distance}\\
 \mathcal W_2^{L^2}(\mu_h,\mu)&\leq Ch.
 \label{eq:invariant-W2}
\end{align}
The constants may depend on the fixed \(a,\lambda,\Phi\) and the Lyapunov
constants, but not on \(h\in(0,h_*]\).
\end{theorem}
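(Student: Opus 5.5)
The plan has three parts: existence of \(\mu_h\) with the moment bounds \eqref{eq:invariant-mixed-bound}, uniqueness, and then the error estimates \eqref{eq:invariant-distance}--\eqref{eq:invariant-W2}. The error estimates come from combining the uniform weak estimate of Theorem~\ref{thm:uniform-weak} with the exact contraction \eqref{eq:dual-distance-contraction}.

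\emph{Existence.} Start the chain from \(U^0=0\) and form the Krylov--Bogoliubov averages \(\bar\mu_{h,N}=N^{-1}\sum_{n<N}\delta_0Q_h^n\). By \eqref{eq:common-polynomial-bound}, \(\sup_n\mathbb E[(I_6^+(U^n))^{p}]\le C_p\) uniformly in \(h\) and \(n\), for every \(p\). Since \(H^6_0\) embeds compactly in \(H^2_0\) and \(I_6^+\) is coercive, the family \(\bar\mu_{h,N}\) is tight on \(H^2_0\). The kernel \(Q_h\) is Feller on \(H^2_0\): the proof of Proposition~\ref{prop:basic-structure} uses only continuity of \(\mathcal K_h\), which holds on \(H^2\). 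Hence every weak limit is invariant.

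\emph{Moment bounds.} The functions \((I_6^+)^{4d}\wedge M\), \(e^{8aG}\wedge M\), and the indicator of the set \(\{I_6^+\le R\}\) are lower semicontinuous on \(H^2_0\); the sublevel sets of \(I_6^+\) are closed in \(H^2\) by weak compactness in \(H^6\). Portmanteau therefore transfers the uniform bounds to the limit \(\mu_h\). The exponential part comes from \eqref{eq:common-endpoint-exp}, since \(8a\le\eta_*\). This gives \(\mu_h(H^6_0)=1\) and \eqref{eq:invariant-mixed-bound}, with constants independent of \(h\).

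\emph{Uniqueness.} This is the delicate step, because there is no direct contraction for \(Q_h\). I would argue via the weak estimate. Let \(\nu\) be any invariant measure of \(Q_h\) on \(H^2_0\). Step one is to show that \(\nu\) also satisfies \eqref{eq:invariant-mixed-bound}. I would obtain this by running \(\nu Q_h^n\) and using \eqref{eq:common-endpoint-exp}, which loses the initial dependence as \(n\to\infty\), together with the smoothing-free bound \(e^{-\lambda pT}\). The difficulty is that \(\nu\) is only known to live on \(H^2_0\), so \(I_6^+\) might be infinite. To handle this, I would first use the \(G\)-bound \eqref{eq:common-endpoint-exp}, which is valid on \(H^2\), to get \(\int e^{8aG}\,d\nu<\infty\) by a truncation/Fatou argument. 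Then I would note that \(\nu=\nu Q_h^n\) and that \(Q_h\) maps into states whose \(H^6\) part comes only from \(\mathcal K_h\) and the \(H^6\)-valued convolution. If this regularity argument fails, I would fall back to an argument through the exact contraction applied directly to the chain. For two invariant \(\nu_1,\nu_2\) with the moment bounds, the triangle inequality gives
\[
 \mathcal D_{d_a}(\nu_1,\nu_2)=\mathcal D_{d_a}(\nu_1Q_h^n,\nu_2Q_h^n)\le \sum_{i}\mathcal D_{d_a}(\nu_iQ_h^n,\nu_iP_{nh})+2e^{-\lambda nh/2}\mathcal D_{d_a}(\nu_1,\nu_2).
\]
Taking \(nh\) large gives \(\mathcal D_{d_a}(\nu_1,\nu_2)\le Ch\). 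This alone does not show \(\nu_1=\nu_2\). I would therefore instead pass through a one-step contraction for \(Q_h\), obtained by writing \(Q_h^n\) as a perturbation of \(P_{nh}\) applied to Dirac masses, and using \(d_a\)-Lipschitz observables that separate measures.

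\emph{Error bounds.} Take \(\nu_h=\mu_h\) in Theorem~\ref{thm:uniform-weak}; this is allowed by \eqref{eq:invariant-mixed-bound}. Then
\[
 \mathcal D_{d_a}(\mu_h,\mu)\le\mathcal D_{d_a}(\mu_hQ_h^n,\mu_hP_{nh})+\mathcal D_{d_a}(\mu_hP_{nh},\mu P_{nh})\le Ch+2e^{-\lambda nh/2}\mathcal D_{d_a}(\mu_h,\mu).
\]
Here \(\mathcal D_{d_a}(\mu_h,\mu)\) is finite by \eqref{eq:exact-invariant-integrability} and \eqref{invariant-mixed-bound}. Choosing \(n\) with \(2e^{-\lambda nh/2}\le1/2\) gives \eqref{eq:invariant-distance}.

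For \eqref{eq:invariant-W2}, the map \(\nu\mapsto\nu P_t\) is not directly a \(\mathcal W_2\) contraction, so I would work at the level of couplings. Couple \(\mu_h\) with \(\mu\) optimally in \(\mathcal W_2^{L^2}\). Then evolve, on the first leg, the exact and splitting chains with a shared noise from \(\mu_h\); this leg contributes \(Ch\) by \eqref{eq:uniform-transient-W2}. On the second leg, couple \(\mu_hP_t\) and \(\mu P_t=\mu\) by gluing the kernels \(\pi^{x,y}_t\). To pass from \(d_a\) to the squared \(L^2\) norm, I would bound \(\|\xi-\zeta\|^2\le\|\xi-\zeta\|\,(\|\xi\|+\|\zeta\|)\le C\,d_a(\xi,\zeta)\), using the exponential weight to absorb the \(L^2\) norms. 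This yields \(\mathcal W_2^{L^2}(\mu_hP_t,\mu)^2\le Ce^{-\lambda t/2}\,\mathbb E[d_a(x,y)]\) under the coupling. The right-hand side vanishes as \(t\to\infty\), so the triangle inequality gives \(\mathcal W_2^{L^2}(\mu_h,\mu)\le Ch+o(1)\), which is \(Ch\).
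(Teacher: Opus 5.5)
\textbf{Gap: uniqueness of the invariant measure of \(Q_h\).} Your existence argument, the moment bounds, and the \(\mathcal D_{d_a}\) estimate match the paper. The uniqueness step, however, has a genuine gap, and none of your three routes closes it.

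First, you cannot show by a regularity argument that an arbitrary invariant \(\nu\) on \(H^2_0\) satisfies \(\nu(H^6_0)=1\). The KdV flow is a homeomorphism of each \(H^s\) with no smoothing, and the stochastic convolution is \(H^6\)-valued, so \(Q_h\) maps \(H^2_0\setminus H^6_0\) into itself. Also, \eqref{eq:common-polynomial-bound} is vacuous when \(I_6^+(U^0)=\infty\). Without these moments, Theorem~\ref{thm:uniform-weak} is unavailable for \(\nu\).

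Second, as you note yourself, the triangle-inequality argument gives only \(\mathcal D_{d_a}(\nu_1,\nu_2)\leq Ch\).

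Third, writing \(Q_h^n\) as a perturbation of \(P_{nh}\) cannot give a contraction for \(Q_h^n\). The weak error is \(O(h)\) uniformly in \(n\) and does not decay, so you only get \(|Q_h^n\varphi(x)-Q_h^n\varphi(y)|\leq2e^{-\lambda nh/2}[\varphi]_{d_a}d_a(x,y)+O(h)\), which does not tend to zero.

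\textbf{The missing idea.} Contrary to your remark, \(Q_h\) itself contracts under synchronous coupling, using exactly the ingredients of Step~3 in the proof of Theorem~\ref{thm:uniform-strong}. For \(x,y\in H^6_0\) driven by the same convolution, \(\mathcal L^\omega_{h,0}x-\mathcal L^\omega_{h,0}y=e^{-\lambda h}(\mathcal K_hx-\mathcal K_hy)\) is deterministic. Combine three facts: Lemma~\ref{lem:one-sided-kdv-stability} with the growth factor placed at \(y\); \eqref{eq:moving-endpoint-sharp} with \(m=1\), where the weight is at the \(y\)-trajectory; and the choice \eqref{eq:common-damping-threshold}. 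Together they give
\[
 \mathbb E\bigl[e^{2aG(\mathcal L^\omega_{h,0}y)}\|\mathcal L^\omega_{h,0}x-\mathcal L^\omega_{h,0}y\|_{L^2}^2\bigr]
 \leq e^{2aC_0\lambda h}e^{2ae^{-3\lambda h/7}G(y)}e^{-2\lambda h}e^{C_{\mathrm K}h(1+G(y))}\|x-y\|_{L^2}^2
 \leq e^{-\kappa_0h}e^{2aG(y)}\|x-y\|_{L^2}^2 .
\]
Conditional iteration yields \(\mathbb E[e^{2aG(U^{n,y})}\|U^{n,x}-U^{n,y}\|_{L^2}^2]\leq e^{-\kappa_0nh}e^{2aG(y)}\|x-y\|_{L^2}^2\). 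This extends to all \(x,y\in H^2_0\) by density, a.s.\ continuity of \(x\mapsto U^{n,x}\) and of \(G\) on \(H^2_0\), and Fatou's lemma.

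For bounded \(L^2\)-Lipschitz \(\varphi\), it follows that \(Q_h^n\varphi(x)-Q_h^n\varphi(y)\to0\) pointwise. Dominated convergence against \(\rho_h\otimes\mu_h\), with bound \(2\|\varphi\|_\infty\), then gives \(\rho_h=\mu_h\) for any invariant \(\rho_h\). This requires no moment assumption on \(\rho_h\), which is exactly what your approach lacked.

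\textbf{On the \(\mathcal W_2^{L^2}\) step.} Gluing the exact couplings \(\pi^{x,y}_t\) and using \(\|\xi-\zeta\|_{L^2}^2\leq C_ad_a(\xi,\zeta)\) is a valid alternative to the paper's argument, which uses weak convergence of \(\mu_hP_{nh}\) to \(\mu\) together with lower semicontinuity of \(\mathcal W_2^{L^2}\). Two conditions are needed. You must choose \((x,y)\mapsto\pi^{x,y}_t\) measurably, for instance by measurable selection of optimal couplings for the continuous cost \(d_a\). You must also start from a coupling of \(\mu_h\) and \(\mu\) with finite \(d_a\)-cost, such as the product coupling, rather than an arbitrary \(\mathcal W_2\)-optimal one.
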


\begin{proof}
\medskip\noindent\emph{Step 1: existence and uniform moments of
\(\mu_h\).}
Start the splitting approximation from \(0\in H^6_0\).
Proposition~\ref{prop:uniform-lyapunov} gives uniform \(H^6\) moment bounds.
Since \(H^6_0\) is compactly embedded in \(H^2_0\), the averaged laws
\[
 \frac1N\sum_{n=0}^{N-1}\delta_0Q_h^n
\]
are tight on \(H^2_0\).  The Feller property and the Krylov--Bogolyubov
argument yield an invariant probability measure \(\mu_h\).
Choose \(q\geq8d\bar q_6\).  The upper bound in
\eqref{eq:I6-coercive} gives
\(\bigl(I_6^+(x)\bigr)^{4d}\leq C(1+\|x\|_{H^6}^q)\).
Define on \(H^2_0\)
\begin{equation}
 \widehat{\mathcal V}(x)=
 \begin{cases}
 1+\|x\|_{H^6}^q,&x\in H^6_0,\\
 +\infty,&x\in H^2_0\setminus H^6_0.
 \end{cases}
 \label{eq:extended-invariant-weight}
\end{equation}
The functional \(\widehat{\mathcal V}\) is lower semicontinuous on \(H^2_0\).
Indeed, suppose that \(x_n\to x\) in \(H^2_0\) and
\(\liminf_n\widehat{\mathcal V}(x_n)<\infty\).
Choose a subsequence along which \(\widehat{\mathcal V}(x_n)\) converges
to this lower limit.  It is bounded in \(H^6_0\) and therefore has a
further subsequence converging weakly in \(H^6_0\).
The limit is \(x\), by convergence in \(H^2_0\).
Weak lower semicontinuity of the \(H^6\) norm then gives
\[
 \widehat{\mathcal V}(x)
 \leq\liminf_{n\to\infty}\widehat{\mathcal V}(x_n).
\]
Proposition~\ref{prop:uniform-lyapunov} bounds its integrals against the
averaged laws uniformly in \(N\) and \(h\).  The endpoint estimate
\eqref{eq:common-endpoint-exp}, used at exponent \(8a\leq\eta_*\), also
bounds their integrals of \(e^{8aG}\), which is continuous on \(H^2_0\).
The Portmanteau theorem, applied along the subsequence defining \(\mu_h\),
bounds both integrals uniformly in \(h\).
Since \(\widehat{\mathcal V}=+\infty\) outside \(H^6_0\), this gives
\(\mu_h(H^6_0)=1\) and the moment bounds in
\eqref{eq:invariant-mixed-bound}.

\medskip\noindent\emph{Step 2: uniqueness of the invariant measure.}
We first establish a weighted mean-square contraction estimate in the
\(L^2\) norm for splitting trajectories with initial states in \(H^2_0\).
For \(x,y\in H^6_0\), use the same stochastic
convolution in both splitting updates.  Lemmas~\ref{lem:moving-ou-weight} and
\ref{lem:one-sided-kdv-stability}, together with the parameter choice in
\eqref{eq:common-damping-threshold}, give
\[
 \begin{split}
 &\mathbb E\left[
 e^{2aG(\mathcal L^\omega_{h,0}y)}
 \|\mathcal L^\omega_{h,0}x-\mathcal L^\omega_{h,0}y\|_{L^2}^2\right]\\
 &\quad\leq
 e^{2aC_0\lambda h}e^{2ae^{-3\lambda h/7}G(y)}e^{-2\lambda h}
 \|\mathcal K_hx-\mathcal K_hy\|_{L^2}^2\\
 &\quad\leq e^{-\kappa_0h}e^{2aG(y)}\|x-y\|_{L^2}^2,
 \end{split}
\]
where \(\kappa_0>0\) is defined in
\eqref{eq:uniform-contraction-rate}.  Write \(U^{n,x}\) and \(U^{n,y}\)
for the two splitting trajectories.  Conditional iteration yields
\begin{equation}
 \mathbb E\left[e^{2aG(U^{n,y})}
                    \|U^{n,x}-U^{n,y}\|_{L^2}^2\right]
 \leq e^{-\kappa_0nh}e^{2aG(y)}\|x-y\|_{L^2}^2.
 \label{eq:numerical-weighted-synchronization}
\end{equation}
For each fixed \(n\), the map \(x\mapsto U^{n,x}\) is almost surely
continuous on \(H^2_0\), since it is a finite composition of continuous
KdV maps, linear damping, and translations by \(H^2_0\)-valued stochastic
convolutions.
By the density of \(H^6_0\) in \(H^2_0\), continuity of this map
and of \(G\), and Fatou's lemma,
\eqref{eq:numerical-weighted-synchronization} holds for all \(x,y\in H^2_0\).

If \(\varphi\) is bounded and Lipschitz with respect to the \(L^2\) norm,
write \([\varphi]_{\operatorname{Lip}(L^2)}\) for its Lipschitz constant.
Estimate~\eqref{eq:numerical-weighted-synchronization} and Cauchy--Schwarz
give, for every fixed \(x,y\in H^2_0\),
\[
 |Q_h^n\varphi(x)-Q_h^n\varphi(y)|
 \leq[\varphi]_{\operatorname{Lip}(L^2)}
 e^{-\kappa_0nh/2}e^{aG(y)}\|x-y\|_{L^2}\longrightarrow0.
\]
Let \(\rho_h\) be any invariant probability measure.  Invariance gives
\[
 \rho_h\varphi-\mu_h\varphi
 =\iint[Q_h^n\varphi(x)-Q_h^n\varphi(y)]
          \,\rho_h(\mathrm dx)\mu_h(\mathrm dy).
\]
The integrand converges to zero for each \(x,y\) and is bounded by
\(2\|\varphi\|_\infty\).  Dominated convergence gives
\(\rho_h\varphi=\mu_h\varphi\).
Since bounded Lipschitz functions determine probability measures on
\(L^2_0\), the identity \(\rho_h\varphi=\mu_h\varphi\) implies that
\(\rho_h\) and \(\mu_h\) coincide as measures on \(L^2_0\).
The Borel sigma-algebra of \(H^2_0\) is the restriction of that of
\(L^2_0\), so \(\rho_h=\mu_h\) on \(H^2_0\).

\medskip\noindent\emph{Step 3: the weighted dual error.}
Equations~\eqref{eq:exact-invariant-integrability} and
\eqref{eq:invariant-mixed-bound} give the required integrability for
\(\mu\) and \(\mu_h\), respectively.  Hence
\(\mathcal D_{d_a}(\mu_h,\mu)\) is finite.

Using \(d_a(x,0)\leq C_a e^{2aG(x)}\),
\eqref{eq:invariant-mixed-bound} and
\eqref{eq:weak-observable-integrability} also give
\[
 \int d_a(x,0)\,(\mu_hP_t)(\mathrm dx)<\infty,
 \qquad t\geq0.
\]
Since \(\mu P_t=\mu\), the dual-distance contraction
\eqref{eq:dual-distance-contraction} applies to \(\mu_h\) and \(\mu\).

Set
\begin{equation}
 N_h=\left\lceil\frac{4\log2}{\lambda h}\right\rceil,
 \qquad T_h=N_hh.
 \label{eq:integer-block}
\end{equation}
Then \(2e^{-\lambda T_h/2}\leq1/2\).
By \eqref{eq:invariant-mixed-bound}, the family \((\mu_h)\) satisfies the
moment condition \eqref{eq:weak-initial-order-one}.
Applying Theorem~\ref{thm:uniform-weak} with \(\nu_h=\mu_h\) and using the
definition of \(\mathcal D_{d_a}\), we obtain
\[
 \mathcal D_{d_a}(\mu_hQ_h^{N_h},\mu_hP_{T_h})\leq Ch.
\]
Using invariance and the triangle inequality, we combine this weak error
estimate with the contraction bound \eqref{eq:dual-distance-contraction}
to obtain
\[
 \begin{split}
 \mathcal D_{d_a}(\mu_h,\mu)
 &=\mathcal D_{d_a}(\mu_hQ_h^{N_h},\mu P_{T_h})\\
 &\leq\mathcal D_{d_a}(\mu_hQ_h^{N_h},\mu_hP_{T_h})
      +\mathcal D_{d_a}(\mu_hP_{T_h},\mu P_{T_h})\\
 &\leq Ch+\frac12\mathcal D_{d_a}(\mu_h,\mu).
 \end{split}
\]
Rearrangement proves \eqref{eq:invariant-distance}.

\medskip\noindent\emph{Step 4: the \(L^2\)-Wasserstein error.}
The measures \(\mu_h\) and \(\mu\) have finite second \(L^2\)-moments by
\eqref{eq:invariant-mixed-bound} and
\eqref{eq:exact-invariant-integrability}, respectively.
For each fixed \(h\), \eqref{eq:dual-distance-contraction} and invariance of
\(\mu\) give
\[
 \mathcal D_{d_a}(\mu_hP_{nh},\mu)
 \leq2e^{-\lambda nh/2}\mathcal D_{d_a}(\mu_h,\mu)
 \longrightarrow0.
\]
Since \(d_a(x,y)\geq\|x-y\|_{L^2}\), the integrals of every bounded
function that is Lipschitz with respect to the \(L^2\) norm converge to
their integrals under \(\mu\).  This suffices to conclude that the
probability measures \(\mu_hP_{nh}\) converge weakly to \(\mu\) on
\(L^2_0\) with its norm topology.
By \eqref{eq:invariant-mixed-bound}, Theorem~\ref{thm:uniform-weak} applies
with \(\nu_h=\mu_h\).  Invariance of \(\mu_h\) under \(Q_h\) and
\eqref{eq:uniform-transient-W2} give
\[
 \sup_{n\geq0}\mathcal W_2^{L^2}(\mu_h,\mu_hP_{nh})\leq Ch.
\]
Lower semicontinuity of \(\mathcal W_2^{L^2}\) with respect to weak
convergence then yields
\[
 \mathcal W_2^{L^2}(\mu_h,\mu)
 \leq\liminf_{n\to\infty}\mathcal W_2^{L^2}(\mu_h,\mu_hP_{nh})
 \leq Ch.
\]
This proves \eqref{eq:invariant-W2}.
\end{proof}
\section{Numerical illustrations}
\label{sec:computation}

We first examine the breakdown of the classical Burgers step, then test the
discrete \(L^2\)-moment identity, and finally compare fitted convergence orders at
short and long final times for two damping values.  These experiments illustrate
the corresponding results of the analysis.

The analytical results concern the time-discrete KdV--OU splitting with
exact substeps, in which space is left continuous.  The computations use a Fourier truncation,
finite-dimensional noise, and Monte Carlo sampling; the KdV--OU experiments
also use a Runge--Kutta integrator for the KdV flow.

We represent a real-valued function by
\[
 u_K(x)=\sum_{|k|\leq K}\widehat u_k e^{\mathrm i kx},
 \qquad \widehat u_{-k}=\overline{\widehat u_k}.
\]
For the KdV--OU experiments, \(N\) denotes the number of equispaced
collocation points and \(K\) the Fourier cutoff.  We evaluate the nonlinear
term using a Fourier pseudospectral method with the \(2/3\) dealiasing rule.
We remove the linear Airy evolution by an integrating factor and apply the
classical fourth-order Runge--Kutta method to the transformed Fourier system.
Every stochastic convolution is sampled exactly.  The deterministic initial datum
in both KdV--OU experiments is
\[
 u_0(x)=0.50\sin x+0.20\cos(2x)-0.08\sin(3x).
\]
The Burgers experiment in Section~\ref{sec:numerical-burgers} starts from
zero.  The Burgers steps are computed by the method of characteristics.

\subsection{Breakdown of the classical Airy--Burgers splitting}
\label{sec:numerical-burgers}

Theorem~\ref{thm:burgers-breakdown} shows that the classical Airy--Burgers
splitting almost surely breaks down after finitely many steps.  We examine
the first breakdown step for the Airy-first Lie--Trotter splitting at a fixed
step size.  We start from zero, fix \(h=0.125\), and use
\[
 \lambda=40,
 \qquad
 \Phi\,\mathrm dW=\gamma\cos(x)\,\mathrm d\beta_1+\gamma\sin(x)\,\mathrm d\beta_2,
 \qquad
 \gamma\approx21.982.
\]
For the present single-mode noise, the quantities in the proof of
Theorem~\ref{thm:burgers-breakdown} are
\[
 q_h=\frac{\gamma^2}{2\lambda}(1-e^{-2\lambda h}),
 \qquad
 p_h=\Psi\!\left(-\frac{1}{h\sqrt{q_h}}\right).
\]
With \(Z_h\) as defined in Section~\ref{sec:burgers-obstruction}, we choose
\(\gamma\) so that the Burgers step with initial datum \(Z_h\) breaks down
with probability \(0.005\).  For this single Fourier mode,
\(-\inf_{x\in\mathbb T}\partial_xZ_h(x)\) is the Euclidean norm of two
independent \(\mathcal N(0,q_h)\) coefficients, so
\[
 \mathbb P\!\left(\inf_{x\in\mathbb T}\partial_xZ_h(x)\leq-h^{-1}\right)
 =\exp\!\left\{-\frac{1}{2h^2q_h}\right\}=0.005.
\]
Here \(q_h\approx6.04\) is the pointwise variance of \(\partial_xZ_h\), and
\(p_h\) is the probability that this derivative is at most \(-h^{-1}\) at a
fixed spatial point.  Since \(\lambda h=5\), the contribution from the preceding state is
strongly damped by the factor \(e^{-5}\), while \(q_h\) is close to the
stationary variance \(\gamma^2/(2\lambda)\).

Let \(V_n\) denote the output of the stochastic Airy--damping part of step
\(n\), and define
\[
 N_{\mathrm{bd}}
 =\inf\left\{n\geq1:\inf_{x\in\mathbb T}\partial_xV_n(x)\leq-h^{-1}\right\}.
\]
Each path is simulated until its first breakdown or until
\(N_{\max}=2000\), whichever occurs first.  The computation uses
2000 independent paths, Fourier cutoff \(K=63\), and 512 grid points for
inversion of the characteristic map, followed by Fourier projection.
To estimate the minimum spatial derivative, we first
evaluate \(\partial_xV_n\) on 1024 equispaced grid points and then refine the
location of the minimum by a local Newton iteration.

The proof of Theorem~\ref{thm:burgers-breakdown} gives
\[
 \mathbb P(N_{\mathrm{bd}}\leq N)\geq1-(1-p_h)^N.
\]
Figure~\ref{fig:burgers-failure} compares the empirical cumulative
distribution function of \(N_{\mathrm{bd}}\) with this lower bound.  The
empirical median is 134.  For comparison, if the contribution from the
preceding state is neglected, the independent stochastic convolutions give
a geometric distribution for the first breakdown step, with parameter
\(0.005\) and median \(\lceil\log(1/2)/\log(0.995)\rceil=139\).
The proportions of paths that have broken down by
\(N=500\), \(1000\), and \(2000\) are \(0.924\), \(0.9945\), and \(1\),
respectively.  A separate comparison of the Fourier cutoffs \(K=63\) and
\(K=127\) uses 400 paths, \(N_{\max}=1000\), and the same noise realizations
for both cutoffs.  The same three paths survive all 1000 steps, and each of
the remaining 397 paths has the same first breakdown step at both cutoffs.

\begin{figure}[!htbp]
 \centering
 \includegraphics[width=0.72\textwidth]{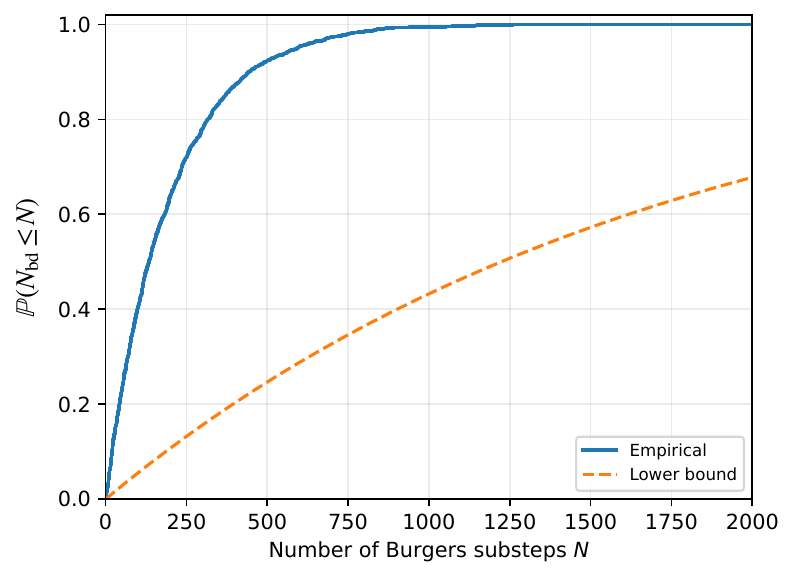}
 \caption{Empirical cumulative distribution function of \(N_{\mathrm{bd}}\)
 for \(h=0.125\), based on 2000 independent sample paths.  The dashed curve
 shows the theoretical lower bound \(1-(1-p_h)^N\).}
 \label{fig:burgers-failure}
\end{figure}

This experiment illustrates the accumulation of breakdown probability over
repeated steps at fixed \(h\).  For smaller noise amplitudes satisfying the
same derivative nondegeneracy condition, the theorem still gives eventual
breakdown, but the number of steps required may be much larger.  The recorded
event is the loss of classical solvability over the prescribed Burgers
step.  The stochastic KdV solution itself remains globally defined.

\FloatBarrier
\subsection{Discrete \texorpdfstring{\(L^2\)}{L2}-moment identity}

Proposition~\ref{prop:l2-balance} gives an exact identity for the second
moment of the splitting approximation.  We compare the second moment given
by this identity with sample means of the squared \(L^2\)-norm.

We use the finite-mode noise
\begin{equation}
 \Phi\,\mathrm dW
 =\sum_{k=1}^{8}\phi_k
 \{\mathrm dB_k e^{\mathrm i kx}+\mathrm d\overline{B_k}e^{-\mathrm i kx}\},
 \qquad
 \phi_k=0.18(1+k^2)^{-2},
 \label{eq:computational-noise}
\end{equation}
where the independent complex Brownian motions satisfy
\(\mathbb E[|\mathrm dB_k|^2]=\mathrm dt\).
For the normalization in \eqref{eq:computational-noise},
\(\|\Phi\|_{L_2(\mathfrak U,L^2)}^2=4\pi\sum_{k=1}^8\phi_k^2\).
The identity \eqref{eq:split-l2-iteration} holds for every \(\lambda>0\).
For the numerical illustration, we take \(\lambda=0.1\), \(T=25\), a fixed
splitting step size \(h=2^{-6}\), 96 collocation points with cutoff \(K=31\),
a maximum Runge--Kutta step size of \(2^{-8}\) for the KdV flow, and
\(M=200\) independent sample paths.

Figure~\ref{fig:l2-balance} compares the sample means at selected times
\(t_n\) with the curve defined by
\[
 m(t)=e^{-2\lambda t}\|u_0\|_{L^2}^2
 +\frac{1-e^{-2\lambda t}}{2\lambda}
  \|\Phi\|_{L_2(\mathfrak U,L^2)}^2.
\]

\begin{figure}[!htbp]
 \centering
 \includegraphics[width=0.72\textwidth]{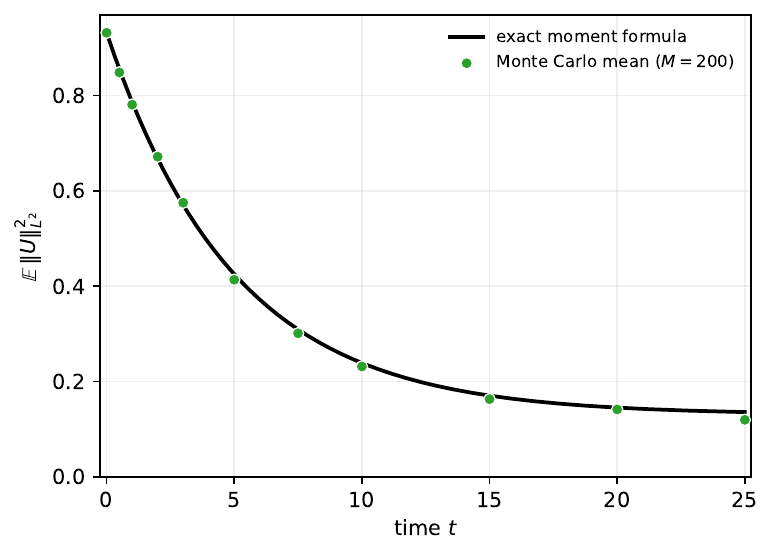}
 \caption{Discrete \(L^2\)-moment identity for \(\lambda=0.1\) and
 \(h=2^{-6}\).  The black curve is \(m(t)\), and the green markers show
 sample means of the squared \(L^2\)-norm over \(M=200\) independent paths
 up to \(T=25\).}
 \label{fig:l2-balance}
\end{figure}

At \(t=t_n\), \(m(t_n)\) equals the second moment in
\eqref{eq:split-l2-iteration}.  Here \(\|u_0\|_{L^2}^2\approx0.931168\), and
the limiting value of \(m(t)\) is
\(\|\Phi\|_{L_2(\mathfrak U,L^2)}^2/(2\lambda)\approx0.130726\).
The root-mean-square (RMS) difference between the sample means and \(m(t)\),
taken over the reported times, is about one percent of
\(m(0)-\lim_{t\to\infty}m(t)\).  The differences at positive times are of the
same order as the estimated Monte Carlo standard errors.

\FloatBarrier
\subsection{Observed convergence orders at short and long times}

The strong convergence results in Section~\ref{sec:strong-convergence}
require sufficiently large damping.  We compare \(\lambda=0.5\) and
\(\lambda=2\) to examine the effect of damping on the observed convergence
orders, keeping the initial datum, noise covariance, spatial discretization,
and step sizes fixed.  The noise is restricted to the Fourier pair
\(k=\pm1\), with
\[
 \Phi\,\mathrm dW=\phi_1
 \{\mathrm dB_1 e^{\mathrm i x}+\mathrm d\overline{B_1}e^{-\mathrm i x}\},
 \qquad \phi_1=5,
 \qquad \mathbb E[|\mathrm dB_1|^2]=\mathrm dt.
\]
We use \((N,K)=(128,42)\), the six splitting step sizes
\[
 h\in\{2^{-4},2^{-5},2^{-6},2^{-7},2^{-8},2^{-9}\},
\]
and a maximum Runge--Kutta step size of \(2^{-12}\) for the KdV flow.
The reference approximation is computed with \(h_{\mathrm{ref}}=2^{-13}\) on the
same spatial grid and with the same Runge--Kutta integrator.  For each damping
value, we use \(M=200\) independent sample paths and report results at selected
final times up to \(T=1500\).

For each damping value and Monte Carlo realization, the approximations at
all tested step sizes and the reference approximation are driven by the
same Wiener process.
Let \(\zeta_j^{\mathrm{ref}}\) denote the exact stochastic convolution over
\([jh_{\mathrm{ref}},(j+1)h_{\mathrm{ref}}]\), with \(j\geq0\).
For a tested step size \(h\), set \(N_{\mathrm c}=h/h_{\mathrm{ref}}\).
The convolution over the coarse interval \([jh,(j+1)h]\) is
\[
 \zeta^{(N_{\mathrm c})}_j=\sum_{i=0}^{N_{\mathrm c}-1}
 e^{-\lambda(N_{\mathrm c}-1-i)h_{\mathrm{ref}}}
 \zeta^{\mathrm{ref}}_{jN_{\mathrm c}+i}.
\]
Let \(U_h^{(m)}(T)\) denote the fully discrete approximation at time \(T\)
for the \(m\)-th sample path, \(m=1,\ldots,M\).
The RMS error relative to the reference approximation is
\[
 E_{\mathrm{ref}}(h,T)
 =\left(\frac1M\sum_{m=1}^M
 \|U_h^{(m)}(T)-U_{h_{\mathrm{ref}}}^{(m)}(T)\|_{L^2}^2\right)^{1/2}.
\]
For each damping value and final time \(T\), \(\hat p(T)\) is the
least-squares slope of
\(\log E_{\mathrm{ref}}(h,T)\) against \(\log h\), using all six step sizes.
The same error definition and fitting procedure are used in both
convergence figures.

Figure~\ref{fig:strong-convergence} shows the RMS errors \(E_{\mathrm{ref}}(h,T)\) for
\(\lambda=2\) at \(T=0.5\) and \(T=1000\).  The fitted orders are
\(1.004\) and \(0.987\), with coefficients of determination
\(R^2=0.9991\) and \(R^2=0.9996\),
respectively.  Both final times give fitted orders close to one.

\begin{figure}[!htbp]
 \centering
 \includegraphics[width=0.78\textwidth]{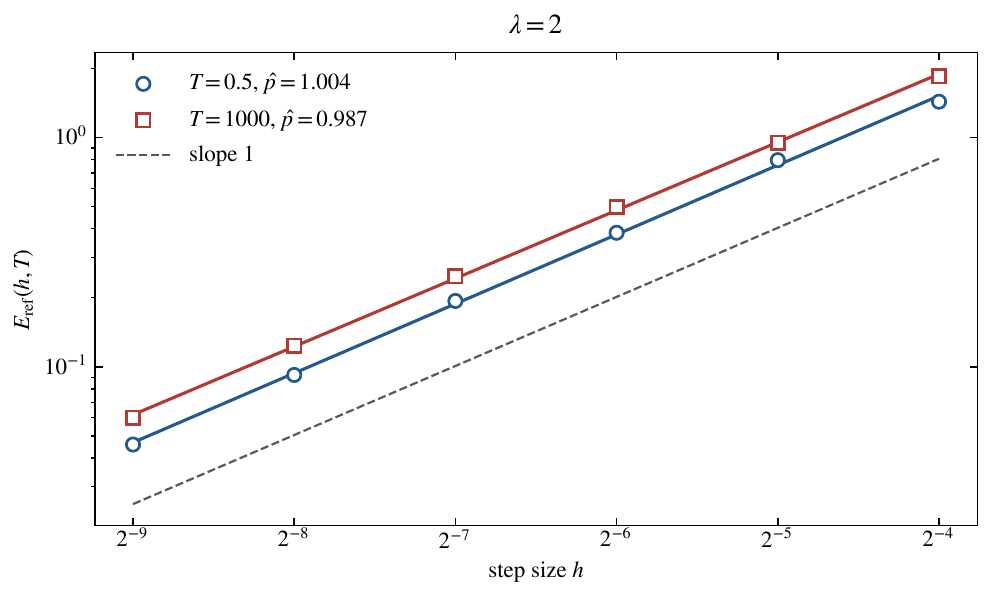}
 \caption{RMS errors \(E_{\mathrm{ref}}(h,T)\) for \(\lambda=2\) at
 \(T=0.5\) (blue circles) and \(T=1000\) (red squares), with
 \((N,K)=(128,42)\), \(M=200\), and \(h_{\mathrm{ref}}=2^{-13}\).
 The solid lines are least-squares fits over all six step sizes; the dashed
 line has slope one.}
 \label{fig:strong-convergence}
\end{figure}

Figure~\ref{fig:reference-order-vs-time} shows the fitted convergence orders
\(\hat p(T)\) at the selected final times for the two damping values.  For \(\lambda=2\), the
orders at the sampled times \(80\leq T\leq1500\) range from \(0.971\) to
\(1.008\).  For \(\lambda=0.5\), the fitted order decreases after the initial short
interval and then fluctuates.  Over the ten final times
\(T=1050,1100,\ldots,1500\), its mean is \(0.499\), with values ranging from
\(0.350\) to \(0.625\).  These are fits over a fixed range of step sizes
against a fixed reference approximation.  They do not determine the
asymptotic order as \(h\to0\).

\begin{figure}[!htbp]
 \centering
 \includegraphics[width=0.94\textwidth]{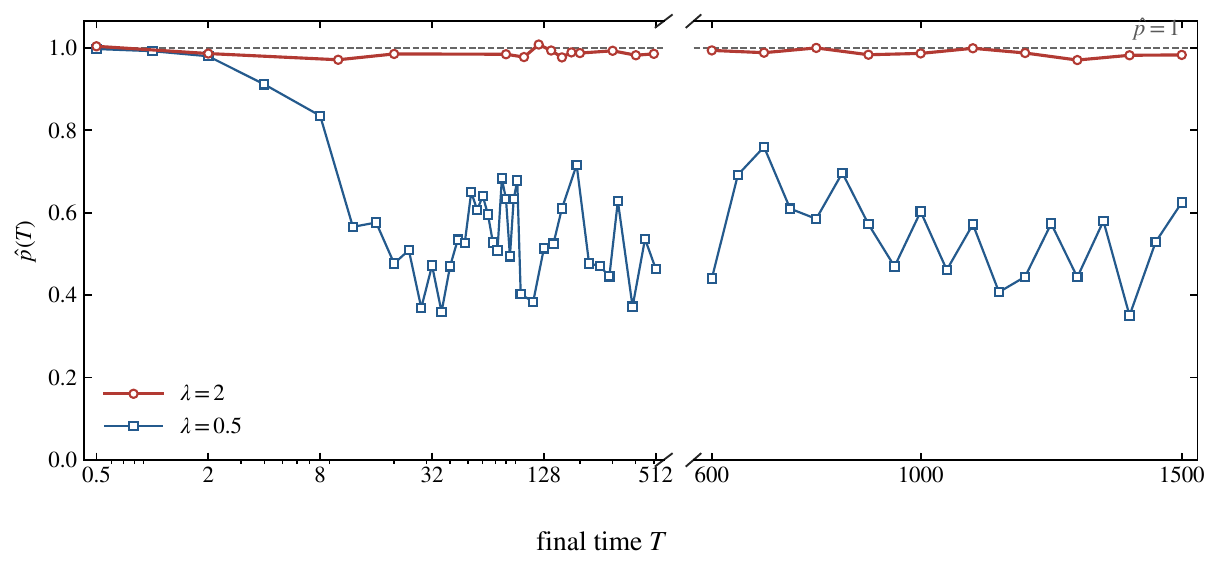}
 \caption{Fitted convergence orders \(\hat p(T)\) for \(\lambda=2\)
 (red circles) and \(\lambda=0.5\) (blue squares), up to \(T=1500\).
 Each order is fitted from the six step sizes \(h=2^{-4},\ldots,2^{-9}\),
 using \(M=200\) sample paths and \(h_{\mathrm{ref}}=2^{-13}\).
 The horizontal axis is logarithmic for \(0.5\leq T\leq512\) and linear
 for \(600\leq T\leq1500\).  The dashed line marks order one.}
 \label{fig:reference-order-vs-time}
\end{figure}

Since \(\lambda_{\mathrm{uni}}\geq1\), the value \(\lambda=0.5\) does not satisfy
the sufficient damping condition in Theorem~\ref{thm:uniform-strong}.
The constants defining \(\lambda_{\mathrm{uni}}\) and \(h_*\) in
\eqref{eq:common-damping-threshold}--\eqref{eq:common-step-threshold} have not
been evaluated numerically.  For \(\lambda=2\), the fitted orders remain
close to one over the reported interval, consistent with the uniform-in-time
first-order estimate.  This comparison illustrates the role of damping
without locating the theoretical thresholds.  The smaller fitted orders
for \(\lambda=0.5\) do not establish failure of convergence.

\FloatBarrier
\section{Conclusion}

We have studied the long-time approximation of the periodic damped stochastic
KdV equation by a KdV--Ornstein--Uhlenbeck Lie--Trotter splitting. For
sufficiently large damping, the splitting approximation converges with strong
and weak order \(1\) uniformly in time, while strong order \(1\) on finite time
intervals holds under a weaker damping condition. Under stronger assumptions,
the splitting approximation admits a unique invariant probability measure on
\(H^2_0\), which approximates the invariant measure of the exact dynamics with
order \(1\) in both a weighted dual distance and the \(2\)-Wasserstein distance
induced by the \(L^2\) norm. As a complementary result, we have shown that,
under a nondegeneracy condition on the noise, the classical Airy--Burgers
splitting almost surely breaks down after finitely many steps. The numerical
illustrations are consistent with the corresponding theoretical results over
the simulated long-time intervals. The present estimates provide an avenue for
long-time simulation of the damped sKdV equation when suitable numerical
realizations are used for the splitting substeps. Future work will extend
the analysis to fully discrete approximations.

\backmatter
\section*{Statements and Declarations}

\paragraph{Funding.}
This work was partially supported by the National Natural Science
Foundation of China (Grant Nos.~11871271, 12001283 and 12501546) and Hunan Provincial Natural Science Foundation of China (Grant No. 2026JJ60332).

\paragraph{Competing Interests.}
The authors have no competing interests to declare.

\paragraph{Data Availability.}
The data supporting the numerical experiments are available from the
corresponding author upon reasonable request.

\bibliography{references_polished}

\end{document}